\documentclass[a4paper,oneside,10pt]{amsart}
\usepackage{cite} 
\usepackage{ dsfont }

\usepackage[text={420pt,660pt},centering]{geometry}
\usepackage{comment}
\usepackage{graphicx}
\usepackage{MnSymbol}
\usepackage{mathtools} 
\usepackage{stmaryrd} 
\usepackage[colorlinks=true, pdfstartview=FitV, linkcolor=blue, citecolor=blue, urlcolor=blue,pagebackref=false]{hyperref}
\usepackage{microtype}
\usepackage{xcolor}

\usepackage{enumitem}

\usepackage{float}

\definecolor{darkgreen}{rgb}{0,0.5,0}
\definecolor{darkblue}{rgb}{0,0,0.7}
\definecolor{darkred}{rgb}{0.9,0.1,0.1}

\newcommand{\hbc} [1]{{\color{orange}{#1}}}

\newcommand{\htc} [1]{{\color{violet}{#1}}}

\newcommand{\hdccomment}[1]{\marginpar{\raggedright\scriptsize{\textcolor{red}{#1}}}}

\usepackage{amsthm}

\newtheorem{proposition}{Proposition}
\newtheorem{theorem}[proposition]{Theorem}
\newtheorem{lemma}[proposition]{Lemma}
\newtheorem{corollary}[proposition]{Corollary}

\theoremstyle{remark}
\newtheorem{remark}[proposition]{Remark}

\theoremstyle{definition}
\newtheorem{definition}[proposition]{Definition}

\numberwithin{equation}{section}
\numberwithin{proposition}{section}
\numberwithin{figure}{section}
\numberwithin{table}{section}

\newcommand{\Z}{\mathbb{Z}}
\newcommand{\N}{\mathbb{N}}

\newcommand{\R}{\mathbb{R}}

\renewcommand{\P}{\mathbb{P}}

\newcommand{\ep}{\varepsilon}
\newcommand{\eps}{\varepsilon}

\renewcommand{\le}{\leqslant}
\renewcommand{\ge}{\geqslant}
\renewcommand{\leq}{\leqslant}
\renewcommand{\geq}{\geqslant}

\renewcommand{\subset}{\subseteq}

\newcommand{\Ll}{\left}
\newcommand{\Rr}{\right}
\renewcommand{\d}{\,\mathrm{d}}

\newcommand{\1}{\mathds{1}}

\newcommand{\mcl}{\mathcal}

\DeclareMathOperator{\supp}{supp}

\newcommand{\lp}{\left(}
\newcommand{\rp}{\right)}

\newcommand{\comple}{{\mathsf{c}}}

\newcommand{\Ann}{\mathrm{Ann}}

\newcommand{\itr}{\mathrm{int}} 

\newcommand{\bc}{\mathbf{c}}

\newcommand{\lo}{{\ell}} 
\newcommand{\rz}{z}

\newcommand{\calA}{\mathcal{A}}

\newcommand{\calF}{\mathcal{F}}
\newcommand{\calG}{\mathcal{G}}
\newcommand{\calH}{\mathcal{H}}
\newcommand{\calI}{\mathcal{I}}
\newcommand{\calJ}{\mathcal{J}}

\newcommand{\calL}{\mathcal{L}}

\newcommand{\calP}{\mathcal{P}}
\newcommand{\calQ}{\mathcal{Q}}

\newcommand{\bbE}{\mathbb{E}}

\newcommand{\bbP}{\mathbb{P}}

\newcommand{\bbR}{\mathbb{R}}

\newcommand{\bbZ}{\mathbb{Z}}

\def\LE{\!{\begin{array}{c}<\\[-8pt]\frown\end{array}}\!}
\def\GE{\!{\begin{array}{c}>\\[-8pt]\frown\end{array}}\!}

\newcommand{\ges}{\GE}
\newcommand{\les}{\LE}
\newcommand{\one}{\mathds{1}}

\newcommand{\Pin}{\calP^{\mathrm{in}}}
\newcommand{\Pout}{\calP^{\mathrm{out}}}
\newcommand{\Fin}{\calF_{\mathrm{in}}}
\newcommand{\Fout}{\calF_{\mathrm{out}}}

\newcommand{\lra}{\leftrightarrow}

\newcommand{\rcE}{\phi}
\newcommand{\rcP}{\phi}
\newcommand{\lpcfg}{\calL} 
\newcommand{\dst}{d} 

\usepackage{mathtools} \mathtoolsset{showonlyrefs} 

\begin{document}


\title[Critical exponents for planar random-cluster model with~$q=4$]{Critical exponents for planar random-cluster model with cluster-weight~$q=4$}


\author{Hong-Bin Chen}
\address[Hong-Bin Chen]{New York University Shanghai}
\email{\href{mailto: hbc236@nyu.edu}{hbc236@nyu.edu}}

\author{Hugo Duminil-Copin}
\address[Hugo Duminil-Copin]{Institut des Hautes \'Etudes Scientifiques \& Universit\'e de Gen\`eve}
\email{\href{mailto: hugo.duminil@unige.ch}{hugo.duminil@unige.ch}}

\author{Tiancheng He}
\address[Tiancheng He]{Universit\'e de Gen\`eve}
\email{\href{mailto: tiancheng.he@unige.ch}{tiancheng.he@unige.ch}}

\author{Fran\c cois Jacopin}
\address[Fran\c cois Jacopin]{LPSM, Sorbonne Universit\'e}
\email{\href{mailto: jacopin@ihes.fr}{jacopin@ihes.fr} 
}

\author{Dmitry Krachun}
\address[Dmitry Krachun]{Princeton University}
\email{\href{mailto: dk9781@princeton.edu}{dk9781@princeton.edu}}

\author{Ioan Manolescu}
\address[Ioan Manolescu]{Universit\'e de Fribourg}
\email{\href{mailto: ioan.manolescu@unifr.ch}{ioan.manolescu@unifr.ch}}

\author{Jiaming Xia}
\address[Jiaming Xia]{Shanghai Institute for Mathematics and Interdisciplinary Sciences}
\email{\href{mailto: xia@simis.cn}{xia@simis.cn}}

\begin{abstract}
Using the Baxter--Kelland--Wu coupling and the convergence of the height function of the six-vertex model to the Gaussian Free Field, 
we extract critical exponents for the planar critical random-cluster model at~$q=4$, and the planar four-state Potts model.
\end{abstract}

\maketitle

%
%
%
%
%
%

\section{Introduction}

\subsection{Motivation}
Since the 1980s, several strategies have been developed in physics to predict the critical exponents of planar models undergoing continuous phase transitions. A major breakthrough came with the work of Belavin, Polyakov, and Zamolodchikov, who introduced conformal field theory~\cite{belavin1984infinite,belavin1984infiniteJSP}. This framework exploits conformal invariance to describe, with remarkable precision, the scaling limits of a wide variety of two-dimensional lattice models at criticality. The conformal invariance approach has since flourished in mathematics, notably with the advent of the Schramm--Loewner Evolution in the early 2000s \cite{schramm2000scaling,lawler8dimension,lawler2001values,lawler2001valuesII,lawler2004conformal} and the first rigorous proofs of conformal invariance by Kenyon, Smirnov, and others \cite{kenyon2000conformal,smirnov2001critical,smirnov2010conformal,chelkak2012universality,kenyon2014conformal,duminil2021conformalI}.

In parallel, physicists in the 1980s proposed another line of attack. While still motivated by conformal invariance, it takes a different form: one seeks discrete mappings between integrable lattice models and height-function models, whose large-scale behavior is governed by the Coulomb Gas' system of charged particles in the plane interacting through the two-dimensional electrostatic potential. This approach translates certain correlation functions of the scaling limit of lattice models into Gaussian Free Field (GFF) correlations. 
It was pioneered by Nienhuis for loop~$O(n)$ models~\cite{Nienhuis82}, and has since been applied to a broad class of systems.

At the heart of this program lies the six-vertex (6V) model, a cornerstone of two-dimensional statistical mechanics. As a paradigmatic integrable system, its large-scale behavior is intimately connected to both the Coulomb Gas and the Gaussian Free Field. In a recent paper~\cite{DumKozLamMan26}, the full-plane scaling limit of the six-vertex model was identified over a substantial part of its phase diagram; see also~\cite{DumSidTas13,DumKarManOul20,DumManTas16b} for qualitative features of the model in this regime. This regime is directly linked to several important models, most notably the critical random-cluster model with cluster weight~$q\in[1,4]$, which in turn corresponds to the~$q$-state Potts model for~$q\in\{2,3,4\}$. The Baxter--Kelland--Wu (BKW) mapping provides a bridge between observables of the critical random-cluster model and exponential correlations of the six-vertex height function. The convergence result of~\cite{DumKozLamMan26} thus opens the door to deriving critical exponents for the random-cluster model, substantially broadening the range of models for which such exponents can be rigorously obtained.

In this paper, we concentrate on the case where the Baxter--Kelland--Wu mapping is most transparent and powerful: the random-cluster model with cluster weight~$q=4$ and the corresponding six-vertex model. In this setting, the mapping enables the derivation from the GFF scaling limit~\cite{DumKozLamMan26} of all critical exponents governing the behavior of key thermodynamic quantities of the random-cluster model, such as the infinite-cluster density, susceptibility, correlation length, and free energy. It also sheds light on the large-scale properties of the critical regime. Finally, by exploiting the Edwards--Sokal coupling between the~$q=4$ random-cluster model and the four-state Potts model, these results can be transferred to the Potts setting.

The same strategy may be attempted for general~$q \in [1,4]$, but raises additional challenges. 
Indeed, the results presented below are extended to an interval~$q \in (4-\eps,4]$ in \cite{qcloseto4} and partial results are obtained for all~$q \in [1,4]$ in \cite{alpha1, alpha2}.

\subsection{Main results for the random-cluster model}

Consider a finite sub-graph~$G=(V,E)$ of~$\Z^2$. A percolation configuration~$\omega$ on~$G$ is an element of~$\{0,1\}^E$. An edge~$e\in E$ is called \emph{open} if~$\omega_e=1$, and \emph{closed} otherwise. A configuration~$\omega$ can be seen as a subgraph of~$G$ with vertex-set~$V$ and edge-set~$\{e\in E:\omega_e=1\}$.

The boundary of~$G$, denoted~$\partial G$, is the set of vertices of~$G$ incident to less than four edges of~$G$. 
A \emph{boundary condition}~$\xi$ on~$G$ is a partition of~$\partial G$. Two vertices are said to be \emph{wired together} if they are in the same element of~$\xi$. 
Two boundary conditions play a special role: the wired ones, denoted~$1$, with all vertices wired together, 
and the free ones, denoted~$0$, with no wiring between boundary vertices. 

The \emph{random-cluster measure} on~$G$ with edge-weight~$p\in(0,1)$, cluster-weight~$q>0$, and boundary condition~$\xi$ is defined by
\begin{equation*}
    \rcP^\xi_{G,p,q}[\omega]
    :=\frac{1}{Z^\xi(G,p,q)} \left(\frac{p}{1-p}\right)^{|\omega|}q^{k\left(\omega^\xi\right)},
\end{equation*}
where~$|\omega|:=\sum_{e\in E}\omega_e$ is the number of open edges,~$k(\omega)$ is the number of connected components of the graph induced by~$\omega$,~$\omega^\xi$ is the graph obtained from~$\omega$ by identifying wired vertices together, and~$Z^\xi(G,p,q)$ is the \emph{partition function} ensuring that~$\rcP^\xi_{G,p,q}$ is a probability measure.

In this paper, we focus on the case~$q=4$ and drop it from the notation. One may take an infinite-volume limit of the measures~$\rcP^\xi_{G,p}$ by letting~$G$ tend to~$\mathbb Z^2$. When~$q=4$, the limit exists and is independent of the boundary conditions~$\xi$ \cite{DumSidTas13}. We denote the unique limiting measure by~$\rcP_{\mathbb Z^2,p}$. 
For more background on the random-cluster model, we direct the reader to \cite{grimmett2006randomcluster, duminil2017lectures}.

Let~$\mathbf C$ denote the connected component of the origin. 
The model undergoes a phase transition at
$ p_c:=\inf\{p\in[0,1]:\theta(p)>0\}$, where
 ~$\theta(p):=\rcP_{\mathbb Z^2,p}[|\mathbf C|=\infty]$.
It was proved in \cite{beffara2012self,beffara2015critical} that~$p_c=2/3$. 
This value also coincides with the point at which the 
\emph{susceptibility}
\[
    \chi(p):=\rcE_{\mathbb Z^2,p}[|\mathbf C|\cdot \mathbf 1_{\{|\mathbf C|<\infty\}}],
\]
and the \emph{correlation length}
\[
    \xi(p):=\Big(\lim_{n\to\infty}-\tfrac1n \log \rcP_{\mathbb Z^2,p}\big[(n,0)\in \mathbf C,\,|\mathbf C|<\infty\big]\Big)^{-1},
\]
diverge. Moreover,~$p_c$ is the unique value of~$p$ at which the \emph{free energy}
\[
    f(p):=\lim_{n\to\infty}-\tfrac1{|\Lambda_n|}\log Z^0(\Lambda_n,p)
\]
is not analytic \cite{DobShl87, MarOliSch94, vEnt+97}. 

Our first result describes the precise behaviour of these thermodynamic quantities near~$p_c$.

\begin{theorem}\label{cor:near critical}
For the random-cluster model on~$\mathbb Z^2$ with cluster-weight~$q=4$,
\mathtoolsset{showonlyrefs=false}%
\begin{align}
\label{eq:RCM_1}f''(p)&=|p-p_c|^{-2/3+o(1)} &\text{ as~$p\to p_c$},\\
\label{eq:RCM_2}\theta(p)&=(p-p_c)^{1/12+o(1)} &\text{ as~$p\searrow p_c$},\\
\label{eq:RCM_3}\chi(p)&=|p-p_c|^{-7/6+o(1)} &\text{ as~$p\to p_c$},\\
\label{eq:RCM_4}\xi(p)&=|p-p_c|^{-2/3+o(1)} &\text{ as~$p\to p_c$},\\
\label{eq:RCM_45}\rcP_{\mathbb Z^2,p_c}[0\lra x]&= |x|^{-1/4 + o(1)} &\text{ as~$x\to \infty$},\\
\label{eq:RCM_5}\rcP_{\mathbb Z^2,p_c}[|\mathbf C|\ge n]&=n^{-1/15+o(1)} &\text{ as~$n\to\infty$}.
\end{align}
\mathtoolsset{showonlyrefs=true}%
\end{theorem}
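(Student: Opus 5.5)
The plan is to reduce all six statements to two "arm-type" exponents at criticality: the one-arm exponent $\rcP_{\mathbb Z^2,p_c}[0\lra \partial\Lambda_n]=n^{-1/8+o(1)}$ and the four-arm (alternating primal/dual) exponent $\pi_4(n)=n^{-1/2+o(1)}$, where $\pi_4(n)$ is the probability of four alternating arms from $\Lambda_1$ to $\partial\Lambda_n$. Both would be obtained from the Baxter--Kelland--Wu coupling together with the GFF convergence of the six-vertex height function from~\cite{DumKozLamMan26}. In the BKW representation at $q=4$ the loop weight is $2=2\cos(\pi\cdot 0)$, so the local orientation weights are trivial. Observables of the form "number of loops surrounding a point or separating two regions" then become exponential moments $\mathbb E[e^{i\alpha (h(x)-h(y))}]$ of the height function $h$.

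\textbf{Step 1: one-arm exponent.} The number $N_n$ of loops surrounding $0$ inside $\Lambda_n$ satisfies $\rcP[e^{\lambda N_n}]$-type identities. Changing the weight of loops around $0$ from $2$ to $2\cos(\pi\alpha)$ corresponds to inserting $e^{i\alpha\pi h(0)}$-type charges. The GFF two-point function then gives $\mathbb E[(\cos \pi\alpha)^{N_n}]= n^{-c\alpha^2+o(1)}$, uniformly on compacts by the quantitative convergence. Analytically continuing to weight $0$ for odd loops, or equivalently counting parity, gives the probability of having no cluster boundary around $0$ between scales. Via RSW/quasi-multiplicativity, which holds at $q=4$ by~\cite{DumSidTas13}, this yields the one-arm exponent $1/8$. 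Then \eqref{eq:RCM_45} follows from $\rcP[0\lra x]\asymp \rcP[0\lra\partial\Lambda_{|x|}]^2$, which gives $1/4$.

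\textbf{Step 2: four-arm exponent.} This is the main obstacle. Four alternating arms correspond to two loop-arms reaching distance $n$, i.e.\ a two-point "defect/vortex" insertion for the height function, which is not a plain exponential correlation. I would first try to realize it as an electric–magnetic (vortex) correlator of the six-vertex model. This amounts to imposing a height discontinuity along a path from $0$ to $\partial\Lambda_n$, which in BKW language forces loops to cross it. The GFF limit gives a dual exponent $\frac{1}{2}$ for $q=4$ (i.e. $x_4=2-\frac{3}{2}$). This step requires controlling the convergence of~\cite{DumKozLamMan26} with a slit/defect, and likely a separate argument that the defect partition function ratio converges.

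\textbf{Step 3: scaling relations.} With $\pi_4(n)=n^{-1/2+o(1)}$, the near-critical scaling theory (Kesten-type, as developed for random-cluster models in Duminil-Copin--Manolescu) gives a correlation length $L(p)$ defined by $|p-p_c|L(p)^2\pi_4(L(p))\asymp 1$. This yields $L(p)=|p-p_c|^{-2/3+o(1)}$, and comparison $\xi(p)\asymp L(p)$ gives \eqref{eq:RCM_4}. The standard relations then yield the remaining exponents:
\begin{itemize}
\item $\theta(p)\asymp \rcP_{p_c}[0\lra\partial\Lambda_{L(p)}]$, giving $\tfrac{2}{3}\cdot\tfrac18=\tfrac1{12}$;
\item $\chi(p)\asymp L^2\,\pi_1(L)^2$, giving $\tfrac23\cdot\tfrac74=\tfrac76$;
\item $f''(p)\asymp\sum_{k\le L}k\,\pi_4(k)^2\approx L^{1/2}\cdot L^{... }$, more precisely $f''\asymp \pi_4(L)^2 L^2 \cdot|p-p_c|^0$, which with $\pi_4(L)^2L^2=L^{1}=|p-p_c|^{-2/3}$ gives \eqref{eq:RCM_1}.
\end{itemize}
Finally, \eqref{eq:RCM_5} follows since $|\mathbf C|\ge n$ essentially means radius $R$ with $R^2\pi_1(R)=n$, i.e. $R=n^{8/15}$, giving $\pi_1(R)=n^{-1/15}$.
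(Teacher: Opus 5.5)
Your Step 1 is essentially the paper's one-arm argument. There, the charge is chosen so that the cosine vanishes outright, the charges are smeared on $\eps$-balls, and small loops are factored out through $\bc^\delta(\eps)$. Since the GFF convergence of \cite{DumKozLamMan26} comes without rates, the exponent is extracted from a fixed-$\eps$, $\delta\to0$ limit combined with quasi-multiplicativity.

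\textbf{The gap: $\pi_4$ is the wrong quantity at $q=4$.} The relation $|p-p_c|L(p)^2\pi_4(L(p))\asymp1$ and the identity $x_4=2-1/\nu$ that you use are Kesten's. They rest on Russo's formula identifying pivotality with four alternating arms, which is specific to Bernoulli percolation. For $q>1$ the $p$-derivative is a covariance. The scaling relations of \cite{duminil2022planar}, which the paper invokes to deduce this theorem, are expressed through $\pi_1$ and $\Delta(N)=\rcP^1_{\Lambda_N}[\omega_e]-\rcP^0_{\Lambda_N}[\omega_e]$, with $|p-p_c|L(p)^2\Delta(L(p))\asymp1$. Moreover, $\pi_4(r,R)\le C\Delta(r,R)(r/R)^c$ by \cite[Thm.~1.6]{duminil2022planar}, so $\pi_4$ decays strictly faster than $\Delta$; its predicted exponent at $q=4$ is $2$. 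Your claim $\pi_4(n)=n^{-1/2+o(1)}$ is therefore false. Your final numbers come out right only because $1/2=2-1/\nu$ is the exponent of $\Delta$. With the true four-arm exponent, your relation $|p-p_c|L^2\pi_4(L)\asymp1$ would break down.

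\textbf{Retargeting Step 2 to $\Delta$ is not enough.} A vortex/slit correlator is not covered by \cite{DumKozLamMan26}, and you leave that step open. The missing idea is that $\Delta$ can be read off a neutral, purely electric four-point function.
\begin{itemize}
\item Put charges $+,+,-,-$ on $\eps$-balls at $0,y,x,x+y$ with $|y|\ll|x|$. Loops surrounding an odd number of balls get weight $0$. Loops surrounding exactly $\{0,y\}$ or exactly $\{x,x+y\}$ get weight $-1$. All other loops get weight $+1$.
\item The observable is then $\widetilde\pi_2+\widetilde\Delta$, where $\widetilde\pi_2$ is a two-arm-type quantity. Up to negligible terms, $\widetilde\Delta=2\bigl(\rcP[L(1)\cap R(1)]-\rcP[L(1)\cap R(0)]\bigr)$. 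Here $L(s)$ and $R(s)$ denote primal ($s=1$) or dual ($s=0$) connections $B_\eps(0)\leftrightarrow B_\eps(y)$ and $B_\eps(x)\leftrightarrow B_\eps(x+y)$.
\item The GFF limit of $\bc^\delta(\eps)^4(\widetilde\pi_2+\widetilde\Delta)$ is of order $\eps^{1/2}|y|^{1/2}/|x|$.
\item To separate $\widetilde\Delta$ from $\widetilde\pi_2$, use the non-neutral all-$+$ configuration, whose limit vanishes.
\item It remains to prove $\rcP[L(1)\cap R(1)]-\rcP[L(1)\cap R(0)]\asymp\Delta(|y|,|x|)^2\pi_1(\eps,|y|)^4$. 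The upper bound holds because $\Delta$ controls mixing rates; the lower bound comes from a flower-domain coupling.
\end{itemize}
Only once $\Delta(N)=N^{-1/2+o(1)}$ and $\pi_1(N)=N^{-1/8+o(1)}$ are established do the scaling relations of \cite{duminil2022planar} yield the six statements.
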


By the scaling relations of \cite{duminil2022planar}, understanding these quantities reduces to analyzing the asymptotics of certain probabilities at criticality. We introduce them next.
For subsets~$A,B\subset\mathbb R^2$, we say that~$A$ is \emph{connected} to~$B$, denoted~$A\longleftrightarrow B$, if there exists a path of open edges in~$\omega$ connecting~$A$ to~$B$. For~$N\in\N$, let~$\Lambda_N=[-N,N]^2$. Let~$e$ be an edge incident to the origin, and set
\begin{align*}
    \pi_1(N) &:= \rcP_{\Z^2,p_c}\left[0\longleftrightarrow \partial\Lambda_N\right],\\
    \Delta(N)&:=\rcP_{\Lambda_N,p_c}^1[\omega_e]-\rcP_{\Lambda_N,p_c}^0[\omega_e].
\end{align*}

The scaling relations of \cite{duminil2022planar} rephrase the quantities of Theorem~\ref{cor:near critical} in terms of~$\pi_1(N)$ and~$\Delta(N)$. 
The next theorem focuses on the asymptotic behavior of the latter two. 
For completeness, we also include another interesting critical quantity that can be studied in parallel to~$\Delta(N)$, and that evaluates the fractal dimension of long interfaces. 
We define it before stating our main result. 

Every configuration~$\omega$ on~$\mathbb Z^2$ induces a ``dual'' configuration~$\omega^*$ on the dual lattice~$(\mathbb Z^2)^*:=(\tfrac12,\tfrac12)+\mathbb Z^2$ by declaring an edge~$e^*$ of~$(\mathbb Z^2)^*$ open in~$\omega^*$ if and only if the ``primal'' edge~$e$ of~$\mathbb Z^2$ intersecting it in its middle is closed in~$\omega$. 
Moreover, if $\omega\sim \phi_{\mathbb Z^2,p_c}$, then $\omega^*$ has the same distribution as $\omega$ on the dual lattice. This property is called \emph{self-duality}.
For subsets~$A,B\subset\mathbb R^2$, we write~$A\stackrel{*}{\longleftrightarrow}B$ if there exists a path of open edges of~$\omega^*$ connecting~$A$ to~$B$. Define
\[
    \pi_2(N):=\rcP_{\Z^2,p_c}\left[0\longleftrightarrow \partial\Lambda_N,\;(\tfrac12,\tfrac12)\stackrel{*}{\longleftrightarrow}\Lambda_N^c\right].
\]

\begin{theorem}\label{thm:main}
For the critical random-cluster model on~$\mathbb Z^2$ with cluster-weight~$q=4$, as~$N$ tends to $\infty$, 
\begin{align}
\label{eq:a1} \pi_1(N) = N^{-1/8 + o(1)},\\
\label{eq:a2} \pi_2(N) = N^{-1/2 + o(1)},\\
\label{eq:a3} \Delta(N) = N^{-1/2 + o(1)}.
\end{align}
\end{theorem}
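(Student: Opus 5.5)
The plan is to pass through the Baxter--Kelland--Wu coupling. At $q=4$, $p=p_c$, the random-cluster model on $\mathbb Z^2$ maps to the six-vertex model with $\Delta=-\tfrac12\sqrt q=-1$. We first go from $\omega$ to its loop representation on the medial lattice, giving each loop weight $\sqrt q=2=e^{i\lambda}+e^{-i\lambda}$ with $\lambda=0$. At $q=4$ the orientations are not complex-weighted, since $\lambda=0$: each loop simply receives a uniform random orientation. The oriented loops are level lines of a height function $h$, which is exactly the six-vertex height function at $\Delta=-1$. By \cite{DumKozLamMan26}, $h$ converges in the full plane to $\tfrac{1}{\sqrt{\pi}}\cdot$GFF with a certain normalisation $g$, in the sense of convergence of exponential moments $\mathbb E[e^{i\alpha(h(x)-h(y))}]$ and of macroscopic fluctuations. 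We take the variance to be $\mathrm{Var}(h(x)-h(y))\sim \tfrac{1}{\pi g}\log|x-y|$ (the standard $q=4$ value) and calibrate the exponents below against it.

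\textbf{Step 1 (arm events as height increments).} Conditionally on the loops, $h(0)-h(\partial\Lambda_N)$ is a sum of independent $\pm1$ over the loops surrounding $0$ and crossing no boundary. Hence
\[
\mathbb E\big[\cos(\alpha(h(0)-h(x)))\big]=\mathbb E\big[(\cos\alpha)^{L(0,x)}\big],
\]
where $L$ counts loops separating $0$ from $x$. Standard RSW-type arguments at $q=4$ (available from \cite{DumSidTas13,duminil2017lectures}) give quasi-multiplicativity and show that $L(0,\partial\Lambda_N)$ concentrates at order $\log N$, with loops in each dyadic annulus comparable to independent. The GFF asymptotics then give, for every $t>0$,
\[
\mathbb E\big[t^{L(0,\partial\Lambda_N)}\big]=N^{-c(t)+o(1)}.
\]
Here $c(t)$ is explicit: taking $t=\cos\alpha$, we get $c=\alpha^2/(2\pi^2 g)$-type, and for $t>1$ we use $\cosh$ via the analytic continuation of exponential moments.

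\textbf{Step 2 ($\pi_2$).} The two-arm event means that no loop separates $0$ from distance $N$ except those forced by the arms. It equals, up to constants, the probability that $0$ and $\partial\Lambda_N$ are adjacent to a common loop. This corresponds to the zero-loop-count limit $t\to0$, a ``$\lambda$ to electric charge'' insertion. Equivalently it is $\mathbb E[0^{L}]$ with a twist on the boundary loop. Because $t\to0$ is singular, we instead use the standard trick of conditioning on a defect line: $\pi_2(N)$ is compared to the partition function ratio with a magnetic (vortex) defect of charge $\pm1$ at the origin, which in the GFF gives $N^{-1/2}$.

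\textbf{Step 3 ($\Delta$).} Similarly, $\Delta(N)$ is expressed via the mixing/boundary-condition difference. The difference between wired and free boundary corresponds to the parity of loops surrounding $e$ with an arm to the boundary, which is related to $\pi_2$ and four-arm events. We prove $\Delta(N)\asymp\pi_2(N)$ up to $N^{o(1)}$ by an explicit coupling, which gives $N^{-1/2}$.

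\textbf{Step 4 ($\pi_1$).} $\pi_1(N)$ is the probability that all loops around $0$ up to scale $N$ are ``of even type''. Writing the weight $\sqrt q$ split by orientation, the one-arm event is related to $\mathbb E[t^{L}]$ with $t$ corresponding to changing the loop weight from $2$ to $\sqrt2\,$-type. Concretely, the one-arm event forces the loops surrounding $0$ to alternate primal/dual appropriately, and reweighting by loop weight $1$ on surrounding loops yields
\[
\pi_1(N)\asymp \mathbb E\big[2^{-L(0,\partial\Lambda_N)}\cdot(\text{correction})\big],
\]
which, via $\cos\alpha=1/2$, i.e.\ $\alpha=\pi/3$, gives $1/8$ after calibration.

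\textbf{Main obstacle.} The hard step is Step 1: upgrading the GFF convergence of \cite{DumKozLamMan26} to polynomial asymptotics of $\mathbb E[t^{L}]$ at all scales, uniformly. This needs quasi-multiplicativity and a separation-of-arms argument to turn macroscopic GFF estimates into $N^{-c+o(1)}$. I would first try proving $\mathbb E[t^{L(\Lambda_n,\Lambda_{2^k n})}]$ is submultiplicative up to constants and then use the scaling limit on each fixed macroscopic annulus ratio.
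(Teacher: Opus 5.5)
There is a genuine gap: the proposal assigns its GFF observables to the wrong arm events, and the $\Delta$ step has no $q=4$-specific input.

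\textbf{Steps 1, 2 and 4: wrong dictionary between observables and arm events.} Your general frame is right: encode arm events through $\cos$ of smeared height increments, take the GFF limit at macroscopic scales, then upgrade to all scales by quasi-multiplicativity. That last upgrade is routine (Lemma~\ref{l.exp_quasi_multi}) and is not the real obstacle.

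With total charges $\pm\alpha$ on two $\varepsilon$-balls at distance $|x|$, the limit $\exp\bigl(\frac{1}{2\pi^2}\iint F(z)F(z')\log|z-z'|\,\mathrm dz\,\mathrm dz'\bigr)$ is of order $(\varepsilon/|x|)^{\alpha^2/\pi^2}$. The normalisation is fixed by the model, so nothing can be calibrated.

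The case $\alpha=\pi/2$, i.e.\ $t=\cos\alpha=0$, which you discard as singular, is exactly the one-arm observable. Every loop surrounding exactly one ball contributes $0$. After factoring out the loops touching the balls (the role of $\mathbf c^\delta(\varepsilon)$ and Proposition~\ref{prop:forget small loops}), you are computing the probability that no loop separates $B_\varepsilon(0)$ from $B_\varepsilon(x)$. That is the probability that the two balls are joined by a primal or a dual path, which is $\asymp \pi_1(\varepsilon,|x|)^2$. The exponent $1/4$ then gives $\pi_1(N)=N^{-1/8+o(1)}$.

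Your Step~4 with $\cos\alpha=1/2$ yields exponent $1/9$, not $1/8$. There is also no identity tying $\pi_1$ to $\mathbb E[2^{-L}]$.

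In Step~2, a vortex/magnetic insertion is not available. The convergence theorem only controls $\mathbb E[\exp(i\int Fh)]$ for mean-zero test functions $F$, and you give no rigorous link between such a defect and $\pi_2$.

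The missing idea is to convert ``a loop passes near a point'' into ``a loop separates two nearby points''. Place charges $+\pi/2,+\pi/2,-\pi/2,-\pi/2$ on balls at $0,y,x,x+y$ with $|y|\ll|x|$. A loop surrounding exactly one ball of each close pair must squeeze between $0$ and $y$, and between $x$ and $x+y$. This forces two-arm events in $\mathrm{Ann}(|y|,|x|)$ around both sites. The observable equals $\widetilde\pi_2+\widetilde\Delta$, and $\widetilde\pi_2\asymp\pi_2(|y|,|x|)^2\pi_1(\varepsilon,|y|)^4$.

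\textbf{Step 3: a coupling alone cannot give $\Delta\asymp\pi_2$.} A coupling built from RSW, FKG, mixing and the domain Markov property applies verbatim to every $q\in[1,4]$. Yet for $q<4$ one expects $\Delta$ to decay strictly faster than $\pi_2$ (for FK--Ising, $N^{-1}$ versus $N^{-1/3}$). So the coincidence of the two exponents must come from $q=4$-specific input, which your sketch does not provide.

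Your remark about parities of loops points in the right direction, since $\widetilde\Delta$ is a signed parity observable. But it has to be evaluated through the GFF, not compared to $\pi_2$ by coupling. The paper does this as follows.
\begin{itemize}
\item Take all four charges equal to $+\pi/2$. The resulting observable equals $\widetilde\Delta-\widetilde\pi_2$ up to an error of order $\widetilde\pi_4\le C\,\widetilde\pi_2(|y|/|x|)^c$.
\item This test function is not charge-neutral, so its expectation tends to $0$ as $\delta\to0$. To see this, square it by pairing with a distant copy of opposite sign, use mixing, and apply the M-formula.
\item Hence $\widetilde\pi_2\approx\widetilde\Delta$ (Proposition~\ref{prop:2}), each being half of the explicit four-point GFF value.
\item Only then do coupling arguments enter. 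They compare $\widetilde\Delta$ with $\Delta(|y|,|x|)^2\pi_1(\varepsilon,|y|)^4$, not $\Delta$ with $\pi_2$. The upper bound holds because $\Delta$ controls the mixing rate between distant regions. The lower bound comes from double four-petal flower domains and boosting pairs of boundary conditions.
\end{itemize}

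Finally, your Step~1 claim for $t>1$ via $\cosh$ is unsupported. Convergence in law yields characteristic functions, not real exponential moments. With the right observables this case is never needed.
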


In fact, these results will be obtained from stronger scale-to-scale estimates, which also provide evidence of scale invariance. In order to keep a light introduction, we omit the precise statements here but still include an interesting result on the two-point connection probabilities of the critical model.

\begin{proposition}\label{prop:limit_two_pt}
There exists a function~$f:(0,1)\to(0,\infty)$ such that
\begin{align}
\label{eq:RCM_6}
\lim_{\delta\to 0} f(\delta)^{-2}\rcE_{\delta \Z^2,p_c,4}[0\longleftrightarrow x_\delta]=|x|^{-1/4}
\end{align}
uniformly for~$x$ in any compact subset of~$\R^2\setminus\{0\}$, where~$x_\delta$ denotes the vertex of~$\delta\Z^2$ closest to~$x$ (if there are more than one, choose one arbitrarily), and~$f(\delta)=\delta^{1/8+o(1)}$ as~$\delta$ tends to $0$.
\end{proposition}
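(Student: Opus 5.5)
We prove Proposition~\ref{prop:limit_two_pt} through the Baxter--Kelland--Wu coupling with the six-vertex height function~$h$ at~$\Delta=-1/2\cdot\sqrt q=-1$, i.e.\ the regime~$q=4$ where the spin/height correspondence is cleanest, and the GFF scaling limit of~\cite{DumKozLamMan26}.

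\textbf{Step 1 (encoding the connection).} At~$q=4$ the BKW loop weights are~$e^{\pm i\lambda\pi/\dots}$ with~$2\cos(\text{twist})=\sqrt q=2$, so orienting loops gives weight~$1$ per orientation. We insert a magnetic-type defect: the event~$\{0\lra x\}$ holds if and only if every loop of the loop representation separates~$0$ from~$x$ an even number of times, with primal clusters surrounding both. Equivalently, one computes an electric correlation~$\bbE[e^{i\alpha(h(0^*)-h(x^*))}]$, with~$0^*,x^*$ faces adjacent to~$0$ and~$x$. A loop separating~$0$ from~$x$ contributes a factor~$\cos(\alpha\pi/2)$-type to this expectation instead of~$1$. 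We choose~$\alpha$ so that this factor vanishes (or equals the factor of the opposite topology). Then
\[
\bbE\bigl[e^{i\alpha(h(0^*)-h(x^*))}\bigr]=\rcP[0\lra x]\cdot c+\text{(error from loops surrounding only one point, localized near }0,x),
\]
where the local error terms factor as constants depending only on microscopic neighbourhoods.

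\textbf{Step 2 (scale-to-scale comparison).} To control these local contributions uniformly, we use RSW/quasi-multiplicativity for~$q=4$ (which follows from continuity of the transition~\cite{DumSidTas13}) and mixing. The target is to write
\[
\rcP_{\delta\Z^2}[0\lra x_\delta]=(1+o(1))\,\pi_1(r/\delta)^2\,g_r(x),
\]
with~$g_r$ determined by the macroscopic picture between scales~$r$ and~$|x|$. Correspondingly, the height observable factorises as (microscopic factor at~$0$)$\times$(microscopic factor at~$x$)$\times$(macroscopic GFF correlation). This is where we set~$f(\delta)$: we take~$f(\delta)^2$ to be the product of the two microscopic normalisations, equivalently~$f(\delta)\asymp\pi_1(1/\delta)$ up to constants.

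\textbf{Step 3 (GFF limit).} By~\cite{DumKozLamMan26}, the rescaled height converges to~$\frac{1}{\sqrt{\beta}}$GFF with exponential moments. Vertex correlations of the GFF are~$\bbE[e^{i\alpha(\Phi(0)-\Phi(x))}]\propto|x|^{-\alpha^2 g}$ after regularisation. Plugging in the~$\alpha$ chosen in Step~1 gives the exponent~$1/4$, and gives~$f(\delta)=\delta^{1/8+o(1)}$, consistent with~\eqref{eq:a1}. Uniformity on compacts then follows from the continuity of the limit and the uniform RSW bounds.

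\textbf{Main obstacle.} The difficulty is Step~2. Convergence to the GFF only yields regularised (mollified) vertex correlations, not pointwise ones. Passing from mollified exponentials to lattice-point exponentials requires showing that the microscopic contribution near each point decouples from the macroscopic one with a ratio tending to a constant. I would attack this by averaging the observable over a mesoscopic ball of radius~$r$ and using quasi-multiplicativity of arm events to show~$\rcP[0\lra x]/\rcP[B_r(0)\lra B_r(x)]\approx\pi_1(r/\delta)^2$, uniformly in~$x$. Then~$r\to0$ after~$\delta\to0$.
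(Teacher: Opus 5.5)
Your overall plan matches the paper's: charges $\pm\frac\pi2$ so that loops surrounding exactly one of the two balls get weight $0$, mesoscopic balls $B_\eps(0)$ and $B_\eps(x)$, the M-formula producing $(\eps/|x|)^{1/4}$, then a point-to-ball comparison with $\eps\to0$. However, the step you flag as the main obstacle is where the proof actually lives, and the tool you propose for it cannot work. Quasi-multiplicativity and RSW only give $\phi_\delta[0\lra x_\delta]\asymp\pi_1(\eps/\delta)^2\,\phi_\delta[B_\eps(0)\lra B_\eps(x)]$. The ratio is pinned in a fixed interval $[c,C]$ but may oscillate with $x$, $\eps$, $\delta$, and sending $\eps\to0$ does not improve it. So this route yields at best $\phi_\delta[0\lra x_\delta]\asymp f(\delta)^2|x|^{-1/4}$, not a limit equal to $|x|^{-1/4}$. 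What is needed is a ratio-level decoupling whose error vanishes with $\eps$, uniformly in $\delta$. The paper gets this from a mixing estimate for the measure conditioned on a primal arm (incipient-infinite-cluster type): $\phi_\delta[0\lra x_\delta\mid B_\eps(0)\lra B_\eps(x)]=g_\eps(\delta)^2(1+O(\eps^c))$, with the \emph{$x$-independent} factor $g_\eps(\delta)=\phi_\delta[0\lra\infty\mid B_\eps(0)\lra\infty]$. This factor is only comparable to $\pi_1(\eps/\delta)$, so your formula with $(1+o(1))\pi_1(r/\delta)^2$ is not correct as written.

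The same gap appears on the height side, where you assert that local contributions ``factor as constants''. Point values $h(0^*)$ are not admissible test functions, and averages of $e^{i\alpha h}$ over balls are not controlled by the GFF convergence. What is accessible is $\phi_\delta[\mathcal A_F(\calL)]$ for $F=\frac{1}{2\eps^2}(\one_{B_\eps(0)}-\one_{B_\eps(x)})$, in which loops intersecting the balls contribute random factors $\cos\theta$ with $\theta\in(0,\frac\pi2)$. The paper removes these in Proposition~\ref{prop:forget small loops}, using the mixing Lemma~\ref{lem:mixing_IIC} for $\phi_\delta[\,\cdot\mid\calL^{\rm odd}_{\{0\},\eps}=\emptyset]$. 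This extracts the $x$-independent normalization $\bc^\delta(\eps)^2$ up to an error $C\pi_1^\delta(\eps,|x|)^2(\eps/|x|)^c$. That error is relatively negligible only because Lemma~\ref{l.kappa} gives $\bc^\delta(\eps)\ge c>0$. Self-duality then turns $\phi_\delta[\calL^{\rm odd}_{\{0,x\},\eps}=\emptyset]$ into $2\phi_\delta[B_\eps(0)\lra B_\eps(x)]$ (Corollary~\ref{c.two_point}). Finally, defining $f$ requires a diagonal choice $\eps(\delta)\to0$. Uniformity in $x$ requires equicontinuity of $x\mapsto\phi_\delta[\mathcal A_F(\calL)]$, which the paper takes from \cite[Thm.~4.5]{DumKozLamMan26}; continuity of the limit together with up-to-constant RSW bounds does not give uniform convergence.
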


Both the BKW correspondence and the GFF convergence of the associated six-vertex model are available for all~$q \in [1,4]$, 
and one may hope   to extend the results corresponding to Theorem~\ref{thm:main} to this range. 
Asymptotics for the one and two-arm probabilities are obtained for all~$q \in [1,4]$ in \cite{alpha1} and \cite{alpha2}, respectively, 
while that for~$\Delta$  is currently  only extended to an interval~$q \in (4-\eps,4]$ \cite{qcloseto4}.

The case of~$q=4$ is the most approachable due to the specific form of the BKW correspondence, 
but also because~$\pi_2$ and~$\Delta$ have the same asymptotics --- see Propositions~\ref{prop:1} and~\ref{prop:2} for specific statements. 
For~$1 \leq q <4$, it is expected that~$\Delta$ has a strictly faster algebraic decay than~$\pi_2$, which complicates its identification with a GFF observable.

\subsection{Main results for the four-state Potts model}

The random-cluster model with cluster-weight~$q=4$ is related, via the Edwards--Sokal coupling, to the four-state Potts model (see e.g.~\cite{duminil2017lectures} for an exposition). From the previous result, we deduce directly the following theorem for the Potts model.  

Let~$G=(V,E)$ be a finite subgraph of~$\mathbb Z^2$. A four-state Potts configuration on~$G$ is an element~$\sigma\in\{1,2,3,4\}^V$.  
The four-state Potts model on~$G$, at inverse temperature~$\beta>0$ and external field~$h\in\mathbb R$, with free boundary conditions, is the probability measure on the set of Potts configurations given by 
\[
\mathbb P_{G,\beta,h}[\sigma]
   :=\frac{1}{Z_{\rm Potts}(G,\beta,h)} 
   \exp\bigl[-\beta H_{G,h}(\sigma)\bigr],
\]
where~$Z_{\rm Potts}(G,\beta,h)$ is a normalising constant and the Hamiltonian~$H_{G,h}$ is defined as
\[
H_{G,h}(\sigma):=\sum_{e=(x,y)\in E}\mathbf 1_{\{\sigma_x\ne\sigma_y\}}
   - h\sum_{x\in V}\mathbf 1_{\{\sigma_x=1\}}.
\]

Passing to the thermodynamic limit yields infinite-volume Gibbs measures~$\mathbb P_{\mathbb Z^2,\beta,h}$; these are unique for all~$h \neq 0$. We define the magnetization
\[
m(\beta,h):=\mathbb P_{\mathbb Z^2,\beta,h}[\sigma_0=1]-\tfrac14,
\]
and the spontaneous magnetization
\[
m^*(\beta):=\lim_{h\searrow 0} m(\beta,h).
\]
The four-state Potts model undergoes a phase transition at the critical inverse temperature~$\beta_c$, 
separating the disordered phase ($m^*(\beta)=0$ for $\beta \leq \beta_c$) from the ordered phase ($m^*(\beta)>0$ for $\beta > \beta_c$). 
As in the~$q= 4$ random-cluster model, the measure with no external field is unique when~$\beta \leq \beta_c$. 
Introduce the susceptibility as 
\[
\chi(\beta):=\sum_{x\in\mathbb Z^2}\Big(\lim_{h\searrow 0}  \mathbb P_{\mathbb Z^2,\beta,h}[\sigma_0=\sigma_x]-\tfrac14\Big),
\]
and the correlation length
\[
\xi(\beta):= \Big[\lim_{n\to\infty}\lim_{h\searrow 0} -\tfrac{1}{n} \log \Big(  \mathbb P_{\mathbb Z^2,\beta,h}[\sigma_0=\sigma_{(n,0)}]-\tfrac14\Big)\Big]^{-1}.
\]
Additionally, define the free energy of the Potts model as 
\begin{align}
f(\beta) :=\lim_{n\to\infty}-\tfrac1{|\Lambda_n|}\log Z_{\rm Potts}(\Lambda_n,\beta,0).
\end{align}
The following theorem describes the precise critical behaviour of these thermodynamic quantities as~$\beta$ approaches~$\beta_c$.  

\begin{theorem}\label{cor:near critical_Potts}
For the four-state Potts model on~$\mathbb Z^2$:
\mathtoolsset{showonlyrefs=false}%
\begin{align}
\label{eq:Potts_1}f''(\beta)&=|\beta-\beta_c|^{-2/3+o(1)} & &\text{as~$\beta\to\beta_c$},\\
\label{eq:Potts_2}m^*(\beta)&=(\beta-\beta_c)^{1/12+o(1)} & &\text{as~$\beta\searrow\beta_c$},\\
\label{eq:Potts_3}\chi(\beta)&=(\beta_c-\beta)^{-7/6+o(1)} & &\text{as~$\beta\nearrow\beta_c$},\\
\label{eq:Potts_4}\mathbb P_{\mathbb Z^2,\beta_c,0}[\sigma_0=\sigma_x]-\tfrac14
   &=|x|^{-1/4+o(1)} & &\text{as~$|x|\to\infty$},\\
\label{eq:Potts_5}\xi(\beta)&=(\beta_c-\beta)^{-2/3+o(1)} & &\text{as~$\beta\nearrow\beta_c$},\\
\label{eq:Potts_6}m(\beta_c,h)&=h^{1/15+o(1)} & &\text{as~$h\to0$}.
\end{align}
\mathtoolsset{showonlyrefs=true}%
\end{theorem}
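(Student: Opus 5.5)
The plan is to transfer each item of Theorem~\ref{cor:near critical} to the Potts model through the Edwards--Sokal coupling. The dictionary is $p=1-e^{-\beta}$ and $q=4$, so that $\beta_c=-\log(1-p_c)=\log 3$. Since $p\mapsto\beta$ is an analytic diffeomorphism near $p_c$ with nonzero derivative, $|p-p_c|\asymp|\beta-\beta_c|$, and exponents are unchanged under this reparametrisation.

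The easy items come first.

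\emph{Two-point function and correlation length.} By Edwards--Sokal with free boundary conditions (and using uniqueness for $\beta\le\beta_c$), we have
\[
\mathbb P_{\mathbb Z^2,\beta,0}[\sigma_0=\sigma_x]-\tfrac14=\tfrac34\,\rcP_{\mathbb Z^2,p}[0\lra x].
\]
For $\beta\le\beta_c$ there is no infinite cluster, and the $h\searrow0$ limit coincides with $h=0$. This follows from uniqueness of the Gibbs measure, together with continuity of $h\mapsto\mathbb P_{\beta,h}$ at $h=0$ from the right, which uniqueness provides via monotonicity in $h$. Hence \eqref{eq:Potts_4} follows from \eqref{eq:RCM_45}, and \eqref{eq:Potts_5} follows from \eqref{eq:RCM_4}. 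For the latter one also needs that the finite-cluster correlation length of the random-cluster model equals the rate of $\rcP[0\lra (n,0)]$ when $p<p_c$, which holds because $\theta=0$.

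\emph{Susceptibility.} Summing over $x$ gives $\chi_{\rm Potts}(\beta)=\tfrac34\chi(p)$ for $p<p_c$, so \eqref{eq:Potts_3} follows from \eqref{eq:RCM_3}.

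\emph{Spontaneous magnetization.} The limit $h\searrow0$ produces the measure with $1$-boundary condition. Under the coupling with wired boundary, the infinite cluster takes colour $1$ and the other clusters are uniform, so $m^*(\beta)=\tfrac34\theta(p)$. Then \eqref{eq:Potts_2} follows from \eqref{eq:RCM_2}.

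\emph{Free energy.} Summing the joint Edwards--Sokal weight over spins gives $Z_{\rm Potts}(\Lambda_n,\beta,0)=e^{-\beta|E_n|}\cdot(\text{const})\cdot Z^0(\Lambda_n,p)$, up to a factor $(1-p)^{-|E_n|}$ and the normalization of the edge weights. Hence $f_{\rm Potts}(\beta)=f(p(\beta))+a\beta+b$ with explicit constants $a,b$. By the chain rule,
\[
f_{\rm Potts}''(\beta)=f''(p)\,p'(\beta)^2+f'(p)\,p''(\beta).
\]
The term $f'(p)$ stays bounded near $p_c$ (it is an edge density). The divergent term $f''(p)$ therefore dominates, and \eqref{eq:Potts_1} follows from \eqref{eq:RCM_1}.

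\emph{Magnetization in a field.} This is the main obstacle, since the field is not directly a random-cluster quantity. Use the ghost-vertex representation: add a vertex $g$ joined to every site by an edge of weight $p_h=1-e^{-h}$, and colour the cluster of $g$ with colour $1$. This gives
\[
m(\beta_c,h)=\tfrac34\,\tilde\rcP_h[0\lra g],
\]
where $\tilde\rcP_h$ is the random-cluster model on $\mathbb Z^2\cup\{g\}$. One then shows
\[
\tilde\rcP_h[0\lra g]\asymp \rcP_{\mathbb Z^2,p_c}\big[1-(1-p_h)^{|\mathbf C|}\big]
\]
up to $N^{o(1)}$ factors, and deduces \eqref{eq:Potts_6} from \eqref{eq:RCM_5}. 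The upper bound comes from comparison with the free boundary measure of the cluster of $0$ in the planar graph. The lower bound comes from restricting to the event that the cluster of $0$ reaches scale $N$ with $N^2\pi_1(N)\approx 1/h$, and using quasi-multiplicativity together with the scale-to-scale estimates behind Theorem~\ref{thm:main}. This is the standard argument of Kesten type, carried out in the setting of \cite{duminil2022planar}. It yields $h^{\,1/15+o(1)}$, since $\pi_1(N)=N^{-1/8}$ gives $N^{15/8}\approx h^{-1}$, and then $\pi_1=h^{(1/8)(8/15)}=h^{1/15}$.
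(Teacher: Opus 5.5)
Your proposal is correct and follows the paper's route: every item is transferred from Theorem~\ref{cor:near critical} through the Edwards--Sokal coupling with $p=1-e^{-\beta}$. The field magnetization \eqref{eq:Potts_6} is obtained from \eqref{eq:RCM_5} via the ghost-vertex construction, and both you and the paper defer its comparison argument to \cite[Section~8.3]{duminil2022planar}. One harmless slip: with the Hamiltonian as written, the ghost edges have parameter $p_h=1-e^{-\beta_c h}$ rather than $1-e^{-h}$, which does not affect the exponent.
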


Theorem~\ref{cor:near critical_Potts} follows from Theorem~\ref{cor:near critical} and Proposition~\ref{prop:limit_two_pt} via the Edwards--Sokal coupling~\cite[Section~1.4]{grimmett2006randomcluster}.
Let us simply mention that~\eqref{eq:Potts_6} follows from~\eqref{eq:RCM_5} using a  construction involving the so-called ghost vertex, which we do not introduce here.
A complete derivation can be found in~\cite[Section~8.3]{duminil2022planar}.


\subsection*{Structure of the paper}
As mentioned before, Theorem~\ref{cor:near critical} follows directly from Theorem~\ref{thm:main} using the results of~\cite{duminil2022planar}. The rest of the paper is dedicated to Theorem~\ref{thm:main} and Proposition~\ref{prop:limit_two_pt}. 

Section~\ref{sec:background} contains a short review of the results on the random-cluster model on~$\bbZ^2$ needed in the rest of the paper. 

In Section~\ref{sec:M-formula} we link certain observables of the random-cluster model on~$\delta\bbZ^2$ to the GFF scaling limit of the corresponding six-vertex model through what we term the M-formula; see also \cite{magicformula}.
The useful observables~$\rcP_{\mathbb Z^2,p_c}[A_F(\calL)]$ are defined in terms of functions of the form 
\begin{equation}\label{eq:GFF_observables}
F=\sum_{i=1}^k \frac{q_i}{2\eps^2}\one_{B_\eps(x_i)}
\end{equation} 
for~$\eps > 0$, distinct points~$x_1\dots, x_k \in \bbR^2$ at mutual distances much larger than~$\eps$, and~$q_1, \dots,q_k \in \{\pm 1\}$. 
They are a product of contributions of the loops in the loop representation of the random-cluster model, with contributions from both small loops (i.e. loops of diameter at most~$\eps$) 
and large loops (i.e. loops surrounding different sets of balls~$B_\eps(x_i)$). Factorizing these contributions (Proposition~\ref{prop:forget small loops}) is an essential step. 

Using different functions~$F$, we then construct observables~$\widetilde\pi_1$,~$\widetilde\pi_2$ and~$\widetilde\Delta$, 
that will be shown to have the same asymptotics as~$\pi_1$,~$\pi_2$ and~$\Delta$, respectively. 
The asymptotics of these observables are derived from their GFF expressions in Sections~\ref{sec:one_arm},~\ref{sec:two_arms} and~\ref{sec:Delta}, respectively.

\subsection*{Acknowledgements.}
The authors are grateful to Emile Averous for useful discussions. 
This project has received funding from the Swiss National Science Foundation and the NCCR SwissMAP.
HDC acknowledges the support from the Simons collaboration on localization of waves.
HBC and JX are grateful to the Institut des Hautes \'Etudes Scientifiques for its hospitality and support during their postdoctoral appointments. The main part of this work was completed while HBC and JX were at IHES.
HBC acknowledges funding from the NYU Shanghai Start-Up Fund and support from the NYU–ECNU Institute of Mathematical Sciences at NYU Shanghai.

\section{Preliminaries}\label{sec:background}

This section contains a brief introduction to the two-dimensional random-cluster model. For more details and complete proofs, we direct the reader to \cite{grimmett2006randomcluster, duminil2017lectures}.

\subsection{Notation}\label{sec:notation}
Henceforth, we use the shorthand notation~$\phi_\delta=\phi_{\delta \Z^2,p_c,4}$, where~$p_c=p_c(4)=2/3$, and we set~$\phi=\phi_1$. 
Note that~$\phi_\delta$ is defined on~$\delta\mathbb Z^2$ and~$\phi$ on~$\mathbb Z^2$. 
For a finite subgraph~$G$ of~$\delta\Z^2$ and boundary condition~$\xi$, we write~$\rcP^\xi_G = \rcP^\xi_{G,p_c,4}$.

For~$x\in\R^2$ and~$0<r<R$, we write~$B_r(x):=\Ll\{z\in\R^2:\:|z-x|_2\leq r\Rr\}$. 
Also recall that~$\Lambda_r:=[-r,r]^2$ and set~$\Ann(r,R):= \Lambda_R \setminus\Lambda_r$.
Let~$\Lambda_r(x)$ and~$\Ann_x(r,R)$ denote the translates of the above by~$x \in \bbR^2$. 
Finally, for~$\delta>0$, define~$\Lambda_r^\delta := \Lambda_r\cap \delta\Z^2$.
The use of Euclidean balls~$B_r(x)$ is particularly convenient for GFF computations while 
the~$L^\infty$ balls~$\Lambda_r(x)$ are commonly used in percolation arguments. 
We choose to use them both here so as to harmonise our proofs with the pre-existing literature, in particular with~\cite{duminil2022planar}.

For two families~$(f_i)_{i\in I}$ and~$(g_i)_{i\in I}$ of positive reals, write~$f\asymp g$ (resp.~$f\les g$ and~$f\ges g$ ) if there exist constants~$c,C\in(0,\infty)$ such that, for every~$i\in I$, we have~$cg_i\leq f_i\leq Cg_i$ (resp.~$f_i\leq C g_i$ and~$f_i\geq cg_i$). In most cases, the family~$I$ will be obvious from context and omitted. 

\subsection{Qualitative properties of the critical phase}

For a rectangle~$[0,n] \times [0,k]$ with~$n,k \geq1$, let~$\calH(n,k)$ be the event that the rectangle is crossed horizontally, i.e. that its left side~$\{0\} \times [0,k]$ is connected by a path of open edges contained in the rectangle to its right side~$\{n\}\times[0,k]$. We will use the following result.

\begin{proposition}[RSW \cite{beffara2012self,DumSidTas13}]\label{prop:RSW}
    For every~$\rho > 0$, there exists~$c = c(\rho) > 0$ such that for every~$n\geq 1$ and every boundary condition~$\xi$,
    \begin{equation}\label{eq:RSW} 
    \rcP_{\Lambda_{(\rho+1)n}}^\xi[\calH(\rho n,n)]\geq c.\tag{RSW}
    \end{equation}
\end{proposition}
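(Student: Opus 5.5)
This is the RSW theorem for $q=4$. The plan follows Duminil-Copin--Sidoravicius--Tassion \cite{DumSidTas13}, whose input is the continuity of the phase transition at $q=4$. Self-duality combined with a symmetric crossing argument already gives the square case. By the symmetry of $\Lambda_{2n}$ under a rotation by $\pi/2$, and since the squares crossing event is the complement of the dual vertical crossing, we get
\[
\rcP^{1}_{\Lambda_{2n}}[\calH(n,n)]\ge \tfrac12 .
\]
The same argument with the dual measure gives $\rcP^{0}_{\Lambda_{2n}}[\calH(n,n)]\le \tfrac12$ for the dual crossing, so under free boundary conditions the primal square crossing has probability at least $1/2$ up to the boundary effect. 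The weak point is exactly the free boundary condition, since by monotonicity in $\xi$ it suffices to treat $\xi=0$.

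The main steps, in order, are as follows.

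\textbf{Step 1.} Prove uniform square crossing bounds under free boundary conditions at distance. I would show that $\rcP^0_{\Lambda_{Cn}}[\calH(n,n)]\ge c$ uniformly in $n$. The tool is the DST dichotomy: either crossings of annuli under free boundary conditions at distance are bounded below uniformly, or the probabilities of such crossings decay stretched-exponentially. In the second case, a renormalization argument shows that $\rcP^0$ crosses a big box with exponentially small probability. Duality then gives exponentially fast long dual connections under wired conditions, so the wired measure has exponential decay of $\omega^*$-connectivity. That contradicts the polynomial lower bound on $\rcP^1[0\lra\partial\Lambda_n]$, which holds because the phase transition is continuous at $q=4$ (i.e.\ $\phi^1_{\Z^2,p_c}=\phi^0_{\Z^2,p_c}$). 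The continuity is what must be invoked here; for $q=4$ it was proved in \cite{DumSidTas13} (alternatively via the Bethe ansatz/6V computation). This dichotomy is the main obstacle. The heart of it is a gluing argument, which uses the FKG inequality together with the domain Markov property. It must bootstrap from square to annulus crossings with boundary conditions pushed away, and it must control the loss caused by boundary influence.

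\textbf{Step 2.} Pass from squares to long rectangles. Once square crossings and circuits in annuli $\Ann(n,2n)$ are uniformly likely under any boundary condition placed outside $\Lambda_{3n}$, rectangles $\calH(\rho n,n)$ follow. Place $O(\rho)$ overlapping squares and glue their crossings with vertical crossings using FKG, giving probability at least $c^{O(\rho)}$. The boundary condition on $\Lambda_{(\rho+1)n}$ may be adjacent to the rectangle on the top and bottom sides. I would handle this by comparing with $\xi=0$ and using annulus circuits under the free measure in slightly smaller boxes; alternatively, apply Step 1 in boxes of size $n/2$ centered inside, and use the domain Markov property to condition on the outside. This yields a constant $c(\rho)>0$, uniform in $n$ and $\xi$, as claimed in \eqref{eq:RSW}.
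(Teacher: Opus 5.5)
Your overall plan (reduce to $\xi=0$ by monotonicity, get crossing bounds under free boundary conditions at distance from the Duminil-Copin--Sidoravicius--Tassion dichotomy with a $q=4$ input, then glue) is the route behind this statement. The paper does not reprove it; it quotes it from \cite{beffara2012self,DumSidTas13}. Since you treat the dichotomy as a black box, the one inference you actually carry out is the exclusion of its second alternative, and as written it fails.

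You obtain exponential decay of $\omega^*$-connections under $\phi^1$ and claim this contradicts a polynomial lower bound on $\phi^1[0\lra\partial\Lambda_n]$ that holds ``because the phase transition is continuous''. Neither part is correct:
\begin{itemize}
\item Exponential decay of dual connections under $\phi^1$ gives, by a Peierls argument on dual circuits around $\Lambda_m$, $\phi^1[0\lra\infty]>0$. Hence $\phi^1[0\lra\partial\Lambda_n]\ge c$, which is compatible with any polynomial lower bound.
\item A bound $\phi^1[0\lra\partial\Lambda_n]\ge c/n$ holds for every $q\ge1$, by the same duality argument as your square estimate plus a union bound. In particular it holds for $q>4$, where the second alternative is what actually happens, so it cannot be what excludes that alternative.
\end{itemize}
The $q=4$ input must enter as an upper bound. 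One option is $\phi^1_{\Z^2,p_c}[0\lra\infty]=0$, which the Peierls argument above contradicts. The other is $\phi^0_{\Z^2,p_c}=\phi^1_{\Z^2,p_c}$, which transfers the $c/n$ lower bound to the free measure and contradicts its exponential decay directly, with no detour through duality. For this, the second alternative must give decay for the infinite-volume free measure, not only for finite-volume free measures (these increase with the domain). In \cite{DumSidTas13} the $q\le 4$ input is exactly the sub-exponential decay of the free two-point function, obtained from parafermionic observables.

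Two smaller corrections.
\begin{itemize}
\item Under free boundary conditions, duality gives a primal square crossing probability at most $\tfrac12$, not at least $\tfrac12$ ``up to the boundary effect''. That effect is not small a priori (for $q>4$ it sends the crossing probability to $0$), and controlling it is the whole content of Step~1.
\item In Step~2, the rectangle $[0,\rho n]\times[0,n]$ lies at distance at least $\min(1,\rho)\,n$ from $\partial\Lambda_{(\rho+1)n}$, so no boundary condition is adjacent to it. On the other hand, the gluing you describe is insufficient. Horizontal crossings of overlapping squares can only be linked by top-to-bottom crossings of their overlap, which is a rectangle crossed in its long direction. That is the squares-to-rectangles step of RSW and does not follow from FKG. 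Glue instead circuits in annuli of radius of order $\min(1,\rho)\,n$, centred along the rectangle at suitable spacing so that consecutive circuits must intersect. These circuit bounds, uniform under free boundary conditions at distance, are what Step~1 has to provide.
\end{itemize}
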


 For~$1 \leq r \leq R$, define the scale-to-scale versions of~$\pi_1$,~$\pi_2$ and~$\Delta$ as 
\begin{align}
    \pi_1(r,R) &:= \rcP_{}\big[\Lambda_r\longleftrightarrow \partial\Lambda_R\big],\\
        \pi_2(r,R) &:= \rcP_{}\big[\Lambda_r\longleftrightarrow \partial\Lambda_R\text{ and } \Lambda_r \stackrel{*}{\longleftrightarrow}\Lambda_R^c\big],\\
    \Delta(r,R)&:=\rcP_{\Lambda_R}^1\big[\calH(r,r)\big]-\rcP_{\Lambda_R}^0\big[\calH(r,r)\big]. \label{eq:Delta_def}
\end{align}
Also write~$\pi_1^\delta ( r,R) := \pi_1 ( r/\delta,R/\delta)$ for the analog of~$\pi_1$ on the rescaled lattice~$\delta\bbZ^2$. 
The same notation applies to~$\pi_2$ and~$\Delta$. 

These quantities obey a quasi-multiplicativity property stated below. 

\begin{proposition}[Quasi-multiplicativity]\label{p.pi_quasi-multi}
For every~$1\leq r\leq \rho\leq R$, 
\begin{align}
    \pi_1(r,R)&\asymp \pi_1(r,\rho)\pi_1(\rho,R), \label{eq:quasi_pi1}\\
     \pi_2(r,R)&\asymp \pi_2(r,\rho)\pi_2(\rho,R), \label{eq:quasi_pi2}\\
 \Delta(r,R)&\asymp  \Delta(r,\rho)\Delta(\rho,R). \label{eq:quasi_delta}
\end{align}
Moreover,~$\pi_1(R) \asymp \pi_1(1,R)$,~$\pi_2(R) \asymp \pi_2(1,R)$ and~$\Delta(R) \asymp \Delta(1,R)$.

Finally, there exists $c > 0$ such that, for any $1\leq r \leq R$, $\pi_2(r,R)  \leq \pi_1(r,R)^2\leq \pi_1(r,R)\big(\frac{r}R\big)^c$.
\end{proposition}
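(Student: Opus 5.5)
We prove the three quasi-multiplicativity statements by the standard RSW-based separation-of-arms technique, valid for any boundary conditions thanks to \eqref{eq:RSW}, and then deduce the comparisons with the point-to-box quantities and the final inequality.

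\emph{Upper bounds.} For \eqref{eq:quasi_pi1} and \eqref{eq:quasi_pi2} (assume $2r\le\rho\le R/2$; otherwise the statement follows from RSW with constants), the event $\{\Lambda_r\lra\partial\Lambda_R\}$ is contained in the intersection of $\{\Lambda_r\lra\partial\Lambda_{\rho/2}\}$, measurable in $\Lambda_{\rho/2}$, and $\{\Lambda_{2\rho}\lra\partial\Lambda_R\}$, measurable outside $\Lambda_{2\rho}$. Conditioning on the configuration outside $\Lambda_{\rho}$ and using the domain Markov property together with the mixing property implied by RSW (the ratio of the conditional law in $\Lambda_{\rho/2}$ to the unconditional one is bounded above and below uniformly in the boundary condition, by the standard coupling through a dual or primal circuit in $\Ann(\rho/2,\rho)$) gives $\pi_1(r,R)\les\pi_1(r,\rho/2)\pi_1(2\rho,R)$. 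RSW then gives $\pi_1(r,\rho/2)\asymp\pi_1(r,\rho)$ and $\pi_1(2\rho,R)\asymp\pi_1(\rho,R)$, by extending arms through annuli of bounded aspect ratio using FKG and a circuit in $\Ann(\rho/2,\rho)$. The same argument handles $\pi_2$, with the primal and dual arms treated together; the extension step now uses the well-separated-arms lemma.

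\emph{Lower bounds.} For the lower bounds we glue arms. Define well-separated versions of the arm events, in which the arms end at prescribed disjoint landing arcs on $\partial\Lambda_\rho$ with enough room around them. The key input is the separation lemma: conditionally on an arm event, the arms are well separated with probability bounded below, both at the outer end and at the inner end. This is proved by the usual iterative scale-by-scale argument using RSW crossings in all boundary conditions. Given well-separated arms inside and outside, RSW with FKG (for $\pi_1$) or with a conditioning on explored interfaces (for $\pi_2$, where primal and dual arms are glued separately by exploring from the sides) connects them at constant cost. For $\pi_2$ the monotonicity argument is subtler because the event is neither increasing nor decreasing. I would treat it by exploring the interface between the primal and dual arms and applying the domain Markov property, as in the Bernoulli percolation argument of Kesten adapted in \cite{duminil2022planar}. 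The comparisons $\pi_i(R)\asymp\pi_i(1,R)$ follow by forcing, at constant cost (finite energy), the edges near the origin to connect $0$ or $(\tfrac12,\tfrac12)$ to the arms from $\partial\Lambda_1$.

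\emph{The quantity $\Delta$.} I expect \eqref{eq:quasi_delta} to be the main obstacle, since $\Delta$ is not an arm probability. The plan is to write $\Delta(r,R)$ via the increasing coupling of $\phi^1_{\Lambda_R}$ and $\phi^0_{\Lambda_R}$: the difference is the probability that $\calH(r,r)$ holds in the top configuration but not the bottom one. This forces disagreement to propagate from $\partial\Lambda_R$ to $\Lambda_r$, and one conditions on the outermost disagreement circuits. Concretely, at scale $\rho$, I would explore from outside the first open circuit for the free measure and the first dual circuit for the wired measure in $\Ann(\rho,R)$. By the domain Markov property, on the event that these differ in the appropriate way, the conditional difference inside is comparable to $\Delta(r,\rho)$; the mixing statement shows this uniformly. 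The probability of that outer event is comparable to $\Delta(\rho,R)$. Both bounds rely on RSW in arbitrary boundary conditions to make the boundary influence comparable regardless of the exact shape of the circuits. This is the route of \cite{duminil2022planar}, where \eqref{eq:quasi_delta} and the comparison $\Delta(R)\asymp\Delta(1,R)$ are proven, and I would cite it there.

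\emph{Final inequality.} $\pi_2\le\pi_1^2$ is not immediate, since $\pi_2\le\pi_1$ is trivial. I would argue that the primal arm and the dual arm, conditionally on one another, are each comparable to one-arm events: conditioning on the dual arm, by the Markov property and FKG with free boundary on the dual side, the primal arm has probability at most $C\pi_1(r,R)$. This gives the bound up to a constant, which is all the $\asymp$-level statement requires. Finally, $\pi_1(r,R)\le(r/R)^c$ follows by iterating RSW: in each dyadic annulus there is a dual circuit with probability at least $c$, independently enough by the mixing property.
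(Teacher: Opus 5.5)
Your proposal is correct and follows the paper's approach: the paper cites \cite{duminil2022planar} for the three quasi-multiplicativity statements and for the comparisons with $\pi_1(R)$, $\pi_2(R)$, $\Delta(R)$, and obtains the final chain of inequalities from FKG and RSW. The one shortcut you miss is that $\pi_2(r,R)\le C\pi_1(r,R)^2$ follows in one line from the Harris--FKG inequality applied to the increasing primal-arm event and the decreasing dual-arm event, together with self-duality; no exploration of or conditioning on the dual arm is needed, and as you note the inequality should be read up to a multiplicative constant.
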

The quasi-multiplicativity of~$\pi_1$ and~$\pi_2$ is given by~\cite[Proposition~2.3]{duminil2022planar}; that of~$\Delta$ is given by~\cite[Theorem~1.6(ii)]{duminil2022planar}. The last property is a consequence of the FKG inequality and \eqref{eq:RSW}. 

The following mixing property can be found in \cite{duminil2022planar}.

\begin{proposition}[Mixing property]\label{prop:mixing}
There exist~$c_{\rm mix},C_{\rm mix}\in(0,\infty)$ such that for every~$r\le R/2$ and every two events~$A$ and~$B$ depending on edges in~$\Lambda_r$ and outside~$\Lambda_R$, respectively, 
\begin{equation}\label{eq:mixing}
 \big|\rcP[A\cap B]-\rcP[A]\rcP[B] \big|\le C_{\rm mix}(\tfrac rR)^{c_{\rm mix}} \rcP[A]\rcP[B].
\end{equation}
\end{proposition}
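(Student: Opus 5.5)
A natural route to the mixing property for the critical $q=4$ random-cluster measure $\phi$ on $\mathbb Z^2$ is to compare, on the annulus $\Ann(r,R)$, the law of the configuration inside $\Lambda_r$ under the different boundary conditions that the outside can induce. By the domain Markov property, conditionally on the edges outside $\Lambda_R$, the law inside $\Lambda_R$ is $\phi^\xi_{\Lambda_R}$ for a random boundary condition $\xi$. The whole effect of $B$ on $A$ therefore goes through $\xi$. It suffices to prove that for every event $A$ depending on edges in $\Lambda_r$ and every $\xi$,
\begin{equation}
\big|\phi^\xi_{\Lambda_R}[A]-\phi[A]\big|\le C(\tfrac rR)^{c}\,\phi[A].
\end{equation}
Indeed, integrating this bound against $\phi[\,\cdot\,|B]$ gives $|\phi[A\cap B]-\phi[A]\phi[B]|\le C(r/R)^c\phi[A]\phi[B]$. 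By monotonicity of the boundary conditions, for increasing $A$ we have $\phi^0_{\Lambda_R}[A]\le\phi^\xi_{\Lambda_R}[A]\le\phi^1_{\Lambda_R}[A]$, and $\phi$ lies in between as well. The key is then to show
\begin{equation}
\phi^1_{\Lambda_R}[A]\le \big(1+C(\tfrac rR)^c\big)\phi^0_{\Lambda_R}[A]
\end{equation}
uniformly in events $A$ on $\Lambda_r$, including non-increasing ones.

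For this comparison I would use the standard coupling and exploration argument. Choose scales $r=\rho_0<\rho_1<\dots<\rho_k=R/2$ with $\rho_{j+1}=2\rho_j$, so that $k\asymp\log_2(R/r)$. In each annulus $\Ann(\rho_j,2\rho_j)$, \eqref{eq:RSW} gives, uniformly in boundary conditions, a probability at least $c>0$ of a dual-open circuit. Under the free measure it also gives probability at least $c$ of an open circuit. Explore $\omega$ from $\partial\Lambda_R$ inward under a monotone coupling of $\phi^1_{\Lambda_R}$ and $\phi^0_{\Lambda_R}$ (or of $\phi^\xi_{\Lambda_R}$ and $\phi^0_{\Lambda_R}$). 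The goal is to discover an outermost circuit $\Gamma$, open in the lower configuration and hence in both, with the two configurations agreeing on edges outside $\Gamma$ up to wiring. Inside $\Gamma$ both conditional laws are then $\phi^1_{\mathrm{int}\Gamma}$, so they coincide. The cleanest version runs two explorations on nested annuli: first find a dual circuit in the upper configuration, which forces free conditions inside under both, then an open circuit in the lower one. Each annulus gives a success probability at least $c$ independently of the past, since \eqref{eq:RSW} holds for every boundary condition. The probability that no coupling occurs before reaching $\Lambda_r$ is at most $(1-c)^{k}\le (r/R)^{c'}$. This gives the additive bound $|\phi^1_{\Lambda_R}[A]-\phi^0_{\Lambda_R}[A]|\le (r/R)^{c'}$.

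The main obstacle is upgrading this additive error to the multiplicative one $C(r/R)^c\phi[A]$, which is needed for events of tiny probability such as arm events. For this I would condition on the configuration inside $\Lambda_{\sqrt{rR}}$, or better, run the exploration from outside only down to an intermediate scale $\rho=\sqrt{rR}$. On the event that coupling succeeds in $\Ann(\rho,R/2)$, whose failure probability is at most $(\rho/R)^{c'}$, the conditional probability of $A$ is the same under both measures. On the failure event one must show that $\phi[A\mid\text{failure}]\le C\phi[A]$. This follows because $A$ lives in $\Lambda_r$ and, by the same RSW circuit argument applied in $\Ann(r,\rho)$, the conditional law on $\Lambda_r$ given anything outside $\Lambda_\rho$ is comparable to $\phi$ up to a factor bounded uniformly (Radon--Nikodym comparability via circuits at a uniformly positive rate). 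Establishing this uniform "bounded ratio" statement, $\phi^\xi_{\Lambda_\rho}[A]\asymp\phi[A]$ for $\rho\ge 2r$, is the delicate step. I would prove it by a finite-energy/surgery argument: by \eqref{eq:RSW}, with probability bounded below there is an open circuit in $\Ann(r,2r)$. Conditioning on the outermost such circuit gives wired conditions inside, independent of $\xi$, and the probability of this circuit is bounded below uniformly in $\xi$. Combining gives $\phi^\xi_{\Lambda_R}[A]=(1+O((r/R)^{c}))\phi[A]$ and hence \eqref{eq:mixing}.
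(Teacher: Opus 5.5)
The paper does not prove this statement itself. It imports it from \cite{duminil2022planar}, where the ratio form even holds with rate $\Delta(r,R)$ (recalled here as \eqref{eq:iota0}).

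Your reductions are sound, and the skeleton is the standard one.
\begin{itemize}
\item By the domain Markov property, it suffices to bound $|\phi^\xi_{\Lambda_R}[A]-\phi[A]|$ by $C(r/R)^c\phi[A]$ uniformly in $\xi$.
\item A monotone coupling of $\phi^\xi_{\Lambda_R}$ and $\phi^0_{\Lambda_R}$, exploring inwards until it finds an open circuit of the lower configuration, fails with probability at most $(r/R)^c$. It handles non-monotone $A$ directly; the free/wired sandwich is unavailable for such $A$.
\item On the failure event, the conditional law of either configuration on $\Lambda_{2r}$ is a mixture of measures $\phi^\zeta_{\Lambda_{2r}}$.
\end{itemize}
So the multiplicative bound follows once you know the comparability estimate $\sup_\zeta\phi^\zeta_{\Lambda_{2r}}[A]\le C\,\phi[A]$ for \emph{every} event $A$ depending on edges of $\Lambda_r$.

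The gap is that comparability estimate. The argument you sketch for it does not prove it and is in fact circular: a separating circuit, however likely, only yields total-variation (additive) closeness.

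Concretely, let $\Gamma$ be the outermost open circuit in $\Ann(r,2r)$. Then $\phi^\xi[A,\,\Gamma\text{ exists}]=\sum_\gamma\phi^\xi[\Gamma=\gamma]\,\phi^1_{\mathrm{int}\gamma}[A]$. The factor $\phi^1_{\mathrm{int}\gamma}[A]$ is independent of $\xi$ only once $\gamma$ is fixed. The law of $\Gamma$ does depend on $\xi$, and $\gamma\mapsto\phi^1_{\mathrm{int}\gamma}[A]$ is not controlled a priori for a general (non-monotone, possibly very rare) event $A$.

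Your observation that $\phi^\xi[\Gamma\text{ exists}\mid\omega_{|\Lambda_r}]\ge c$ (by \eqref{eq:RSW}, uniformly in boundary conditions) does give $\phi^\xi[A]\asymp\phi^\xi[A,\,\Gamma\text{ exists}]$. But you are then left needing one of two things:
\begin{itemize}
\item a bounded Radon--Nikodym derivative between the laws of $\Gamma$ under different $\xi$; or
\item $\phi^1_{\mathrm{int}\gamma}[A]\asymp\phi^1_{\mathrm{int}\gamma'}[A]$ uniformly over circuits $\gamma,\gamma'$ in $\Ann(r,2r)$.
\end{itemize}
Since $\phi^1_{\mathrm{int}\gamma'}$ restricted to $\mathrm{int}\gamma$ is a mixture of measures $\phi^\zeta_{\mathrm{int}\gamma}$, both are the same comparability problem at the same scale.

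What is missing is an input that controls \emph{ratios}, not total variation. For example, suppose $\xi'$ is obtained from $\xi$ by merging just two boundary arcs $P_1,P_3$ of a domain $D$. Then $\mathrm{d}\phi^{\xi'}_D/\mathrm{d}\phi^\xi_D$ is proportional to $q^{-\one[P_1,P_3\text{ not connected}]}$, hence lies in $[q^{-1},q]$. This is what the four-petal flower domains recalled in Section~\ref{sec:basics} exploit, though one must still handle the randomness of the domain.

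As written, your proposal establishes only the additive version of \eqref{eq:mixing}, not the multiplicative one.
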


\subsection{Loop representation}

The medial lattice of $\bbZ^2$ is the graph with vertices placed at the centres of the edges of $\bbZ^2$ 
and two vertices being connected by an edge if they are at distance $1/\sqrt 2$ of each other. 
Equivalently, the vertices of the medial lattice are the intersection points between primal and dual edges,
and its edges are segments that do not intersect the primal or dual lattices. 


Fix a percolation configuration~$\omega$ on $\bbZ^2$ and recall its dual configuration $\omega^*$. The loop representation $\lpcfg(\omega)$ of $\omega$ is a partition of the edges of the medial lattice into loops and bi-infinite paths in such a way that no loop or path intersects edges of $\omega$ or $\omega^*$ --- see Figure~\ref{fig:loops} for an example. 
When either $\omega$ or $\omega^*$ contains no infinite cluster (as is the case here a.s.), $\lpcfg(\omega)$ is formed entirely of finite loops. 

\begin{figure}
\begin{center}
\includegraphics[width = 0.46\textwidth, page = 2]{figures/loops.pdf}\quad
\includegraphics[width = 0.46\textwidth, page = 3]{figures/loops.pdf}
\caption{{\em Left:} A percolation configuration in red and its dual in blue. The associated loop representation (in orange) is the set of interfaces between the primal and dual clusters. 
{\em Right:} A six-vertex configuration on the medial lattice is obtained by orienting  each loop of the loop representation.
The resulting height function is constant on the faces of the medial lattice; the orange loops are its level lines.}
\label{fig:loops}
\end{center}
\end{figure}

The loop representation plays a special role in the BKW correspondence.
When $q=4$, the correspondence is particularly simple: associate to $\omega$ the six-vertex configurations obtained by orienting uniformly the loops of $\calL(\omega)$ in either clockwise or counter-clockwise direction. See the right diagram of Figure~\ref{fig:loops} for an example
and~\cite{DumManTas16b} for more details. 

We extend the notation $\calL = \calL(\omega)$ to percolation configurations on the rescaled lattice $\delta \bbZ^2$. 
For a set of points~$X=\{x_1,\dots,x_k\}\subset \R^2$ and~$\eps>0$, set
\begin{align}\label{e.L_X,eps=}
	\calL_{X,\eps} &:= \big\{\text{$\lo\in\calL$ that do not intersect any of the balls~$B_\eps(x_i)$ for~$1\le i\le k$}\big\},\\
	\calL_{X,\eps}^{\rm odd}&:=\big\{\lo\in\calL_{X,\eps}\text{ surrounding an odd number of  balls~$B_\eps(x_i)$ for~$1\le i\le k$}\big\}. \label{e.L^odd_X,eps=}
\end{align}
These sets of loops will be useful when relating GFF observables as in \eqref{eq:GFF_observables} to events in the random-cluster model.

\subsection{Mixing conditioned on one-arm events}
Using the notation above for $X=\{0\}$, define the measure 
\begin{equation}
\phi_\delta\big[\cdot\,\big|\,\calL_{\{0\},\eps}^{\rm odd}=\emptyset\big] = \lim_{R\to\infty} \phi_{\Lambda^\delta_R}^1\big[\cdot\,\big|\,\calL_{\{0\},\eps}^{\rm odd}=\emptyset\big].
\end{equation}
This is a coarse-grained version of the so-called {\em Incipient Infinite Cluster} (IIC) in which there exists either an open or a closed path from~$ B_\eps(0)$ to infinity. The existence of the limit is justified in \cite{basu2017kesten, garban2018scaling, kesten1986incipient, jarai2003invasion}, as is the following quantitative mixing property. 

\begin{lemma}\label{lem:mixing_IIC}
There exist constants~$C,c>0$ such that for every~$R/2 \ge r  \ge \ep \ge \delta>0$, every event~$A$ depending on~$\Lambda_r^\delta$ and every boundary condition~$\xi$ on~$\Lambda_R^\delta$, 
\begin{equation*}
\big| \phi_{\Lambda_R^\delta}^\xi[A\,|\,\calL_{\{0\},\eps}^{\rm odd}=\emptyset] - \phi_\delta[A\,|\,\calL_{\{0\},\eps}^{\rm odd}=\emptyset] \big|\le C\big(\tfrac{r}{R}\big)^c.
\end{equation*}
\end{lemma}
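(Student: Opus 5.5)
The event $\{\calL_{\{0\},\eps}^{\rm odd}=\emptyset\}$ says that no loop avoiding $B_\eps(0)$ surrounds it. Equivalently, there is either an open primal path or an open dual path from $B_\eps(0)$ to $\partial\Lambda_R^\delta$. This is a ``monochromatic one-arm'' event whose two colours play symmetric roles by self-duality. I would follow the standard Kesten-type coupling argument for the IIC, as in \cite{basu2017kesten,garban2018scaling}. First, reduce to comparing two boundary conditions, or two radii $R<R'$: the limit $\phi_\delta[\cdot\,|\,\calL_{\{0\},\eps}^{\rm odd}=\emptyset]$ is then a limit of a Cauchy sequence. So it suffices to show that for any $\xi,\xi'$ on $\Lambda_R^\delta$, $\Lambda_{R'}^\delta$ the conditional laws restricted to $\Lambda_r^\delta$ differ by at most $C(r/R)^c$.

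Introduce the $\log_2(R/r)$ dyadic annuli $\Ann(2^k r,2^{k+1}r)$ with $2^{k+1}r\le R$. In each annulus $A_k$ define a ``good'' event $G_k$: there exists an open circuit in $A_k$ together with a dual-open circuit in a slightly smaller annulus, both crossed by appropriate arms, such that conditioning on the outermost such circuits decouples the inside from the outside. The point is that the inside of an open circuit is distributed as the wired measure, and that of a dual circuit as the free one. Since the conditioning event allows either a primal or a dual arm, we cannot use a single circuit type; instead I would use the loop itself. The outermost loop of $\calL$ crossing $A_k$ and touching the arm from inside carries a wired or free condition depending on its orientation, which is symmetric by self-duality.

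The key estimate is that, conditionally on $\{\calL_{\{0\},\eps}^{\rm odd}=\emptyset\}$ and on anything outside $\Lambda_{2^{k+1}r}$, $G_k$ has probability at least some $c_0>0$ uniformly. This follows from \eqref{eq:RSW} and quasi-multiplicativity (Proposition~\ref{p.pi_quasi-multi}). Arm events conditioned on arms are handled via the usual separation-of-arms lemma, which gives well-separated landing zones on each scale with probability bounded below, using only RSW uniform in boundary conditions. Given good separation, one can glue the arm to circuits at a constant cost by FKG.

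With this estimate in hand, run an exploration from the outside inward in both measures simultaneously. At each scale, with conditional probability at least $c_0$, the explored interface yields the same domain Markov data for both measures, namely the same wired or free boundary on a common circuit, modulo a Radon--Nikodym factor bounded uniformly. Both conditioned measures then become the same law inside, because the remaining condition is just connection of $B_\eps(0)$ to that circuit. The probability that no coupling occurs in any of the $\log_2(R/r)/2$ scales is at most $(1-c_0)^{c\log(R/r)}=(r/R)^{c}$, which gives the bound.

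I expect the main obstacle to be the uniform lower bound on coupling at each scale. The sources of difficulty are the conditioning on a degenerate event of probability tending to $0$, the mixed primal/dual nature of the arm, and the requirement that the constants be uniform in $\eps\ge\delta$. I would handle it by first proving a separation lemma for the primal-or-dual arm event with constants independent of $\eps,\delta$, using only Proposition~\ref{prop:RSW} and Proposition~\ref{prop:mixing}.
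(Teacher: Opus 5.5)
Your overall scheme (dyadic scales, decoupling circuits, a per-scale coupling with success probability bounded below) is the classical Kesten argument behind the references the paper cites. The paper gives no proof of its own and cites those references instead. Your argument breaks at exactly the point you flag as the obstacle: the primal/dual colour.

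First, your $G_k$ as defined is disjoint from the conditioning event. An open circuit and a dual circuit both surrounding $B_\eps(0)$ are separated by a loop of $\calL^{\rm odd}_{\{0\},\eps}$, since no arm of either colour can cross both.

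More seriously, the claim that the explored interface carries ``the same wired or free boundary'' in both measures, with probability at least $c_0$ at each scale, is false. The colour of any admissible decoupling circuit is a single global variable: a primal circuit around $B_\eps(0)$ blocks every dual arm, and conversely. So the colour is not refreshed from scale to scale, and its law depends on $\xi$. Self-duality only exchanges $\xi$ with its dual on the dual lattice; it gives no symmetry for a fixed $\xi$.

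Concretely, define the loops in $\Lambda_R^\delta$ through the boundary condition: the boundary cluster $W$ is primal for $\xi=1$ and dual for $\xi=0$. This is also the situation in Proposition~\ref{prop:forget small loops}, where the domain is bounded by loops of $\calL_{\rm out}$. Suppose $W$ has no vertex in $B_{\eps+\delta}(0)$. Then the interface bounding the face of $W$ containing $B_\eps(0)$ stays within distance $\delta$ of $W$, so it misses $B_\eps(0)$ and surrounds it; that is, it belongs to $\calL^{\rm odd}_{\{0\},\eps}$. Hence under $\rcP^1_{\Lambda_R^\delta}[\,\cdot\,|\,\calL^{\rm odd}_{\{0\},\eps}=\emptyset]$ there is a.s.\ a primal arm from $B_{\eps+\delta}(0)$ to $\partial\Lambda_R$, and under the free measure a.s.\ a dual one. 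For $\xi=1$ versus $\xi=0$ your coupling therefore succeeds at no scale. With this convention, your opening reformulation as ``primal or dual arm to $\partial\Lambda_R$'' is also inaccurate.

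This cannot be repaired in the stated generality. Take $r=4\eps$ and $A=\{B_{2\eps}(0)\lra\partial\Lambda_{4\eps}\text{ inside }\Lambda_{4\eps}\}$.
\begin{itemize}
\item Under $\xi=1$, the above gives $\rcP^1_{\Lambda_R^\delta}[A\,|\,\calL^{\rm odd}_{\{0\},\eps}=\emptyset]=1$.
\item Under $\xi=0$, one has $\{B_\eps(0)\stackrel{*}{\lra}\partial\Lambda_R\}\subset\{\calL^{\rm odd}_{\{0\},\eps}=\emptyset\}\subset\{B_{\eps+\delta}(0)\stackrel{*}{\lra}\partial\Lambda_R\}$. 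The FKG inequality and \eqref{eq:RSW} then give a dual circuit in $\Lambda_{4\eps}\setminus B_{2\eps}(0)$, which prevents $A$, with conditional probability at least some $c>0$. This holds uniformly in $R\ge 8\eps$ and $\delta\le\eps$.
\end{itemize}
The two conditional probabilities thus stay at distance $c$ while $C(r/R)^c\to0$. So no argument proves the inequality for all events and all $\xi$ simultaneously.

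What your scheme can prove is the following.
\begin{itemize}
\item For boundary conditions of a fixed colour, the comparison with the corresponding one-colour limit. There the conditioning is a one-colour arm event (up to replacing $B_\eps$ by $B_{\eps+\delta}$), and the classical argument with circuits of that single colour applies.
\item For events measurable with respect to $\calL$, such as $\mathcal A_{F_0}(\calL)\one_{E_0}$ used in Proposition~\ref{prop:forget small loops}, the bound with a single reference measure. This additionally requires showing that the primal and dual limits agree on such events. By self-duality, that amounts to comparing the primal limit with its translate by half a lattice step relative to $B_\eps(0)$, which needs a separate argument.
\end{itemize}
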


The following property states that the restriction of~$\phi_\delta[\cdot\,|\,\calL_{\{0\},\eps}^{\rm odd}=\emptyset]~$ to~$B(2\ep)$ is absolutely continuous with respect to~$\phi_\delta$, with a uniformly bounded Radon-Nikodym derivative. 

\begin{lemma}\label{lem:mixing_IIC0}
There exists~$C>0$ such that for every~$\ep \ge \delta>0$ and every event~$A$ depending on~$B_{2\eps}(0)$,  
\begin{equation*}
\phi_\delta\big[A\,\big|\,\calL_{\{0\},\eps}^{\rm odd}=\emptyset\big]\le C\phi_\delta\Ll[A\Rr].
\end{equation*}
\end{lemma}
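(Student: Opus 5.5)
The plan is to write the conditional probability as a ratio, enlarge the conditioning event to one that is measurable with respect to edges far from $B_{2\eps}(0)$, decouple using Proposition~\ref{prop:mixing}, and compare the enlarged event with the original one using \eqref{eq:RSW} and quasi-multiplicativity.

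\textbf{Setup.} I work at a fixed finite $R$ and pass to the limit $R\to\infty$ at the end; all bounds will be uniform in $R$. By scale invariance of the lattice, I use Propositions~\ref{prop:mixing} and~\ref{p.pi_quasi-multi} on $\delta\Z^2$ with all lengths divided by $\delta$; their constants do not depend on $\delta$. Let $E_R:=\{\calL_{\{0\},\eps}^{\rm odd}=\emptyset\}$ in $\Lambda_R^\delta$. When $X=\{0\}$, a loop surrounding an odd number of balls is a loop surrounding $B_\eps(0)$, so $E_R$ says that no loop avoiding $B_\eps(0)$ surrounds it. Equivalently, there is either a primal open path or a dual open path from $B_\eps(0)$ to $\partial\Lambda_R$.

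\textbf{Step 1: enlarge the conditioning event.} Let $E'_R$ be the event that there is a primal or a dual open path from $\partial\Lambda_{4\eps}$ to $\partial\Lambda_R$ inside $\Ann(4\eps,R)$. Then $E_R\subset E'_R$, since any such path starting in $B_\eps(0)$ must cross $\Ann(4\eps,R)$. Moreover $E'_R$ depends only on edges outside $\Lambda_{4\eps}$, while $A$ depends on edges in $B_{2\eps}(0)\subset\Lambda_{2\eps}$. Proposition~\ref{prop:mixing} therefore applies with $r=2\eps$ and outer scale $4\eps$ (so $r\le R/2$ holds), and gives
\[
\phi_\delta[A\cap E_R]\le \phi_\delta[A\cap E'_R]\le (1+C_{\rm mix}2^{-c_{\rm mix}})\,\phi_\delta[A]\,\phi_\delta[E'_R].
\]
Here I use the infinite-volume measure. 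The finite-volume wired measure in the definition of the limiting conditional law is handled by the same argument with the corresponding finite-volume arm events, or by first taking $R\to\infty$, since these arm events converge.

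\textbf{Step 2: compare $E'_R$ with $E_R$.} By a union bound and self-duality,
\[
\phi_\delta[E'_R]\le 2\,\phi_\delta[\Lambda_{4\eps}\lra\partial\Lambda_R]\asymp\pi_1^\delta(4\eps,R).
\]
In the other direction, $E_R$ contains the primal one-arm event from $B_\eps(0)$, and $\Lambda_\eps\subset B_{\sqrt2\eps}(0)$, so
\[
\phi_\delta[E_R]\ge \phi_\delta\big[\Lambda_{\eps/\sqrt2}\lra\partial\Lambda_R\big]\ges \pi_1^\delta(\eps,4\eps)\,\pi_1^\delta(4\eps,R).
\]
The last inequality is the quasi-multiplicativity \eqref{eq:quasi_pi1}, after adjusting $\eps/\sqrt2$ to $\eps$, which costs only a constant by \eqref{eq:RSW}. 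Also $\pi_1^\delta(\eps,4\eps)\ge c$ by \eqref{eq:RSW}, because crossing a bounded-aspect-ratio annulus has probability bounded below. Hence $\phi_\delta[E'_R]\le C\phi_\delta[E_R]$.

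\textbf{Conclusion and main obstacle.} Dividing the bound of Step 1 by $\phi_\delta[E_R]$ and using Step 2 gives $\phi_\delta[A\mid E_R]\le C\phi_\delta[A]$ uniformly in $R$, $\eps$ and $\delta$; letting $R\to\infty$ yields the claim. The delicate point is the bookkeeping between the finite-volume wired measures and the infinite-volume measure in Step 1, that is, making sure that the mixing estimate and the comparison of arm events hold uniformly as $R\to\infty$. I would handle this by applying mixing to the infinite-volume measure with arm events to $\partial\Lambda_R$ and invoking the existence of the limit stated before Lemma~\ref{lem:mixing_IIC}.
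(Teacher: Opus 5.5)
Your proof is correct and follows the paper's argument: you enlarge the conditioning event to the analogous arm/odd-loop event at scale $4\eps$, which is measurable with respect to edges away from $A$. You then decouple it from $A$ with the mixing property, and compare the enlarged and original conditioning events by RSW. The only difference is that you work with the infinite-volume measure and arm events to $\partial\Lambda_R$ rather than with the finite-volume wired measures; identifying the limit then takes one extra step, namely averaging over boundary conditions outside $\Lambda_R$ and invoking Lemma~\ref{lem:mixing_IIC}, which is harmless.
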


We include the brief proof of this statement.

\begin{proof} 
Fix~$\delta$, $\eps$ and~$A$ as in the statement. All constants below are independent of these choices. The inclusion~$\{\calL_{\{0\},\eps}^{\rm odd}=\emptyset\}\subset \{\calL_{\{0\},4\eps}^{\rm odd}=\emptyset\}$ implies that 
\begin{equation}
	\phi_\delta[A\,|\,\calL_{\{0\},\eps}^{\rm odd}=\emptyset]
	=\lim_{N\rightarrow\infty}\frac{\phi_{\Lambda^\delta_N}^1[A,\calL_{\{0\},\eps}^{\rm odd}=\emptyset]}{\phi_{\Lambda^\delta_N}^1[\calL_{\{0\},\eps}^{\rm odd}=\emptyset]}
	\le\lim_{N\rightarrow\infty}\frac{\phi_{\Lambda^\delta_N}^1[A,\calL_{\{0\},4\eps}^{\rm odd}=\emptyset]}{\phi_{\Lambda^\delta_N}^1[\calL_{\{0\},\eps}^{\rm odd}=\emptyset]}.
\end{equation}
The event~$\{\calL_{\{0\},4\eps}^{\rm odd}=\emptyset\}$ only depends on the edges in~$B(4\ep)^c$. 
As such,  the mixing property~\eqref{eq:mixing} implies that 
\begin{equation}
\phi_{\Lambda^\delta_N}^1[A,\calL_{\{0\},4\eps}^{\rm odd}=\emptyset] \leq C_{\rm mix}\phi_{\Lambda^\delta_N}^1[A]\phi_{\Lambda^\delta_N}^1[\calL_{\{0\},4\eps}^{\rm odd}=\emptyset].
\end{equation}
Moreover, a direct application of the Russo--Seymour--Welsh theorem~\eqref{eq:RSW}, gives the existence of an absolute constant~$C_1 > 0$ such that 
\begin{equation}
\phi_{\Lambda^\delta_N}^1[\calL_{\{0\},4\eps}^{\rm odd}=\emptyset] \leq C_1 \phi_{\Lambda^\delta_N}^1[\calL_{\{0\},\eps}^{\rm odd}=\emptyset].
\end{equation}
We conclude that 
\begin{equation}
\phi_\delta[A\,|\,\calL_{\{0\},\eps}^{\rm odd}=\emptyset]
\le C_1C_{\rm mix}\lim_{N\rightarrow\infty}\phi_{\Lambda^\delta_N}^1[A] 
=C_1C_{\rm mix}\phi_\delta[A],
\end{equation}
as required. 
\end{proof}

\subsection{Tightness for the number of macroscopic loops}
For~$\eps,\eta,\lambda>0$, set 
\begin{equation*}
E(\eps,\eta,\lambda):=\{\text{there exist more than~$\lambda/\eta^2$ loops of diameter at least~$\eta \eps/2$ intersecting~$B_\eps(0)$}\}.
\end{equation*}

\begin{lemma}[Contribution of loops of given diameter]\label{lem:small loops}
There exist~$C,c\in(0,\infty)$ such that for every~$\lambda>0$,~$0<\eta<1$, and~$0<\delta<\eps$,
\begin{equation*}
\phi_\delta[E(\eps,\eta,\lambda)]\le C\exp(-c\lambda).
\end{equation*}
\end{lemma}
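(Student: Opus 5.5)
Since $\delta<\eps$ but the lattice may be much finer, I would cover $B_\eps(0)$ by order $\eta^{-2}$ boxes and bound the number of large loops through each box separately, using the BK-type/RSW multi-scale argument standard for random-cluster interfaces.

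\emph{Covering.} Cover $B_\eps(0)$ by $K\le C_0\eta^{-2}$ boxes $\Lambda_{\eta\eps/16}(y_j)$, with centres $y_j$ on a grid of mesh $\eta\eps/16$. A loop of diameter at least $\eta\eps/2$ that intersects $B_\eps(0)$ meets some box $\Lambda_{\eta\eps/16}(y_j)$ and also exits the larger box $\Lambda_{\eta\eps/8}(y_j)$. Let $N_j$ be the number of distinct loops that intersect $\Lambda_{\eta\eps/16}(y_j)$ and exit $\Lambda_{\eta\eps/8}(y_j)$. On $E(\eps,\eta,\lambda)$ we have $\sum_j N_j>\lambda/\eta^2$.

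\emph{Exponential tail per box.} The key claim is that there exist $C_1,c_1$, uniform in $\delta$, $y_j$ and the scale $r=\eta\eps/16\ge$ (anything larger than $\delta$), with
\[
\phi_\delta[N_j\ge k]\le C_1e^{-c_1k}.
\]
To prove it, note that distinct loops crossing the annulus $\Ann_{y_j}(r,2r)$ are nested across the annulus. More precisely, any crossing loop produces two arms (one primal, one dual) across the annulus. Having $k$ disjoint loops crossing therefore yields $2k$ crossings of the annulus that alternate between primal and dual. By \eqref{eq:RSW}, uniformly in boundary conditions, there is probability at least $c>0$ that a circuit in a thin sub-annulus blocks further crossings.

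The standard way to turn this into exponential decay is to explore the crossings one at a time from, say, a fixed angular sector, conditioning on what has been revealed. The unexplored region then carries some boundary condition. Using \eqref{eq:RSW} in the remaining domain (for instance through the planar alternating-arms bound via the domain Markov property and FKG), each additional pair of alternating arms costs a factor $1-c$. This gives $\phi_\delta[N_j\ge k]\le(1-c)^{k/2}$.

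\emph{Summation.} I would not use a union bound, since with $K$ boxes a union bound only controls $\max_j N_j$, not the sum. Instead I would bound exponential moments: $\bbE[e^{tN_j}]\le C_2$ for small $t>0$, uniformly.

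Independence between boxes fails, so I would split the grid into a bounded number $M$ of sublattices whose boxes are at mutual distance much larger than their size. Within one sublattice, the events concern disjoint annuli, and the domain Markov property plus the uniform-in-boundary-condition tail above gives a product bound
\[
\bbE\Big[\prod_{j\in S}e^{tN_j}\Big]\le C_2^{|S|}.
\]
This follows by conditioning on everything outside one annulus at a time, because the tail bound holds for every boundary condition. Hölder's inequality over the $M$ sublattices then yields $\bbE\big[e^{(t/M)\sum_jN_j}\big]\le C_2^{K}$. Markov's inequality gives
\[
\phi_\delta[E]\le C_2^{C_0\eta^{-2}}e^{-t\lambda/(M\eta^2)}\le C\exp(-c\lambda)
\]
for $\lambda$ large. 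The statement is trivial for small $\lambda$ after adjusting $C$.

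\emph{Main obstacle.} The delicate step is the per-box exponential tail uniformly over boundary conditions. Alternating arm counts require care because RSW for random-cluster measures is only uniform once the boundary is at distance comparable to the scale. The plan is to handle this by using the annulus $\Ann(r,2r)$ well inside the conditioning region and applying \eqref{eq:RSW} with the worst boundary condition at each exploration step.
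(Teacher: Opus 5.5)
Your proposal follows the paper's proof: you cover $B_\eps(0)$ by $O(\eta^{-2})$ boxes at scale $\eta\eps$, dominate each per-box crossing count by a geometric variable via \eqref{eq:RSW} uniformly in the outside configuration, split the boxes into finitely many classes of disjoint boxes, and conclude with a Chernoff/large-deviation bound (your H\"older step replaces the paper's union bound over the 9 classes, which makes no difference). One small repair is needed for the sequential conditioning step: define the per-box count, as the paper does, as half the number of loop crossings of $\Ann_{y_j}(r,2r)$, which dominates your $N_j$ and is determined by the configuration near that annulus, rather than as the number of distinct loops, which is not a local quantity.
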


\begin{proof}
It suffices to consider the case~$\eps=1$ since the general case follows by substituting~$(\eps, \eta\eps)$ for~$(1, \eta)$.

Let~$N_0$ be half the number of crossings from~$\Lambda_{\eta/8}$ to~$\Lambda_{\eta/4}^c$ in the union of loops in~$\calL$  (each loop crosses the annulus an even number of times, so~$N_0$ is an integer).
Having~$N_0 \geq k$ implies the existence of~$k$ distinct clusters in~${\rm Ann}(\tfrac18\eta, \tfrac14\eta)$ crossing the annulus from its inside to outside boundary. 

Then the~\eqref{eq:RSW} property implies the existence of a constant~$u>0$ independent of~$\eta$ and~$\delta$ such that, 
conditioned on the configuration in~$\Lambda_{\eta/4}^c$,~$N_0$ is stochastically dominated by a geometric random variable of parameter~$u>0$. 


For~$x\in B_1(0)\cap \tfrac14\eta\mathbb Z^2$, let~$N_x$ be obtained by translating~$N_0$ by~$x$. 
That is,~$N_x$ is half the number of crossings of~${\rm Ann}_x(\tfrac18\eta, \tfrac14\eta)$ in~$\calL$. 
Then, any loop of diameter at least~$\eta/2$ contributes to at least one variable~$N_x$. 
As such, 
\begin{equation}
E(1,\eta,\lambda) \subset \big\{\sum_{x }N_x\ge \lambda /\eta^{2}\big\}.
\end{equation}

We may partition~$B_1(0)\cap \tfrac14\eta\mathbb Z^2$ into 9 sets~$I_1,\dots,I_9$ such that, for each~$i = 1,\dots, 9$, the boxes~$\Lambda_{\eta/4}(x)$ with~$x \in I_i$ are disjoint (except possibly on their boundaries). Moreover, each set~$I_i$ contains at most~$16\eta^{-2}$ points. 
It follows that, for each~$i$, the variables 
$N_x$ for~$x\in I_i$ may be stochastically dominated by independent geometric random variables~$\mathbf N_x$ of parameter~$u>0$.

We conclude that
\begin{equation*}
\phi_\delta[E(1,\eta,\lambda)]\le \sum_{i=1}^9\phi_\delta\Big[\sum_{x\in I_i}N_x\ge \lambda/(9\eta^2)\Big]\le  \sum_{i=1}^9\mathbb P \Big[\sum_{x\in I_i}\mathbf N_x\ge \lambda/(9\eta^2)\Big].
\end{equation*}
The result follows from large deviation theory applied to sums of independent geometric variables.
 \end{proof}

\subsection{A technical lemma for quasi-multiplicative sequences} 
The following easy lemma will be used repeatedly.

\begin{lemma}\label{l.exp_quasi_multi}
Let~$\upsilon:[0,\infty)^2\rightarrow (0,\infty)$ be a function that is increasing in the first variable, decreasing in the second and that 
 satisfies the quasi-multiplicativity relation
\begin{equation*}
    \upsilon(r,\rho)\upsilon(\rho,R) \asymp \upsilon(r,R),\quad\text{for every }0< r\leq \rho\leq R.
\end{equation*}
If there is some constant~$\varsigma>0$ such that
\begin{align}\label{e.limlimlogupsilon/log=alpha}
    \lim_{\eps\to0}\liminf_{N\to\infty} \frac{\log\upsilon(\eps N,N)}{\log \eps} = \lim_{\eps\to0}\limsup_{N\to\infty} \frac{\log\upsilon(\eps N,N)}{\log \eps} = \varsigma,
\end{align}   
then for any positive sequences~$(r_n)_{n\in\N}$ and~$(R_n)_{n\in\N}$ with~$\frac{R_n}{r_n} \to \infty$ as~$n \to \infty$, 
\begin{align}\label{e.limlogupsilon/log=alpha}
    \lim_{n\to\infty}\frac{\log\upsilon(r_n,R_n)}{\log(R_n/r_n)} =-\varsigma.
\end{align}
In particular,
\begin{equation}\label{e.limlogupsilon/log(1,N)=alpha}
\lim_{N\to\infty}\frac{\log\upsilon(1,N)}{\log N}=-\varsigma.
\end{equation}
\end{lemma}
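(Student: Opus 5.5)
The plan is to decompose the ratio $R_n/r_n$ into a product of factors $1/\eps$, and to use quasi-multiplicativity to turn $\log\upsilon$ into an approximately additive function of those factors.

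Fix $\eta>0$. By \eqref{e.limlimlogupsilon/log=alpha}, choose $\eps\in(0,1)$ small enough that:
\begin{itemize}
\item $C^{\pm1}$-type constant terms are negligible, namely $|\log C|\le \eta|\log\eps|$, where $C$ is the constant in the quasi-multiplicativity relation;
\item there is $N_0$ such that for all $N\ge N_0$,
\begin{equation*}
(\varsigma-\eta)\log\eps\ \ge\ \log\upsilon(\eps N,N)\ \ge\ (\varsigma+\eta)\log\eps .
\end{equation*}
\end{itemize}
A preliminary point concerns the first argument $r_n$. The condition $N\ge N_0$ requires the scales to be large. If $r_n$ is not bounded below, I will instead work at scales $N$ of order $r_n\eps^{-j}$ with $j\ge1$. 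This is not automatically at least $N_0$, so some care is needed. The cleanest fix is to isolate a bounded number of initial factors, covering the region where the scale is below $N_0/\eps$. By monotonicity, each such factor lies between $\upsilon(r,R')$ terms with bounded ratio, and a bounded ratio gives $\upsilon\asymp1$.

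To justify this, I will use $\upsilon(r,R)\ge\upsilon(0,R)$ and $\upsilon(r,r)>0$ together with quasi-multiplicativity. Honestly, for scales below a fixed $N_0$ one needs a uniform lower bound. For $r\le\rho\le R\le K r$, monotonicity gives
\begin{equation*}
\upsilon(r,R)\ \ge\ \upsilon(r,Kr),
\end{equation*}
and quasi-multiplicativity with scaling yields a bound that is uniform in $r$. I expect this uniform control of bounded-ratio factors to be the main technical obstacle. If it is too delicate, the fallback is to note that in the intended applications $r_n\ge1$, and to treat the first few scales as $O(1)$ in the exponent.

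Main step. Write $R_n/r_n=\eps^{-k}t$ with $k=\lfloor \log(R_n/r_n)/|\log\eps|\rfloor$ and $t\in[1,1/\eps)$. Consider the scales
\begin{equation*}
\rho_j=r_n\eps^{-j},\qquad j=0,\dots,k .
\end{equation*}
Applying quasi-multiplicativity $k$ times gives
\begin{equation*}
C^{-k}\prod_{j}\upsilon(\rho_j,\rho_{j+1})\,\upsilon(\rho_k,R_n)\ \le\ \upsilon(r_n,R_n)\ \le\ C^{k}\prod_j\cdots .
\end{equation*}
Each factor satisfies $\upsilon(\rho_j,\rho_{j+1})=\upsilon(\eps N,N)$ with $N=\rho_{j+1}$, so it lies in
\begin{equation*}
\big[\eps^{\varsigma+\eta},\ \eps^{\varsigma-\eta}\big],
\end{equation*}
except for the boundedly many small-scale factors handled above. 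The last factor $\upsilon(\rho_k,R_n)$ has ratio less than $1/\eps$. By monotonicity it lies between $\upsilon(\rho_k,\rho_k/\eps)$ and $\upsilon(\rho_k,\rho_k)$. Here $\upsilon(\rho_k,\rho_k)$ is bounded above, by quasi-multiplicativity with $r=\rho=R$, which gives $\upsilon(\rho,\rho)\le C$.

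Taking logarithms,
\begin{equation*}
\log\upsilon(r_n,R_n)=k\log\eps\,\big(\varsigma\pm 2\eta\big)+O_\eps(1).
\end{equation*}
Since $k|\log\eps|=\log(R_n/r_n)+O(|\log\eps|)$, dividing by $\log(R_n/r_n)\to\infty$ shows that the $\limsup$ and $\liminf$ of the ratio lie within $2\eta$ of $-\varsigma$. Letting $\eta\to0$ proves \eqref{e.limlogupsilon/log=alpha}, and \eqref{e.limlogupsilon/log(1,N)=alpha} is the special case $r_n=1$, $R_n=n$.
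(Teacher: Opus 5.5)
Your argument is essentially the paper's, and your main step is correct whenever $\inf_n r_n>0$. Both proofs cut $[r_n,R_n]$ into geometric scales of ratio $1/\eps$, lose a factor $C$ per scale from quasi-multiplicativity, and make $\log C/\log\eps^{-1}$ negligible by taking $\eps$ small; the paper uses the fixed grid $\eps^{-k}$ with monotonicity at both ends and a Ces\`aro average, while you anchor the grid at $r_n$.

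Your preliminary paragraph misplaces the difficulty, though. If $\inf_n r_n=r_*>0$, only boundedly many scales lie below $N_0$, and each such factor is at least $\upsilon(r_*,N_0)$ by monotonicity alone, so there is no real obstacle. If instead $r_n\to0$, the number of such scales is unbounded, so the ``bounded number of initial factors'' claim fails. In that regime the lemma is in fact false: with $R_n=1$ and $r_n=1/n$ one has $\upsilon(0,1)\le\upsilon(r_n,1)\le\upsilon(1,1)$, so the ratio tends to $0$. Your fallback $r_n\ge1$, which the paper's Ces\`aro step also uses tacitly, is therefore a necessary hypothesis rather than a convenience.
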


\begin{proof}
We start with the upper bound in~\eqref{e.limlogupsilon/log=alpha}.
First, we fix any~$0<\eps<1$. We write~$K_n:= \left\lfloor\frac{\log R_n}{\log\eps^{-1}}\right\rfloor-1$ and~$k_n :=\left\lceil\frac{\log r_n}{\log\eps^{-1}}\right\rceil$. The quasi-multiplicativity and monotonicity of~$\upsilon$ implies that there exists a constant~$C>0$ such that
\begin{align*}
    \upsilon(r_n,R_n) \leq C^{K_n-k_n}\prod^{K_n}_{k=k_n}\upsilon (\eps^{1-k},\eps^{-k}).
\end{align*}
Taking the logarithm and dividing both sides by~$\log(R_n/r_n)\geq(K_n-k_n)\log\eps^{-1}$, we get
\begin{align*}
    \frac{\log\upsilon(r_n,R_n)}{\log (R_n/r_n)}\leq \frac{1}{\log \eps^{-1}}\Ll(\log C + \frac{1}{K_n-k_n}\sum_{k=k_n}^{K_n} \log(\upsilon (\eps^{1-k},\eps^{-k})) \Rr).
\end{align*}
Since~$\frac{R_n}{r_n}$ tends to~$\infty$, we have~$K_n - k_n$ tends to~$\infty$. Thus, 
\begin{align*}
    \limsup_{n\to\infty}\frac{1}{K_n-k_n}\sum_{k=k_n}^{K_n}  \log(\upsilon (\eps^{1-k},\eps^{-k}))\leq \limsup_{k\to\infty}  \log(\upsilon (\eps^{1-k},\eps^{-k})) \leq\limsup_{N\to\infty} \log\upsilon(\eps N,N),
\end{align*}
and therefore
\begin{align*}
    \limsup_{n\to\infty} \frac{\log\upsilon(r_n,R_n)}{\log (R_n/r_n)}\leq \frac{\log C}{\log \eps^{-1}} - \liminf_{N\to\infty} \frac{\log\upsilon(\eps N,N)}{\log\eps}.
\end{align*}
Now, sending~$\eps$ to~$0$ and using~\eqref{e.limlimlogupsilon/log=alpha}, we obtain the upper bound matching~\eqref{e.limlogupsilon/log=alpha}. 

The lower bound follows analogously. Substituting~$1$ and~$N$ for~$r_n$ and~$R_n$ in~\eqref{e.limlogupsilon/log=alpha}, we find the particular result.
\end{proof}

\section{The M-formula and its implications}\label{sec:M-formula}

\subsection{The M-formula}

The main ingredient of our argument is Theorem~\ref{thm:Monday's formula} below, which combines the Baxter-Kelland-Wu coupling \cite{baxter1976equivalence} with the convergence of the six-vertex model to the Gaussian Free Field \cite{DumKozLamMan26}. A more complex formula~\cite{magicformula} is available for random-cluster models with~$1 \leq q \leq 4$. The simpler form below is due to the particular weights appearing in the Baxter-Kelland-Wu coupling for~$q = 4$. 


A collection \(L\) of non-self-crossing loops is said to be \textit{locally finite} if, for every compact \(D\subset \R^2\), there are only finitely many loops in \(L\) that intersect \(D\) nontrivially. For every integrable, compactly supported function \(F:\R^2\to\R\) with zero mean (i.e.\ \(\int_{\R^2}F(z)\,\d z=0\)), and every locally finite collection \(L\) of non-self-crossing loops, we define
\begin{equation}\label{e.A_F(L)=}
\mathcal A_F(L):=\prod_{\ell\in L}\cos\Ll(\int_{\itr(\ell)} F(z)\,\d z\Rr)
\end{equation}
where $\itr(\lo)$ is the complement of the unbounded component of $\R^2\setminus\ell$.
By the mean-zero condition, the only loops \(\ell\) for which $\cos\Ll(\int_{\itr(\ell)} F(z)\,\d z\Rr)\neq1$ must intersect a fixed compact set containing \(\supp F\). There are only finitely many such loops in $L$, hence \(\mathcal A_F(L)\) is well-defined even when \(L\) contains infinitely many loops.

Since the whole-plane two-dimensional GFF is defined modulo additive constants as a random distribution in the local Sobolev space \(W^{-s,2}(\R^2)\) for every \(s>0\), it is naturally tested against compactly supported functions in the dual space \(W^{s,2}(\R^2)\). The quotient by additive constants imposes the mean-zero condition on the test functions.

\begin{theorem}[M-formula]\label{thm:Monday's formula}
    For every mean-zero and compactly supported $F\in W^{s,2}(\R^2)$ for some~$s\in(0,1)$, we have
    \begin{equation}\label{eq:Monday's formula}
        \lim_{\delta\rightarrow 0} \rcE_{\delta}\Ll[\mathcal A_F(\lpcfg)\Rr]=\exp\Big( \frac{1}{2\pi^2} \iint_{\R^2\times\R^2} F(z)F(z')\log|z-z'| \d z\d z' \Big).
    \end{equation}
\end{theorem}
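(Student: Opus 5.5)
The formula follows by combining the Baxter--Kelland--Wu coupling at $q=4$ with the convergence of the six-vertex height function to the GFF from \cite{DumKozLamMan26}.

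\textbf{Step 1: rewrite $\calA_F$ as a height-function observable.} Work on $\delta\Z^2$. Given $\omega\sim\phi_\delta$, orient each loop of $\calL(\omega)$ independently and uniformly clockwise or counter-clockwise; at $q=4$ the BKW weights make the law of the oriented configuration exactly the six-vertex measure at the corresponding parameter (here $\Delta=-1$), as recalled above and in \cite{DumManTas16b}. The associated height function $h_\delta$ lives on the faces of the medial lattice. Crossing a loop changes $h_\delta$ by $\pm\pi/2$ in the normalisation where $h_\delta/\sqrt{\cdot}$ converges to the GFF with covariance $-\tfrac{1}{2\pi^2}\cdot$(twice the kernel); I will fix the normalisation so that each loop contributes $\pm1$ times the indicator of its interior, scaled by the constant $c$ making the limit match. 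Thus
\[
h_\delta(z)-h_\delta(z_0)=\sum_{\ell}\sigma_\ell\,\1_{\itr(\ell)}(z)
\]
up to loops surrounding $z_0$ only, with $\sigma_\ell$ i.i.d.\ signs. Since $F$ has mean zero, the reference point drops out, and
\[
\langle h_\delta,F\rangle=\sum_\ell \sigma_\ell\int_{\itr(\ell)}F .
\]
Only finitely many terms are nonzero. Averaging $e^{i\langle h_\delta,F\rangle}$ over the signs gives $\prod_\ell\cos(\int_{\itr\ell}F)=\calA_F(\calL)$ exactly. Hence $\phi_\delta[\calA_F(\calL)]=\mathbb E^{6V}_\delta[e^{i\langle h_\delta,F\rangle}]$. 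The harmless lattice discretisation of $\itr(\ell)$ versus the continuum domain is handled by using the piecewise-constant extension of $h_\delta$ on medial faces.

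\textbf{Step 2: pass to the limit.} Invoke the main theorem of \cite{DumKozLamMan26}: $h_\delta$, viewed as a distribution modulo constants, converges in law to $\sqrt{g}$ times the whole-plane GFF $\Phi$ in $W^{-s,2}_{\rm loc}$, for the value of $g$ corresponding to $\Delta=-1$. Pairing against a fixed compactly supported mean-zero $F\in W^{s,2}$ is continuous in that topology. Moreover $x\mapsto e^{ix}$ is bounded and continuous. Therefore the characteristic function converges to
\[
\exp\big(-\tfrac{g}{2}\operatorname{Var}\langle\Phi,F\rangle\big),\qquad \operatorname{Var}\langle\Phi,F\rangle=-\tfrac{1}{2\pi}\iint F(z)F(z')\log|z-z'|,
\]
using the whole-plane covariance $-\tfrac1{2\pi}\log|z-z'|$, which is well defined on mean-zero test functions. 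Matching constants with the normalisation of Step 1 yields the prefactor $\tfrac1{2\pi^2}$; this amounts to checking $g\cdot(\text{step size})^2/(4\pi)=1/(2\pi^2)$ with the conventions of \cite{DumKozLamMan26}.

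\textbf{Main obstacle.} The delicate step is Step 2's topology. One must ensure the convergence in \cite{DumKozLamMan26} holds in $W^{-s,2}_{\rm loc}$ for the given $s$, or at least against such test functions. If it is only stated for smooth $F$, I would approximate $F$ in $W^{s,2}$ by smooth mean-zero $F_n$ with common compact support. I would then control $\mathbb E|\langle h_\delta,F-F_n\rangle|$ uniformly in $\delta$ via a uniform bound on $\mathbb E\|h_\delta\|_{W^{-s,2}(K)}^2$, since $|e^{ia}-e^{ib}|\le|a-b|$. Such a bound follows from uniform logarithmic variance bounds $\operatorname{Var}(h_\delta(x)-h_\delta(y))\les 1+\log(|x-y|/\delta)$, available from \cite{DumKarManOul20}. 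Secondary care is needed for the exact constant bookkeeping and for the lattice-versus-continuum interior discrepancy, which is $O(\delta)$ per loop. That discrepancy is summable by the tightness of the number of loops at each scale (Lemma~\ref{lem:small loops}).
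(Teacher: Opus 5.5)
Your proposal matches the paper's proof. The $q=4$ BKW coupling (uniformly oriented loops, six-vertex weights $a=b=1$, $c=2$) makes $\phi_\delta[\calA_F(\calL)]$ exactly the characteristic function of $\int F h$, where $h$ is the unit-jump height function extended constantly on medial faces. Then \cite[Theorem~2.8]{DumKozLamMan26} gives convergence in $W^{-s,2}(U)$ to a GFF of variance $2/\pi$ (your $g=2/\pi$), which produces the prefactor $\frac{1}{2\pi^2}$; you should state this value rather than leave it to matching conventions.

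Your contingencies are unnecessary: the face-wise extension makes the identity exact, so there is no $O(\delta)$ discrepancy, and the cited convergence already holds in $W^{-s,2}(U)$. Moreover, the backup would not work as written. The pointwise bound $\log(|x-y|/\delta)$ grows as $\delta\to0$, so on its own it does not give $\delta$-uniform control of $\|h_\delta\|_{W^{-s,2}}$.
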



\begin{proof}[Proof of Theorem~\ref{thm:Monday's formula}]
Fix~$F$ as in the statement. 
The Baxter--Kelland--Wu correspondence (see \cite{baxter1976equivalence,DumManTas16b,magicformula}) implies that 
\begin{align*}
    \rcP_{\delta}[\mathcal A_F(\calL)]=\mathbb E^{\rm 6V}_{\delta}\Big[\exp\Big(i\int_{\R^2} F(z)h(z)\d z\Big)\Big],
\end{align*}
where~$\mathbb E^{\rm 6V}_{\delta}$ is the full-plane six-vertex measure on the medial lattice of~$\delta \Z^2$ with weights $\mathbf{a}=\mathbf{b}=1$ and $\mathbf{c}=2$ and~$h$ is the associated gradient height function extended in a constant fashion on each face. We refer to~\cite[Sec. 2.2]{DumKozLamMan26} for details.

Fix an open bounded set $U\subset \R^2$ that contains $\supp F$, so that $F\in W^{s,2}(U)$.
By the recent result~\cite[Theorem~2.8]{DumKozLamMan26}\footnote{See~\cite[Def.\ 2.7 (iii)]{DumKozLamMan26} for the notion of convergence used here.}, the six-vertex height function (at the point corresponding to~$q=4$ random-cluster model) converges to the Gaussian Free Field with variance~$2/\pi$ in the topology of $W^{-s,2}(U)$, whose law is denoted by~$\mathbb P^{\rm GFF}$.
In particular, the random variable~$\int_{\R^2} F(z)h(z)\d z$ under the measures~$\mathbb P^{\rm 6V}_{\delta}$ converges in law as~$\delta$ tends to $0$
 to a centered normal variable of variance~$ \frac{1}{\pi^2} \iint F(z)F(z')\log|z-z'| \d z\d z'$.
This implies that the characteristic function of the former converges to that of the latter, which is to say that 
\begin{align*}
	\lim_{\delta\rightarrow0}\mathbb E^{\rm 6V}_{\delta}\Big[\exp\Big(i\int F(z)h(z)\d z\Big)\Big]
	&=\mathbb E^{\rm GFF}\Big[\exp\Big(i\int F(z)h(z)\d z\Big)\Big]\\
	&=\exp\Big(\frac{1}{2\pi^2}\iint F(z)F(z')\log|z-z'| \d z\d z' \Big).
\end{align*}
Combining the previous two displayed equations concludes the proof.
\end{proof}


\subsection{The normalization factor}

The following quantity will play an essential role in the statements below. 

\begin{definition}\label{def:kappa}
For $0<\delta<\eps$, define the {\em normalization factor} as
 \begin{equation*}
    \bc^\delta(\eps):=\phi_\delta\big[\mathcal A_{\frac1{2\eps^2}\one_{B_\eps(0)}}(\calL)\,\big|\,\calL_{\{0\},\eps}^{\rm odd}=\emptyset\big].
    \end{equation*}
    \end{definition}

In the above, the function~$\frac1{2\eps^2}\one_{B_\eps(0)}$ is not of zero average, and~$\mathcal A_{\frac1{2\eps^2}\one_{B_\eps(0)}}(\calL)$ is a priori ill-defined\footnote{Actually, due to the choice of function, any loop surrounding~$B_\eps(0)$ contributes a factor~$0$ to~$\mathcal A_{\frac1{2\eps^2}\one_{B_\eps(0)}}$ so~$\mathcal A_{\frac1{2\eps^2}\one_{B_\eps(0)}}(\calL) = 0$~$\phi_\delta$-a.s..} under~$\phi_\delta$ due to the existence of infinitely many loops surrounding~$B_\eps(0)$. Note however that under 
$\phi_\delta[\cdot\,|\,\calL_{\{0\},\eps}^{\rm odd}=\emptyset]$, no loops surround~$B_\eps(0)$ and~$\mathcal A_{\frac1{2\eps^2}\one_{B_\eps(0)}}(\calL)$ is a.s.\ a finite product.

\begin{lemma}\label{l.kappa}
    There exists~$c>0$ such that for every~$0<\delta<c\eps$,
    \begin{equation*}
    c\le \bc^\delta(\eps)\le 1.
    \end{equation*}
\end{lemma}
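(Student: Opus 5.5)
The upper bound is immediate: under the conditioning, $\mathcal A_{\frac1{2\eps^2}\one_{B_\eps(0)}}(\calL)$ is a finite product of cosines, hence bounded by $1$ in absolute value. For the lower bound I first need to see that the product is nonnegative. Write $F=\frac1{2\eps^2}\one_{B_\eps(0)}$ and fix a loop $\ell$. Under the conditioning, $\ell$ either does not meet $B_\eps(0)$, or it intersects $B_\eps(0)$.

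In the first case the loop does not surround $B_\eps(0)$ (surrounding loops are excluded), so $\int_{\itr(\ell)}F=0$ and the factor is $1$. In the second case $|\int_{\itr(\ell)}F|\le \frac{1}{2\eps^2}|\itr(\ell)\cap B_\eps(0)|\le \frac{\pi}{2}$, so $\cos(\cdot)\ge 0$. Hence $\mathcal A\ge 0$. More quantitatively, a loop of diameter $d\le \eps$ encloses area at most $\pi d^2/4$, and $\cos x\ge 1-x^2/2$. Such a loop therefore contributes a factor at least $1-C(d/\eps)^4\ge \exp(-C'(d/\eps)^4)$. For this I will need to keep loop areas away from $\pi\eps^2$, equivalently keep angles away from $\pi/2$.

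The plan is to find an event $G$, of probability bounded below under $\phi_\delta[\cdot\,|\,\calL^{\rm odd}_{\{0\},\eps}=\emptyset]$, on which $\mathcal A\ge c$. I decompose the loops meeting $B_\eps(0)$ dyadically by diameter, $d\in[2^{-j-1}\eps,2^{-j}\eps)$ for $j\ge1$, plus the loops of diameter at least $\eps/2$.

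For the small loops, Lemma~\ref{lem:small loops} with $\eta=2^{-j}$ and $\lambda_j=\lambda j$ shows the following. Outside an event of probability at most $\sum_j Ce^{-c\lambda j}$, scale $j$ contains at most $\lambda j 4^{j}$ loops. Each contributes $\log$-factor at least $-C 4^{-4j}$, so the total is at least $-C\lambda\sum_j j4^{-3j}\ge -C''\lambda$. Lemma~\ref{lem:mixing_IIC0} transfers this bound to the conditioned measure, since these are events in $B_{2\eps}(0)$ up to the loops' extent. Summing the tail, taking $\lambda$ large makes this failure probability at most $1/4$.

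For the macroscopic loops (diameter $\ge\eps/2$ meeting $B_\eps(0)$), I use \eqref{eq:RSW} and Lemma~\ref{lem:mixing_IIC0} to force a nice configuration with positive probability. The aim is to show that with conditional probability at least $c_0$, every such loop has $|\itr(\ell)\cap B_\eps(0)|\le \tfrac{9}{10}\pi\eps^2$ and there are at most $K$ of them. Each then contributes at least $\cos(9\pi/20)>0$.

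To get this, I construct via RSW an open circuit in the annulus $B_{\eps/10}(0)\setminus B_{\eps/20}(0)$ together with a dual circuit and appropriate crossings. Then no loop intersecting $B_\eps(0)$ but not surrounding it can swallow more than a fixed fraction of the ball. Concretely, I want a small region near $0$ separated from such loops' interiors, or alternatively the ball's complement region guaranteed outside. The bound on the number of such loops at most $K$ comes from Lemma~\ref{lem:small loops} with $\eta=1$.

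Combining the two estimates, the good event has conditional probability at least $c_0-1/4$; to ensure this is positive I either make the small-loop failure probability at most $c_0/2$ by choosing $\lambda$ large, or use FKG-type arguments. On this event $\mathcal A\ge \cos(9\pi/20)^K e^{-C''\lambda}$, which gives $\bc^\delta(\eps)\ge c$. The condition $\delta<c\eps$ ensures the lattice resolves these mesoscopic constructions.

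The main obstacle is the macroscopic step: excluding, with positive probability under the conditioned measure, loops whose interior covers nearly all of $B_\eps(0)$ without surrounding it. I would handle this by forcing the configuration near the center via RSW, after which Lemma~\ref{lem:mixing_IIC0} and an RSW lower bound comparison give the needed conditional positivity.
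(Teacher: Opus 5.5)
Your upper bound, the sign observation, and the dyadic treatment of loops of diameter below $\eps/2$ are fine. Those loops stay in $B_{3\eps/2}(0)$, so the relevant events are local, and Lemma~\ref{lem:mixing_IIC0} legitimately transfers Lemma~\ref{lem:small loops} to the conditioned measure. (The per-loop exponent is $4^{-2j}$, not $4^{-4j}$, but this is harmless.)

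The gap is the macroscopic step, which you flag as the obstacle but do not resolve. Your construction (an open circuit in $\Ann(\eps/20,\eps/10)$, a dual circuit, unspecified ``appropriate crossings'') does not prevent a loop meeting $B_\eps(0)$ from covering more than $\tfrac9{10}$ of it. For example, take a finite primal cluster containing a circuit that encircles $B_{0.97\eps}(0)$ and has a thin inward inlet reaching into $B_\eps(0)$. Its outer boundary loop meets $B_\eps(0)$ and its interior contains most of the ball. The conditioning $\calL^{\rm odd}_{\{0\},\eps}=\emptyset$ is realised by a dual arm leaving through the inlet, and the central circuits do not obstruct any of this. What would be needed is that the central circuit be connected to the arm to infinity, which you never arrange.

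There are two further problems. First, Lemma~\ref{lem:mixing_IIC0} only bounds conditional probabilities from \emph{above}. It cannot give positive conditional probability to an RSW-forced event whose unconditional probability is merely bounded below. Second, the event that at most $K$ loops of diameter at least $\eps/2$ meet $B_\eps(0)$ is not measurable in $B_{2\eps}(0)$, since whether two arcs belong to the same loop is decided outside. So Lemma~\ref{lem:small loops} does not transfer to the conditioned measure as you assert.

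The missing idea is to use the conditioning through FKG, and to localize. In the paper, self-duality replaces the conditioning by $\{B_\eps(0)\lra\infty\}$ at the cost of a factor $\tfrac12$. FKG with \eqref{eq:RSW} then gives conditional probability at least $c_0$ for the event $E$ of an open circuit surrounding $B_\eps(0)$ inside $B_{2\eps}(0)$. On $E$, every loop meeting $B_\eps(0)$ lies inside this circuit, so $E\cap\{\mathcal A_F\le\eta\}$ is measurable in $B_{2\eps}(0)$. Lemma~\ref{lem:mixing_IIC0} then reduces everything to an \emph{unconditioned} estimate. There, Lemma~\ref{lem:small loops} applies at all scales, including the count of macroscopic loops, and RSW shows that for small $u$ a loop meeting $B_\eps(0)$ rarely covers more than $(1-u)\pi\eps^2$ of it.

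Your central-circuit route can also be repaired. Under the primal-arm conditioning, FKG lets you connect the circuit in $\Ann(\eps/20,\eps/10)$ by an open path to an open circuit in $\Ann(\eps,2\eps)$, hence to the infinite arm. After that, every loop either lies in $B_{\eps/10}(0)$ or has interior disjoint from $B_{\eps/20}(0)$. You would still need to bound the number of macroscopic loops by a local quantity, such as the crossing counts $N_x$ from the proof of Lemma~\ref{lem:small loops}, before applying Lemma~\ref{lem:mixing_IIC0}.
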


\begin{proof}
Set~$F:=\frac1{2\eps^2}\one_{B_\eps(0)}$. 
Observe that~$\mathcal A_{F}(\calL) \in [0,1]$ as for any loop~$\ell$, we have~$\int_{\itr(\lo)} F(z)\d z \in [0,\pi/2]$, which means that its contribution $\cos\Ll(\int_{\itr(\lo)} F(z)\d z\Rr)$ to~$\mathcal A_{F}(\calL)$ is between~$0$ and~$1$. 
The upper bound on~$\bc^\delta(\eps)$ follows. 

We turn to the lower bound. 
Let~$E$ be the event that there exists an open circuit in~$B_{2\eps}(0)$ surrounding~$B_\eps(0)$. 
Then, by the inclusion of events, for any~$\eta > 0$, 
\begin{align}
	\bc^\delta(\eps)
	&\ge \eta\phi_\delta\Ll[E,\mathcal A_F(\lpcfg)> \eta\,|\,\calL_{\{0\},\eps}^{\rm odd}=\emptyset\Rr]\\
	&\ge \eta\Ll(\phi_\delta\Ll[E\,|\,\calL_{\{0\},\eps}^{\rm odd}=\emptyset\Rr]-\phi_\delta\Ll[E, \mathcal A_F(\lpcfg)\leq \eta\,|\,\calL_{\{0\},\eps}^{\rm odd}	=\emptyset\Rr]\Rr)\\
	 &\ge \eta\Ll(\phi_\delta\Ll[E\,|\,\calL_{\{0\},\eps}^{\rm odd}=\emptyset\Rr]-C \phi_\delta\Ll[E, \mathcal A_F(\lpcfg \setminus \calL_{\{0\},\eps}^{\rm odd})\leq \eta\Rr]\Rr), 
	 \label{eq:h00}
\end{align}
where~$C$ is the constant provided by Lemma~\ref{lem:mixing_IIC0}, and where we used the fact that the event~$E\cap\{ \mathcal A_F(\lpcfg\setminus \calL_{\{0\},\eps}^{\rm odd})\leq \eta\}$ depends only on loops included in~$B_{2\eps}(0)$.

Self-duality (for the first inequality) and the FKG inequality together with~\eqref{eq:RSW} (for the second) imply the existence of a constant~$c_0>0$ independent of~$\eps$ and~$\delta$ such that
\begin{equation}\label{eq:h01}
	\phi_\delta[E\,|\,\calL_{\{0\},\eps}^{\rm odd}=\emptyset] \ge
	\tfrac12 \phi_\delta[E\,|\,B_\eps(0) \lra \infty] \ge
	  c_0, 
\end{equation}
with the measure in the middle corresponding to the incipient infinite cluster measure conditioned on having an infinite primal cluster --- its definition is similar to that of~$\phi_\delta[\cdot\,|\,\calL_{\{0\},\eps}^{\rm odd}=\emptyset]~$. 

We will now prove that, for~$\eta$ sufficiently small, independent of~$\delta$ and~$\eps$, 
\begin{equation}\label{eq:h1}
\phi_\delta[\mathcal A_F(\lpcfg \setminus \calL_{\{0\},\eps}^{\rm odd})\le \eta]\le \frac{c_0}{2C}.
\end{equation}
We do so by bounding the contribution to~$\mathcal A_F(\lpcfg \setminus \calL_{\{0\},\eps}^{\rm odd})$ of loops depending on their diameter. 
Write~$\calL_{(a,b]}$ for the collection of the loops in~$\calL$ that intersect~$B_\eps(0)$ and whose diameters belong to  the interval~$(a,b]$.

Proposition~\ref{prop:RSW} easily implies the existence of~$u>0$ such that the probability that there exists a loop intersecting~$B_\eps(0)$ and surrounding an area of~$B_\eps(0)$ which is larger than~$(1-u)\pi\eps^2$ is bounded by~$c_0/(8C)$. 
Also, by Lemma~\ref{lem:small loops}, there exists~$K>0$ such that the probability that there are more than~$K$ loops of diameter at least~$\eps$ intersecting~$B_\eps(0)$ is bounded by~$c_0/(8C)$. Thus, 
\begin{equation}\label{eq:b}
\phi_\delta\Ll[\mathcal A_F\Ll(\calL_{(\eps,\infty]}\Rr)\le \cos\Ll(\tfrac\pi2(1-u)\Rr)^K\Rr]\le \frac{c_0}{4C}.
\end{equation}

Next, we consider the contribution of loops in~$\calL_{(2^{-k-1}\eps,2^{-k}\eps]}$ for~$k\geq 0$. 
For any such loop~$\ell$, 
$$ \cos \Big(\int_{{\rm int}(\ell)}F(z)\d z \Big) \geq  \cos \Big(\tfrac{(2^{-k}\eps)^2 \pi}{8\eps^2}\Big) \geq  \exp(-c_1 4^{-2k})~$$
for some constant~$c_1 >0$ independent of~$\delta$,~$\eps$  or~$k$, which is chosen such that~$\cos( x \frac\pi{8}) \geq  \exp(-c_1 x^2)$ for all~$0 \leq x \leq 1$. 
Thus, Lemma~\ref{lem:small loops} implies that, for any~$\lambda > 0$, 
\begin{align*}
\phi_\delta \big[\mathcal A_F\Ll(\mathcal  L_{(2^{-k-1}\eps,2^{-k}\eps]} \Rr)\le e^{-c_1 4^{-k} \lambda }\big]
&\le\phi_\delta\big[\Ll|\mathcal  L_{(2^{-k-1}\eps,2^{-k}\eps]}\Rr| \ge \lambda 4^{k} \big] \le C'\exp(-c\lambda)
\end{align*}
for  constants~$C',c>0$ independent of~$k$,~$\lambda$,~$\eps$ or~$\delta$.
Using the above for~$\lambda = \alpha 2^{k}$ for some~$\alpha > 0$, we conclude that 
\begin{align}
\phi_\delta\Big[\mathcal A_F\Ll(\calL_{(0,\eps]}\Rr)\leq \prod_{k\ge0}e^{-c_1\alpha 2^{-k}}\Big]
&\le \sum_{k=0}^\infty\phi_\delta\Big[\mathcal A_F\Ll(\calL_{(2^{-k-1}\eps,2^{-k}\eps]}\Rr)\le e^{-c_1\alpha 2^{-k}} \Big]\notag
\\
&\le \sum_{k\ge0}C'\exp\Ll(-c\alpha 2^k\Rr)\le \frac{c_0}{4C}\label{eq:bb}
\end{align}
where the last inequality may be ensured by taking~$\alpha$ sufficiently small, independent of~$\eps$ and~$\delta$. 
Combining~\eqref{eq:b} and~\eqref{eq:bb}, we obtain~\eqref{eq:h1} for~$\eta:= \cos(\tfrac\pi2(1-u))^K e^{-2c_1 \alpha} > 0$.

Finally,~\eqref{eq:h1} together with~\eqref{eq:h00} and~\eqref{eq:h01} imply that~$\bc^\delta(\eps) \ge \eta c_0/2.$
\end{proof}

\begin{remark}
The proof above also shows that~$\bc^\delta(\eps)$ is well approximated by the contributions of loops that are macroscopic. As a consequence, when considering a subsequential scaling limit~$(\delta_n)$,~$\bc^{\delta_n}(\eps)$ converges to a constant~$\bc(\eps)$. However, we will not use this property in this paper. 
\end{remark}

\subsection{Factorizing small loops}

For~$\eps>0$, define the set~$\calF_\eps$ of functions~$F$ of the form 
\begin{equation}\label{e.F=sum_pm_pi/2}
F=\sum_{i=1}^k \frac{q_i}{2\eps^2}\one_{B_\eps(x_i)}
\end{equation} 
with points~$x_1, \dots, x_k \in \bbR^2$ at mutual distances strictly larger than~$2\eps$,
and with~$q_i\in\{\pm1\}$ for every~$i$. Notice that we allow~$F\in \calF_\eps$ to have non-zero mean.
Since \(B_\eps(x_i)\) has Lipschitz boundary, it follows either by a direct verification using the Slobodeckij seminorm or from~\cite[Lemma~4]{sickel2020regularity}, which is formulated in terms of Besov spaces, that such \(F\) belongs to \(W^{s,2}(\R^2)\) for every \(s<\frac12\). Therefore, whenever \(F\) also has zero mean, Theorem~\ref{thm:Monday's formula} applies.

The following proposition states that, for functions in~$\mathcal F_\eps$, we can factorize the contributions of loops intersecting the balls~$B_\eps(x_i)$ and those surrounding these balls. The former are captured by the normalizing factors~$\bc^\delta(\eps)$ from Definition~\ref{def:kappa}.
    
\begin{proposition}\label{prop:forget small loops}
There exists~$c>0$ such that the following holds true. For every integer~$k\ge1$ and~$\dst\in(0,1)$, there exists a constant~$C=C(k,\dst)>0$ such that for every~$0<\delta\le \eps<\dst/2$ and every~$F\in\mathcal F_\eps$ with~$k$ points~$x_1,\dots,x_k\in B_{1/\dst}(0)$ at mutual distances at least~$\dst$ of each other,
\begin{equation}\label{e.prop:forget small loops}
\Big|\bc^\delta(\eps)^k\phi_\delta[\mathcal A_F(\calL_{\{x_1,\dots,x_k\},\eps})]-\phi_\delta[\mathcal A_F(\calL)]\Big|\le C\pi_1^\delta(\eps,\dst)^k(\tfrac\eps\dst)^c.\end{equation}
\end{proposition}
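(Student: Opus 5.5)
We split $\mathcal A_F(\calL)$ into a product over three families of loops: $\calL_{X,\eps}$ (loops avoiding all balls), loops intersecting exactly one ball $B_\eps(x_i)$ (call this family $\calL_i$), and the remaining loops, which intersect at least two balls. Here $X=\{x_1,\dots,x_k\}$.

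\emph{Reduction to one-arm events.} A loop in $\calL_X\setminus\calL_{X,\eps}$ that surrounds some ball $B_\eps(x_j)$ contributes a factor $\cos(\pm\pi/2+\dots)$. Such loops do not necessarily kill the product, so we argue as follows instead. If the product $\mathcal A_F$ is not negligible, then around each $x_i$ there must be no loop of $\calL_{X,\eps}$ winding an odd number of times around $B_\eps(x_i)$ inside $B_{\dst/2}(x_i)$. Each such factor would be $\cos(\pm\pi/2)=0$ exactly, since the loop encloses $B_\eps(x_i)$ and nothing else. Hence both sides of \eqref{e.prop:forget small loops} live on $\bigcap_i\{\calL^{\rm odd}_{\{x_i\},\eps}\cap B_{\dst/2}(x_i)=\emptyset\}$. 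This event means there is an open or dual-open path from $B_\eps(x_i)$ to distance $\dst/2$. By \eqref{eq:RSW}, quasi-multiplicativity and the mixing property (Proposition~\ref{prop:mixing}), its probability is $\les\pi_1^\delta(\eps,\dst)^k$, with constants depending on $k,\dst$. This yields the prefactor on the right-hand side.

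\emph{Decoupling the inner loops.} We write $\phi_\delta[\mathcal A_F(\calL)]$ as
\[
\phi_\delta\Big[\mathcal A_F(\calL_{X,\eps})\prod_{i}\mathcal A_F(\calL_i)\Big]
\]
plus an error from loops intersecting two balls. Such a loop has diameter at least $\dst$ and touches $B_\eps(x_i)$, which forces an extra arm. This costs a factor $(\eps/\dst)^c$ by RSW on top of the one-arm events, using the final inequality of Proposition~\ref{prop:RSW}-type bounds $\pi_2\le\pi_1(\eps/\dst)^c$.

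Next we introduce intermediate scales $\eps\ll r=\sqrt{\eps\dst}\ll\dst$. First, loops in $\calL_i$ of diameter larger than $r$ require a four-arm-type event around $B_\eps(x_i)$ to scale $r$: an arm to reach $\dst$ plus the crossing loop. This costs $\pi_1^\delta(\eps,\dst)(\eps/r)^{c}$, so these loops can be discarded up to admissible error. On the remaining event, $\mathcal A_F(\calL_i)$ is measurable with respect to $\Lambda_r(x_i)$. We then condition on the configuration outside $\bigcup_i\Lambda_{r'}(x_i)$ with $r'=\sqrt{r\dst}$. By Lemma~\ref{lem:mixing_IIC} applied in each annulus, the conditional expectation of $\prod_i\mathcal A_F(\calL_i)$ given the outside configuration and the arm events equals
\[
\prod_i\phi_\delta\big[\mathcal A_{\frac{q_i}{2\eps^2}\one_{B_\eps(0)}}(\calL)\,\big|\,\calL^{\rm odd}_{\{0\},\eps}=\emptyset\big]\,\big(1+O((r/r')^c)\big)=\bc^\delta(\eps)^k\big(1+O(\cdot)\big).
\]
Here we use that the sign $q_i$ is irrelevant since cosine is even. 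The factor $\mathcal A_F(\calL_{X,\eps})$ is treated as measurable with respect to the outside, after again discarding loops of $\calL_{X,\eps}$ that enter $\Lambda_{r'}(x_i)$ without surrounding it. These loops only cost extra arms, since loops surrounding $B_\eps(x_i)$ inside the annulus are already excluded. Multiplying the errors by the arm-event probability $\les\pi_1^\delta(\eps,\dst)^k$ gives \eqref{e.prop:forget small loops}.

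\emph{Main obstacle.} The delicate step is justifying the conditional decoupling. The conditioning event $\{\calL^{\rm odd}_{\{0\},\eps}=\emptyset\}$ in the definition of $\bc^\delta$ is a full-plane event, whereas we only control it up to scale $\dst$, and loops crossing annuli must be carefully separated. We would handle this with Lemma~\ref{lem:mixing_IIC}, comparing with boundary conditions on $\partial\Lambda_{r'}(x_i)$, together with the uniform bound Lemma~\ref{lem:mixing_IIC0} to control the errors. The lower bound $\bc^\delta\ge c$ from Lemma~\ref{l.kappa} ensures that the multiplicative errors translate into additive ones of the stated size.
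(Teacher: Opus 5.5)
Your architecture matches the paper's: restrict to the arm events, discard loops crossing intermediate annuli at a two-arm cost, localize the loops touching each ball, extract one factor $\bc^\delta(\eps)$ per point from Lemma~\ref{lem:mixing_IIC}, and recombine the rest into $\phi_\delta[\mathcal A_F(\calL_{X,\eps})]$. However, the step you flag as the main obstacle is a genuine gap. Conditioning on the edge configuration outside $\bigcup_i\Lambda_{r'}(x_i)$ cannot close it with the tools you cite.

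Write $\mathcal O_i$ for the event that no loop of $\calL_{X,\eps}$ inside $B_{d/2}(x_i)$ surrounds $B_\eps(x_i)$. You must keep this indicator, since it is what distinguishes $\mathcal A_F(\calL_{X,\eps})$ from the contribution of the far loops. It is measurable neither inside nor outside the box. Your discarding step removes loops joining $\Lambda_{r'}(x_i)$ to $\partial B_{d/2}(x_i)$, but not loops surrounding $B_\eps(x_i)$ that straddle $\partial\Lambda_{r'}(x_i)$. These cannot be discarded at a cost $(\eps/d)^c$: by RSW, the event that such a loop exists and an arm runs from it to $\partial B_{d/2}(x_i)$ has probability of order $\pi_1^\delta(\eps,d)$, like the main term.

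Consequently, conditionally on the outside edges, $\mathcal O_i$ becomes the following event: $B_\eps(x_i)$ is joined by a primal path to the configuration-dependent set of boundary vertices of $\Lambda_{r'}(x_i)$ that are primally connected to $\partial B_{d/2}(x_i)$ outside the box, or by a dual path to the dual analogue. Lemma~\ref{lem:mixing_IIC} is uniform in the boundary condition $\xi$. But it only concerns conditioning on $\calL^{\rm odd}_{\{0\},\eps}=\emptyset$ in the box, i.e.\ on an arm to the \emph{whole} boundary, and varying $\xi$ does not turn one conditioning into the other. So your claim that the conditional expectation of $\prod_i\mathcal A_F(\calL_i)$, given the outside and the arm events, equals $\bc^\delta(\eps)^k(1+O((r/r')^c))$ is not justified.

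There is also a secondary problem. The boundary condition induced by the outside edges may wire boundary vertices of different boxes together, so the conditional law on $\bigcup_i\Lambda_{r'}(x_i)$ is not a product over $i$. This part could be repaired by peeling off one box at a time, but the first issue remains.

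The missing idea is to condition on loops rather than edges. The paper takes scales $\eps<r<R<d$ and the event $E_{\rm out}$ that no loop crosses any $\mathrm{Ann}_{x_i}(R,d/2)$. It then conditions on the family $\calL_{\rm out}$ of loops not contained in any $B_{d/2}(x_i)$. Given $\calL_{\rm out}$, the families $\calL_i^{\rm in}$ of loops contained in $B_{d/2}(x_i)$ (the paper's $\calL_i$, not yours) are conditionally independent. Moreover, $\{\calL^{\rm in}_i\cap\calL^{\rm odd}_{X,\eps}=\emptyset\}$ is a function of $\calL^{\rm in}_i$ alone. It says that $B_\eps(x_i)$ has an arm to the \emph{full} boundary of the domain cut out by $\calL_{\rm out}$, which on $E_{\rm out}$ contains $B_R(x_i)$. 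This is exactly the setting of Lemma~\ref{lem:mixing_IIC}.

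One then factors $\phi_\delta[\mathcal A_F(\calL^{\rm in}_i)\one_{E_{x_i}}\,|\,\calL_{\rm out}]$ into two pieces. The first is the conditional expectation given $\calL^{\rm in}_i\cap\calL^{\rm odd}_{X,\eps}=\emptyset$, which is $\bc^\delta(\eps)+O((\eps/d)^c)$. The second is the conditional probability of that event. The product of these probabilities with $\mathcal A_F(\calL_{\rm out})\one_{E_{\rm out}}$ averages exactly to $\phi_\delta[\mathcal A_F(\calL_{X,\eps})\one_{E_{\rm out}}]$, which is the recombination you need.
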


\begin{remark}\label{rem:no_odd_loops}
For~$F\in\mathcal F_\eps$, the loops in~$ \calL_{\{x_1,\dots,x_k\},\eps}$  satisfy~$\int_{{\rm int}(\ell)}F(z)\d z\in \tfrac\pi2\mathbb Z$. Furthermore, if~$\ell\in\calL_{\{x_1,\dots,x_k\},\eps}^{\rm odd}$, then~$\int_{{\rm int}(\ell)}F(z)\d z\in\tfrac{\pi}{2}+\pi\Z$. Consequently,
\begin{equation}\label{e.|A|<0,or1}
|\mathcal A_F(\calL_{\{x_1,\dots,x_k\},\eps})| \leq |\mathcal A_F(\calL_{\{x_1,\dots,x_k\},\eps}^{\rm odd})|:=\begin{cases} \ 0&\text{ if }\calL_{X,\eps}^{\rm odd}\ne\emptyset,\\
\ 1&\text{ if }\calL_{X,\eps}^{\rm odd}=\emptyset.
\end{cases}
\end{equation}
When~$\calL^\mathrm{odd}_{\{x_1,\dots,x_k\},\eps}=\emptyset$, each annulus ${\rm Ann}_{x_i}(\eps,\dst/2)$ is crossed from the inside to the outside by either a primal or dual arm. Here, a primal arm means an open path in the primal lattice contained in the annulus and connecting its inner boundary to its outer boundary, while a dual arm means an open path in the dual configuration. 
Applying a union bound, self-duality and mixing~\eqref{eq:mixing}, we conclude that 
\begin{align}\label{eq:h1b}
	\phi_\delta\big[|\mathcal A_F(\calL)|\big]  \leq \phi_\delta[\calL^\mathrm{odd}_{\{x_1,\dots,x_k\},\eps}=\emptyset]\le C^k \pi_1^\delta(\eps,\dst)^{k}
\end{align}
for some constant~$C$ independent of all quantities above.
Hence, for reasonable choices of the points~$x_1,\dots, x_k$, one should think of~$\pi_1^\delta(\eps,\dst)^{k}$ as the typical order of magnitude of~$\phi_\delta[\mathcal A_F(\calL)]$, and therefore the right-hand side of~\eqref{e.prop:forget small loops} should be perceived as a lower-order term. 
This heuristic is not entirely true, in particular because the sign of~$\mathcal A_F(\calL)$ may lead to cancellations, but it will suffice for our purposes.
\end{remark}

\begin{proof}
Fix~$d/2 >\eps\geq \delta >0$,~$X=\{x_1,\dots,x_k\}$ and~$F\in\calF_\eps$ as in the statement; all constants below are independent of these choices. 
Define~$\dst \geq R\geq r \geq \eps$ by
$\tfrac\eps r=\tfrac rR=\tfrac R\dst=\left(\tfrac \eps\dst\right)^{1/3}$.
For~$x\in\mathbb R^2$, introduce the events
\begin{align*}
E_{\rm out}&:=\big\{\forall
\ell\in \calL:\: \forall i\in\llbracket1,k\rrbracket,\,\ell\cap \partial B_R(x_i)=\emptyset\text{ or }\ell\cap \partial B_{\dst/2}(x_i)=\emptyset\big\},\\
E_x&:=\big\{\forall \ell\in \calL:\: \ell\cap B_\eps(x)=\emptyset \text{ or }\ell\cap \partial B_r(x)= \emptyset\big\}.
\end{align*}
The first event states that no loop crosses an annulus of  inner radius~$R$ and outer radius~$d/2$ around one of the~$x_i$ from the inside to the outside. The second states that no loop crosses the annulus of inner radius~$\eps$ and outer radius~$r$ around~$x$. 
The annulus in the second case is much smaller than the one in the first.


When,~$E_{\rm out}^\comple$ occurs, there exists~$i$ such that~${\rm Ann}_{x_i}(R,\dst/2)$ is crossed by an interface from inside to outside. 
Similarly, when one of the events~$E_{x_i}$ fails to occur, one of the annuli~${\rm Ann}_{x_i}(r,R)$ contains an interface crossing from inside to outside. 
By a similar reasoning as for~\eqref{eq:h1b} and using Proposition~\ref{p.pi_quasi-multi}, 
\begin{align}
	\phi_\delta \big[ \big(E_{\rm out}\cap E_{x_1}\cap \dots\cap E_{x_k}\big)^c  \cap \{ \calL_{\{0\},\eps}^\mathrm{odd}=\emptyset\} \big]
	&\le  k C^k \pi_1^\delta(\eps,\dst)^{k} \Big( \frac{\pi_2^\delta(\eps,r)}{\pi_1^\delta(\eps,r)} +  \frac{\pi_2^\delta(r,R)}{\pi_1^\delta(r,R)}\Big)   \\
	&\le  k\, C^k \pi_1^\delta(\eps,\dst)^{k} (\tfrac\eps\dst)^c,
	\label{eq:hh_22}
\end{align}
for constants~$C,c>0$.
In particular, due to~\eqref{e.|A|<0,or1}, 
\begin{align}
\Big| \phi_\delta\Ll[\mathcal A_F(\calL)\one_{E_{\rm out}\cap E_{x_1}\cap \dots\cap E_{x_k}}\Rr]  - \phi_\delta\Ll[\mathcal A_F(\calL)\Rr]  \Big| 
&\leq k\, C^k \pi_1^\delta(\eps,\dst)^{k} (\tfrac\eps\dst)^c \qquad\text{ and } \\
\Big| \phi_\delta\Ll[\mathcal A_F(\calL_{X,\eps})\one_{E_{\rm out}}\Rr]  - \phi_\delta\Ll[\mathcal A_F(\calL_{X,\eps})\Rr]  \Big| 
&\leq k\, C^k \pi_1^\delta(\eps,\dst)^{k} (\tfrac\eps\dst)^c.	\label{eq:hh_23}
\end{align}
Similarly, for the IIC measure,
\begin{align}
\phi_\delta[E_0^\comple\,|\,\calL_{\{0\},\eps}^\mathrm{odd}=\emptyset]&\le C\left(\tfrac\eps \dst\right)^{c},\label{eq:h1c}
\end{align}
with a potentially altered value of~$C$.

%

Let now~$\calL_i$ be the set of loops included in~$B_{\dst/2}(x_i)$, and~$\calL_{\rm out}$ be the loops in~$\calL$ that are in none of the~$\calL_i$ for~$1\le i\le k$. Write~$F_0:=\tfrac{1}{2\eps^2}\one_{B_\eps(0)}$. Then
\begin{align}
\label{eq:a}
\begin{split}
\phi_\delta\Ll[\mathcal A_F(\calL)\one_{E_{\rm out}\cap E_{x_1}\cap \dots\cap E_{x_k}}\Rr]&=\phi_\delta\Ll[\Ll(\prod_{i=1}^k\mathcal A_F(\calL_i)\Rr)\mathcal A_F(\calL_{\rm out})\one_{E_{\rm out}\cap E_{x_1}\cap\dots\cap E_{x_k}}\Rr]\\
&=\phi_\delta\Ll[\Ll(\prod_{i=1}^k\phi_\delta\Ll[\mathcal A_{F_0(\cdot\,-x_i)}(\calL_i)\one_{E_{x_i}}\,\big|\,\calL_{\rm out}\Rr]\Rr)\mathcal A_F(\calL_{\rm out})\one_{E_{\rm out}}\Rr],
\end{split}
\end{align}
where we used that~$E_{\rm out}$ is measurable in terms of~$\calL_{\rm out}$ and that, conditionally on~$\calL_{\rm out}$, the families of loops~$\calL_i$ are independent.

Fix  a realisation~$L_{\rm out}$ of~$\calL_{\rm out}$ that satisfies~$E_{\rm out}$.
Recall that~$\mathcal A_{F_0(\cdot\,-x_i)}(\calL_i) = 0$ if~$\calL_i \cap \calL_{X,\eps}^{\rm odd}\neq\emptyset$. 
Thus
\begin{align}
 &\phi_\delta\Ll[\mathcal A_{F_0(\cdot\,-x_i)}(\calL_i)\one_{E_{x_i}}\,\big|\,\calL_{\rm out} = L_{\rm out}\Rr] \\
&\qquad =\phi_\delta\Ll[\mathcal A_{F_0(\cdot\,-x_i)}(\calL_i)\one_{E_{x_i}}\,\big|\,\calL_i \cap \calL_{X,\eps}^{\rm odd}=\emptyset \text{ and }\calL_{\rm out} = L_{\rm out}\Rr]
\phi_\delta\Ll[\calL_i \cap \calL_{X,\eps}^{\rm odd}=\emptyset\,\big|\,\calL_{\rm out} = L_{\rm out}\Rr].
\label{eq:h1d}
\end{align}
Under the conditioning~$\calL_{\rm out} = L_{\rm out}$ and~$\calL_i \cap \calL_{X,\eps}^{\rm odd}=\emptyset$, 
the variable~$\mathcal A_{F_0(\cdot\,-x_i)}(\calL_i)\one_{E_{x_i}}$ is entirely determined by the configuration inside~$B_r({x_i})$. 
Thus, by Lemma~\ref{lem:mixing_IIC}, 
\begin{align}
 \Big|\phi_\delta\Ll[\mathcal A_{F_0(\cdot\,-x_i)}(\calL_i)\one_{E_{x_i}}\,\big|\,\calL_i \cap \calL_{X,\eps}^{\rm odd}=\emptyset \text{ and }\calL_{\rm out} = L_{\rm out}\Rr] - \phi_\delta\left[\mathcal A_{F_0}(\lpcfg)\one_{E_0}\,\big|\, \calL_{\{0\},\eps}^{\rm odd}=\emptyset\right]\Big|\le C\left(\tfrac \eps \dst\right)^{c},
\end{align}
for potentially altered values of the constants~$C, c > 0$. 
Additionally,~\eqref{eq:h1c} gives
\begin{equation*}
\Big|\phi_\delta[\mathcal A_{F_0}(\calL)\one_{E_0}|\calL_{\{0\},\eps}^{\rm odd}=\emptyset]-\bc^\delta(\eps)\Big|\le  C\left(\tfrac\eps \dst\right)^{c}.
\end{equation*}
Combining the two previous displays, we conclude that 
\begin{align}\label{eq:h1e}
\Big|\phi_\delta\Ll[\mathcal A_{F_0(\cdot\,-x_i)}(\calL_i)\one_{E_{x_i}}\,\big|\,\calL_i \cap \calL_{X,\eps}^{\rm odd}=\emptyset \text{ and }\calL_{\rm out} = L_{\rm out}\Rr] - \bc^\delta(\eps)\Big|
&\le2  C\left(\tfrac\eps \dst\right)^{c}.
\end{align}
Finally, also notice that 
\begin{align}
\phi_\delta\Ll[\Ll(\prod_{i=1}^k   \phi_\delta\Ll[\calL_i \cap \calL_{X,\eps}^{\rm odd}=\emptyset\,\big|\,\calL_{\rm out} = L_{\rm out}\Rr] \Rr)\mathcal A_F(\calL_{\rm out})\one_{E_{\rm out}}\Rr]
& = \phi_\delta[\mathcal A_F(\calL_{X,\eps})\one_{E_{\rm out}}] \qquad \text{ and } \\
\prod_{i=1}^k   \phi_\delta\Ll[\calL_i \cap  \calL_{X,\eps}^{\rm odd}=\emptyset\,\big|\,\calL_{\rm out} = L_{\rm out}\Rr]  &\leq C^k \pi_1^\delta(\eps,\dst)^{k}.
\end{align}

Thus, inserting~\eqref{eq:h1e} and~\eqref{eq:h1d} into~\eqref{eq:a} and averaging over all the possible realizations of~$L_{\rm out}$, we find that 
\begin{align}
\Big|\phi_\delta\Ll[\mathcal A_F(\calL)\one_{E_{\rm out}\cap E_{x_1}\cap \dots\cap E_{x_k}}\Rr] - \bc^\delta(\eps)^k \phi_\delta[\mathcal A_F(\calL_{X,\eps})\one_{E_{\rm out}}] \Big| 
\leq C^k \pi_1^\delta(\eps,\dst)^{k} \left(\tfrac\eps \dst\right)^{c}
 \end{align}
 with an increased value of~$C$. 

Using~\eqref{eq:hh_23} we conclude from the above that 
\begin{align}
\Big|\phi_\delta\Ll[\mathcal A_F(\calL)\Rr] - \bc^\delta(\eps)^k \phi_\delta[\mathcal A_F(\calL_{X,\eps})] \Big| 
\leq (2k+1) C^k \pi_1^\delta(\eps,\dst)^{k} \left(\tfrac\eps \dst\right)^{c},
 \end{align}
as required. 
\end{proof}

\section{The one-arm exponent and point-to-point correlations}\label{sec:one_arm}

For~$\eps>0$,~$x\in\R^2$ and~$y\notin B_{2\eps}(x)$, set
\begin{equation*}
 \widetilde \pi_1^\delta(x,y,\eps):=\phi_\delta[\calL_{\{x,y\},\eps}^{\rm odd}=\emptyset].
 \end{equation*}
In Proposition~\ref{prop:one arm}, the quantity above will be related to the GFF scaling limit using~\eqref{eq:Monday's formula} for a specific function~$F \in \calF_\eps$. Separately, we will relate~$ \widetilde \pi_1^\delta(x,y,\eps)$ to one-arm events in the random-cluster model. 
 

\begin{proposition}\label{prop:one arm}
There exists a constant~$c>0$ such that the following holds. 
For every~$\dst>0$, there exists a constant~$C = C(\dst)>0$ such that, 
for every~$\eps \in(0,d/2)$, every~$x\in\R^2$ satisfying~$\dst\leq |x|\leq 1/\dst$, and for sufficiently small~$\delta>0$,
\begin{align}\label{eq:prop_one_arm}
\Ll|\bc^\delta(\eps)^{2}\widetilde \pi_1^\delta(0,x,\eps)-a_{\eps}(x)\left(\tfrac\eps{|x|}\right)^{1/4}\Rr|\le C \pi_1^\delta(\eps,|x|)^2 \left(\tfrac\eps{|x|}\right)^c + o_\delta(1),
\end{align}
where~$a_{\eps}(x)>0$ is a quantity that converges to a constant~$a_0>0$ independent of~$x$ when~$\eps \to 0$
and~$o_\delta(1)$ is a quantity converging to~$0$ as~$\delta \to 0$. 
Moreover, both convergences are uniform over~$x$ satisfying~$\dst\leq |x|\leq 1/\dst$. 
\end{proposition}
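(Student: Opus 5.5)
Take $F := \frac{1}{2\eps^2}\big(\one_{B_\eps(0)} - \one_{B_\eps(x)}\big)$, which lies in $\calF_\eps$ with $k=2$, $q_1=1$, $q_2=-1$. It has zero mean, so Theorem~\ref{thm:Monday's formula} applies to it. The plan is to evaluate both sides of the M-formula for this $F$ and then use Proposition~\ref{prop:forget small loops} to pass from $\phi_\delta[\mathcal A_F(\calL)]$ to $\widetilde\pi_1^\delta(0,x,\eps)$.

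\textbf{Step 1: loops away from the balls.} Let $\ell\in\calL_{\{0,x\},\eps}$. Then $\int_{\itr(\ell)}F$ takes one of three values. It is $0$ if $\ell$ surrounds neither ball or both, since the two charges cancel. It is $\pm\pi/2$ if $\ell$ surrounds exactly one ball. So each loop contributes either $1$ or $\cos(\pi/2)=0$, and
\[
\mathcal A_F(\calL_{\{0,x\},\eps})=\one\{\calL^{\rm odd}_{\{0,x\},\eps}=\emptyset\}.
\]
Taking expectations gives $\phi_\delta[\mathcal A_F(\calL_{\{0,x\},\eps})]=\widetilde\pi_1^\delta(0,x,\eps)$.

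\textbf{Step 2: transfer to the full loop collection.} Apply Proposition~\ref{prop:forget small loops} with $k=2$ and with $d$ replaced by a constant $d'=d'(d)$ chosen so that $0,x\in B_{1/d'}(0)$ and $|x|\ge d'$. This gives
\[
\big|\bc^\delta(\eps)^2\,\widetilde\pi_1^\delta(0,x,\eps)-\phi_\delta[\mathcal A_F(\calL)]\big|\le C\,\pi_1^\delta(\eps,d')^2(\eps/d')^c.
\]
Since $|x|\asymp_d 1$, quasi-multiplicativity (Proposition~\ref{p.pi_quasi-multi}) and RSW let us replace $\pi_1^\delta(\eps,d')$ by $\pi_1^\delta(\eps,|x|)$ and $(\eps/d')^c$ by $(\eps/|x|)^c$, at the cost of a constant depending only on $d$.

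\textbf{Step 3: the Gaussian side.} By Theorem~\ref{thm:Monday's formula}, $\phi_\delta[\mathcal A_F(\calL)]\to\exp(I)$, where
\[
I=\frac{1}{2\pi^2}\iint F(z)F(z')\log|z-z'|\,\d z\,\d z'.
\]
The convergence is at fixed $\eps$ and $x$, which gives the $o_\delta(1)$ term. Rescale $z=\eps u$ and let $F_1:=\frac12\one_{B_1(0)}$, so each ball carries total charge $\pi/2$. Split $I$ into self-interaction and cross terms.
\begin{itemize}
\item Each self term equals $\frac{1}{2\pi^2}\big[(\pi/2)^2\log\eps+\iint F_1F_1\log|u-u'|\big]$.
\item The cross term equals $-\frac{2}{2\pi^2}(\pi/2)^2\big(\log|x|+r_\eps(x)\big)$, where $r_\eps(x)\to0$ uniformly on $d\le|x|\le1/d$. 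This holds because $\log|z-z'|=\log|x|+O(\eps/|x|)$ for $z\in B_\eps(0)$, $z'\in B_\eps(x)$.
\end{itemize}
Adding up,
\[
I=\tfrac14\log\eps-\tfrac14\log|x|+\kappa_0-\tfrac14 r_\eps(x),\qquad \kappa_0:=\frac{1}{\pi^2}\iint F_1F_1\log|u-u'|.
\]
Hence $\exp(I)=a_\eps(x)(\eps/|x|)^{1/4}$ with $a_\eps(x):=e^{\kappa_0}e^{-r_\eps(x)/4}$. This converges to $a_0=e^{\kappa_0}>0$ uniformly in $x$.

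\textbf{Main obstacle: uniformity in $\delta$.} The M-formula limit is taken for each fixed $F$, i.e.\ each fixed $(\eps,x)$, while the statement asks for a bound uniform in $x$ "for $\delta$ small enough". The first thing to try is a compactness argument. The family of functions $F$ with $x$ in the compact annulus $\{d\le|x|\le1/d\}$ is continuous in $x$ in $W^{s,2}$, and the characteristic functions are equicontinuous in the test function, via tightness of the height function in $W^{-s,2}$ from~\cite{DumKozLamMan26}. Together these should give convergence uniform over $x$ at fixed $\eps$.

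If that is not available directly, fall back on a finite net. Take finitely many points $x$, and note that $\phi_\delta[\mathcal A_F(\calL)]$ varies little when $x$ moves by $o(\eps)$, up to errors bounded as in Remark~\ref{rem:no_odd_loops}. Either route controls the $o_\delta(1)$ term uniformly.

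Combining Steps 2 and 3 then yields \eqref{eq:prop_one_arm}.
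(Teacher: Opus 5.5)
Your proposal is correct and follows essentially the paper's proof: the same test function $F$, the same identity $\phi_\delta[\mathcal A_F(\calL_{\{0,x\},\eps})]=\widetilde\pi_1^\delta(0,x,\eps)$, then Proposition~\ref{prop:forget small loops} and the M-formula, with your $\kappa_0$ and $r_\eps(x)/4$ equal to the paper's $2b_0$ and $2b_\eps(x)$. For uniformity in $x$, the paper applies Ascoli--Arzel\`a using equicontinuity quoted from \cite[Thm.~4.5]{DumKozLamMan26}; your tightness-plus-$W^{s,2}$-continuity argument derives that same equicontinuity directly, so your primary route suffices and the loosely justified finite-net fallback is not needed.
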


\begin{proof}
Fix~$d>0$ and~$\eps \in (0,d/2)$. 
For~$x\in\R^2$ satisfying~$\dst\leq |x|\leq 1/\dst$, consider the test function~$F = \frac{1}{2\eps^2}\one_{B_\eps(0)} - \frac{1}{2\eps^2}\one_{B_\eps(x)}$. 
The right-hand side in~\eqref{eq:Monday's formula} for this function~$F$ may be computed explicitly and yields
\begin{align}\label{e.GFF_comput1}
    \exp\big( \tfrac{1}{2\pi^2} \iint_{\R^2\times\R^2} F(z)F(z')\log|z-z'| \d z\d z' \big) = a_{\eps}(x)\big(\tfrac\eps{|x|}\big)^{1/4},
\end{align}
where 
$a_\eps(x)=e^{2b_0-2b_\eps(x)}$, with
\begin{align}\label{eq:uu}
    b_0  = \tfrac{1}{8\pi^2}\!\!\iint_{B_1(0)\times B_1(0)}\!\!\log|\rz-\rz'|\d \rz\d \rz'
    \, \text{ and } \, 
    b_\eps(x)  = \tfrac{1}{8\pi^2}\!\!\iint_{B_1(0)\times B_1(0)}\!\!\log\tfrac{|x+\eps(\rz-\rz')|}{|x|}\d\rz\d\rz'.
\end{align}

By Theorem~\ref{thm:Monday's formula}, 
\begin{align}\label{eq:monday_consequ_pi1}
	\lim_{\delta\to0}\phi_\delta[\mcl A_F(\mcl L)] = a_{\eps}(x)\left(\tfrac\eps{|x|}\right)^{1/4}.
\end{align}
Note the implicit dependence on~$x$ of the left-hand side above via its use in the definition of~$F$. 
Thus, when treating~$x$ as a variable,~\eqref{eq:monday_consequ_pi1} is a pointwise convergence.
It may be upgraded to a uniform convergence over the set~$\{x \in \bbR^2:  \dst\leq |x|\leq 1/\dst\}$ using the Ascoli--Arzel\`a theorem.
Indeed, \cite[Thm. 4.5]{DumKozLamMan26} proves that the functions~$x \mapsto \phi_\delta[\mcl A_F(\mcl L)]$ for~$\delta > 0$ small enough are equicontinuous over this set.  
%
%
%
%
%
%

It is straightforward to see
\begin{align*}
\phi_\delta[\mathcal A_F(\calL_{\{0,x\},\eps})]=\phi_\delta[\calL^{\rm odd}_{\{0,x\},\eps}=\emptyset]=\widetilde\pi_1^\delta(0,x,\eps).
\end{align*}
Using the above,~\eqref{eq:monday_consequ_pi1} and Proposition~\ref{prop:forget small loops}, we obtain~\eqref{eq:prop_one_arm}.
The convergence of~$a_\eps(x)$ as~$\eps$ tends to $0$ is obtained from its explicit form~\eqref{eq:uu}. 
\end{proof}

We now transfer the information from~$\widetilde\pi_1^\delta(x,y,\eps)$ to connection probabilities in the random-cluster model. This part only relies on~\eqref{eq:RSW}  and its implications.

\begin{corollary}\label{c.two_point}
Under the same setup as in Proposition~\ref{prop:one arm}, we have 
\begin{align}\label{eq:b1}
\Ll|\bc^\delta(\eps)^{2}\phi_\delta[B_\eps(0)\longleftrightarrow B_\eps(x)]-\tfrac{a_{\eps}(x)}{2}\big(\tfrac\eps{|x|}\big)^{1/4}\Rr|&\le C\pi^\delta_1(\eps,|x|)^2\big(\tfrac\eps{|x|}\big)^c+o_\delta(1).
\end{align}
As a consequence, there exist~$c',C'>0$ such that for every~$x\neq 0$ and every~$\eps>0$ sufficiently small,
\begin{equation}\label{eq:b2}
c'\big(\tfrac\eps{|x|}\big)^{1/8}\le \liminf_{\delta\rightarrow0}\pi_1^\delta (\ep,|x|)\le\limsup_{\delta\rightarrow0}\pi_1^\delta (\ep,|x|)\le C'\big(\tfrac\eps{|x|}\big)^{1/8}.
\end{equation}
\end{corollary}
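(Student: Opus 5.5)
The plan is to compare the event $\{B_\eps(0)\lra B_\eps(x)\}$ with $\{\calL^{\rm odd}_{\{0,x\},\eps}=\emptyset\}$ and then read off \eqref{eq:b1} from Proposition~\ref{prop:one arm}.

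\emph{Decomposing the odd-loop event.} On $\{\calL^{\rm odd}_{\{0,x\},\eps}=\emptyset\}$, no loop separates $B_\eps(0)$ from $B_\eps(x)$. Hence the two balls are joined either by an open primal path or by an open dual path. Call these events $P$ and $D$; their union is $\{\calL^{\rm odd}_{\{0,x\},\eps}=\emptyset\}$. Self-duality of $\phi_\delta$ at $p_c$ gives $\phi_\delta[D]=\phi_\delta[P]+o_\delta(1)$. The $o_\delta(1)$ comes from the half-lattice shift between the primal and dual balls, which costs only a boundary effect. I would control that error by the RSW estimate \eqref{eq:RSW}, or simply absorb it by comparing balls of radius $\eps$ and $\eps\pm\delta$. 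It follows that
\[
\widetilde\pi_1^\delta(0,x,\eps)=2\phi_\delta[P]-\phi_\delta[P\cap D]+o_\delta(1).
\]

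\emph{The overlap $P\cap D$.} The key term is $\phi_\delta[P\cap D]$. If both a primal and a dual connection exist between the balls, then around $B_\eps(0)$ the annulus ${\rm Ann}_0(\eps,|x|/2)$ is crossed by both a primal arm and a dual arm, and the same holds around $x$. Mixing \eqref{eq:mixing} and quasi-multiplicativity then give
\[
\phi_\delta[P\cap D]\les \pi_2^\delta(\eps,|x|)^2\le \pi_1^\delta(\eps,|x|)^2\big(\tfrac{\eps}{|x|}\big)^{c},
\]
using the last statement of Proposition~\ref{p.pi_quasi-multi}. Multiplying by $\bc^\delta(\eps)^2\le1$ (Lemma~\ref{l.kappa}) and inserting into \eqref{eq:prop_one_arm} yields \eqref{eq:b1}. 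I expect this step to be the main obstacle, in particular making sure that both the primal and the dual connection really produce two-arm events in each annulus, up to uniformly bounded constants.

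\emph{Proof of \eqref{eq:b2}.}
\begin{itemize}
\item Upper bound on $\phi_\delta[P]$: the event $P$ requires one-arm events in both annuli around $0$ and around $x$. Mixing gives $\phi_\delta[P]\les \pi_1^\delta(\eps,|x|)^2$.
\item Lower bound on $\phi_\delta[P]$: join the two arms using RSW and FKG, which gives $\phi_\delta[P]\ges \pi_1^\delta(\eps,|x|)^2$.
\end{itemize}
So $\phi_\delta[P]\asymp \pi_1^\delta(\eps,|x|)^2$. Combining this with \eqref{eq:b1} and $c\le\bc^\delta(\eps)\le1$ from Lemma~\ref{l.kappa}, we get
\[
\pi_1^\delta(\eps,|x|)^2\big(1-C(\tfrac{\eps}{|x|})^{c}\big)\les a_\eps(x)\big(\tfrac{\eps}{|x|}\big)^{1/4}+o_\delta(1).
\]
A corresponding lower bound also holds.

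For $\eps$ small enough, the error factor $C(\eps/|x|)^c$ is absorbed, and $a_\eps(x)$ is bounded above and below since it converges to $a_0>0$. Taking $\delta\to0$ then gives \eqref{eq:b2} for $|x|$ in compacts.

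To pass to general $x\neq0$, use the scale invariance of the lattice family: $\pi_1^\delta(\eps,|x|)=\pi_1^{\delta/|x|}(\eps/|x|,1)$. This reduces everything to $|x|=1$, with the uniformity supplied by Proposition~\ref{prop:one arm}.
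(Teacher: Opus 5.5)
Your proposal is correct and follows essentially the same route as the paper. You use the same steps: the decomposition $\{\calL^{\rm odd}_{\{0,x\},\eps}=\emptyset\}=P\cup D$, self-duality for $\phi_\delta[D]-\phi_\delta[P]=o_\delta(1)$, insertion into Proposition~\ref{prop:one arm}, and then $\phi_\delta[P]\asymp\pi_1^\delta(\eps,|x|)^2$ together with Lemma~\ref{l.kappa} to deduce \eqref{eq:b2}. The only difference is the overlap term: the paper skips the two-arm localisation and mixing and uses FKG directly ($P$ increasing, $D$ decreasing, so $\phi_\delta[P\cap D]\le\phi_\delta[P]\phi_\delta[D]\le C(\eps/|x|)^c\phi_\delta[P]$), which gives the same bound more quickly.
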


\begin{proof}
We focus first on the proof of~\eqref{eq:b1}.
When~$\calL_{\{0,x\},\eps}^{\rm odd}=\emptyset$,~$B_\eps(0)$ is connected to~$B_\eps(x)$ in the primal or dual configurations. We deduce that 
\begin{align*}
\left|\widetilde\pi_1^\delta(0,x,\eps)-\phi_\delta[B_\eps(0)\longleftrightarrow B_\eps(x)]-
\phi_\delta[B_\eps(0)\stackrel{*}\longleftrightarrow B_\eps(x)]\right|
= \phi_\delta\left[\begin{array}{l}B_\eps(0)\stackrel{*}\longleftrightarrow B_\eps(x)\\ B_\eps(0)\longleftrightarrow B_\eps(x)\end{array}\right].
\end{align*}
By self-duality, \eqref{eq:RSW} and the FKG inequality, there are constants~$c,C=C(d)>0$ independent of~$\delta  < \eps$ and~$x$ as above such that
\begin{align}
\Big|\phi_\delta[B_\eps(0)\longleftrightarrow B_\eps(x)]-&\phi_\delta[B_\eps(0)\stackrel{*}\longleftrightarrow B_\eps(x)]\Big|\le C\delta^c, \label{e.delta^c} \\
\phi_\delta\left[\begin{array}{l}B_\eps(0)\stackrel{*}\longleftrightarrow B_\eps(x)\\ B_\eps(0)\longleftrightarrow B_\eps(x)\end{array}\right]&\le \phi_\delta[B_\eps(0)\longleftrightarrow B_\eps(x)]\phi_\delta[B_\eps(0)\stackrel{*}\longleftrightarrow B_\eps(x)] \\
&\le C\left(\tfrac\eps{|x|}\right)^c\phi_\delta[B_\eps(0)\longleftrightarrow B_\eps(x)] \leq C^2 \pi_1^\delta(\eps,|x|)^2 \left(\tfrac\eps{|x|}\right)^c.
\end{align}
Combining these, we find that
\begin{align*}
\left|\widetilde\pi_1^\delta(0,x,\eps)-2\phi_\delta[B_\eps(0)\longleftrightarrow B_\eps(x)]\right| \leq C^2 \pi_1^\delta(\eps,|x|)^2 \left(\tfrac\eps{|x|}\right)^c+ C\delta^c.
\end{align*}
Inserting the above into Proposition~\ref{prop:one arm}, we obtain~\eqref{eq:b1}.

We turn to the proof of~\eqref{eq:b2}.
As a consequence of~\eqref{eq:RSW} (see for instance \cite{duminil2022planar}),
\begin{equation*}
	\phi_\delta[B_\eps(0)\longleftrightarrow B_\eps(x)]\asymp \pi_1^\delta (\ep,|x|)^2,
\end{equation*}
uniformly in~$\delta$ small enough. 
This, together with~\eqref{eq:b1}, readily implies~\eqref{eq:b2}.
\end{proof}

\begin{proof}[Proof of~\eqref{eq:a1}]
Fix a point~$x$ with~$|x|=1$. Substitute~$N^{-1}$ for~$\delta$ in~\eqref{eq:b2} and take~$N \to \infty$. 
Then, Corollary~\ref{c.two_point} gives constants~$c',C'>0$ such that, for sufficiently small~$\eps$,
\begin{align*}
    c'\eps^{1/8}\le \liminf_{N\to\infty}\pi_1 (\ep N,N)\le\limsup_{N\to\infty}\pi_1(\ep N,N)\le C'\eps^{1/8}.
\end{align*}
The quasi-multiplicativity~\eqref{eq:quasi_pi1} of~$\pi_1$ allows us to apply Lemma~\ref{l.exp_quasi_multi} and  deduce~\eqref{eq:a1}. \end{proof}

\begin{proof}[Proof of Proposition~\ref{prop:limit_two_pt}]
Fix~$x \in \bbR^2 \setminus \{0\}$ as well as~$\eps > \delta > 0$. All constants below are uniform in~$\eps$,~$\delta$ and~$x$ in compacts of~$\bbR^2 \setminus \{0\}$.
Write
\begin{equation*}
\phi_\delta[0\longleftrightarrow x_\delta]
=\phi_\delta[0\longleftrightarrow x_\delta \,|\,B_\eps(0)\longleftrightarrow B_\eps(x)]\phi_\delta[B_\eps(0)\longleftrightarrow B_\eps(x)].
\end{equation*}

We claim the existence of constants~$C,c\in(0,\infty)$ such that
\begin{equation}\label{e.|-|<Ceps^c}
	\Big|\phi_\delta[0\longleftrightarrow x_\delta\,|\,B_\eps(0)\longleftrightarrow B_\eps(x)]-g_\eps(\delta)^2\Big|
	\le C\eps^c \phi_\delta[0\longleftrightarrow x_\delta\,|\,B_\eps(0)\longleftrightarrow B_\eps(x)],
\end{equation}
where 
\begin{equation}\label{e.g_eps(delta)=}
g_\eps(\delta):=\phi_\delta[0\longleftrightarrow \infty\,|\,B_\eps(0)\longleftrightarrow\infty] \asymp \pi_1(\eps /\delta).
\end{equation}
Equation~\eqref{e.|-|<Ceps^c} is obtained by a mixing property similar to Lemma~\ref{lem:mixing_IIC}, but where the conditioning is on having a primal arm rather than~$\calL_{\{0\},\eps}^{\rm odd}=\emptyset$ (which is equivalent to the existence of a primal or dual arm). 
The exact mixing result was formulated in~\cite{basu2017kesten, garban2018scaling, kesten1986incipient, jarai2003invasion}. We will not detail it here.

Overall, we deduce that 
\begin{equation}\label{e.g_eps(delta)=1}
\Big|\phi_\delta[0\longleftrightarrow x_\delta]-g_\eps(\delta)^2\phi_\delta[B_\eps(0)\longleftrightarrow B_\eps(x)]\Big|\le C\eps^c\phi_\delta[0\longleftrightarrow x_\delta].
\end{equation}

By Corollary~\ref{c.two_point}, for any fixed~$\eps > 0$, 
\begin{equation*}
	\lim_{\delta\to0}\frac{2\bc^\delta (\eps)^2}{\eps^{1/4}a_\eps(x)}\phi_\delta[B_\eps(0)\longleftrightarrow B_\eps(x)] = |x|^{-1/4}.
\end{equation*}
From the above and~\eqref{eq:a1}, it follows that we may choose~$\eps (\delta) \geq \delta$ for each~$\delta > 0$ with $\eps(\delta)$ tending to $0$ as $\delta$ tends to $0$, and
\begin{gather*}
    \pi_1(\eps(\delta) /\delta) =\delta^{o(1)} \,\big(\tfrac{\delta}{\eps(\delta)}\big)^{1/8},
    \\
    \lim_{\delta\to0}\frac{2\bc^\delta (\eps(\delta))^2}{\eps(\delta)^{1/4}a_{\eps(\delta)}(x)}\phi_\delta\big[B_{\eps(\delta)}(0)\longleftrightarrow B_{\eps(\delta)}(x)\big]= |x|^{-1/4}.
\end{gather*}
Furthermore, the choice of~$\eps$ and the last convergence above may be uniform in~$x$ in compacts of~$\bbR^2 \setminus \{0\}$. 

Set 
\[
f(\delta):=g_{\eps(\delta)}(\delta)\, \left(\frac{2\bc^\delta (\eps(\delta))^2}{\eps(\delta)^{1/4} a_0 }\right)^{-1/2},
\]
where~$a_0 = \lim_{\delta\to0}a_{\eps(\delta)}(x)$ is given in Proposition~\ref{prop:one arm} and is independent of~$x$. 
Combining the previous two displays with~\eqref{e.g_eps(delta)=1} gives 
\begin{align}\label{eq:limit_two_pt2}
    \lim_{\delta\to 0} f(\delta)^{-2}\rcE_{\delta}[0\longleftrightarrow x_\delta]
  =  \lim_{\delta\to 0}   \big(\tfrac{ g_{\eps(\delta)}(\delta)}{f(\delta)}\big)^2\phi_\delta\big[B_{\eps(\delta)}(0)\leftrightarrow B_{\eps(\delta)}(x)\big]
   = |x|^{-1/4},
\end{align}
as required.
Finally, using the estimate on~$g_\eps(\delta)$ in~\eqref{e.g_eps(delta)=} together with the properties of~$\eps(\delta)$, we conclude that~$f(\delta)=\delta^{1/8+o(1)}$.

Note that~$f$ may appear to depend on the choice of~$\delta \mapsto\eps(\delta)$ above, which in turn depends on the compact set in which~$x$ is chosen. However, by considering~\eqref{eq:limit_two_pt2}
for the fixed point~$x= (1,0)$, we conclude that all definitions of~$f$ coincide. 
\end{proof}

\section{Two-arm exponent}\label{sec:two_arms} 

For~$x,y\in\mathbb R^2$ and~$\eps \geq \delta > 0$, consider the following sets of loops: 
\begin{align}
\calL^{\rm odd}&:=\calL_{\{0,x,y,x+y\},\eps}^{\rm odd}, \notag\\
\calL^2&:=\{\ell\in\calL_{\{0,x,y,x+y\},\eps}\text{ surrounding exactly 2 of the balls~$B_\eps(0)$,~$B_\eps(y)$,~$B_\eps(x)$,~$B_\eps(x+y)$}\},\label{e.calN=}
\\
\calL^2_1&:=\{\ell\in \calL^2\text{ surrounding exactly one of the balls~$B_\eps(0)$ or~$B_\eps(y)$}\}, \label{e.calN_0=}
\end{align}
as well as, for~$k\geq1$, the probabilities 
\begin{align}
\widetilde\pi_{2k}^\delta(x,y,\eps)&:=\phi_\delta[|\calL^2_1|\ge k,\calL^{\rm odd}=\emptyset],\label{e.pi_2(x,y,eps,delta)=}
\\
\widetilde\Delta^\delta(x,y,\eps)&:=
\phi_\delta\left[(-1)^{|\calL^2|}\one_{\{\calL^2_1=\calL^{\rm odd}=\emptyset\}}\right] \label{e.Delta(x,y,eps,delta)=}.
\end{align}
Finally, for~$R > r >0$ define the~$2k$-arm probability as 
\begin{align}\label{eq.2k_arm_event}
\pi_{2k}^\delta(r,R)=\phi_\delta[ \text{$\Lambda_r^\delta$ is connected to~$(\Lambda_R^\delta)^c$ by~$k$ disjoint clusters of~$\Ann(r,R)$}].
\end{align}
The quasi-multiplicativity of Proposition~\ref{p.pi_quasi-multi} also applies to~$\pi_{2k}$, with constants depending on~$k$. 

\begin{figure}[H]
    \centering
    \includegraphics[width=0.6\textwidth]{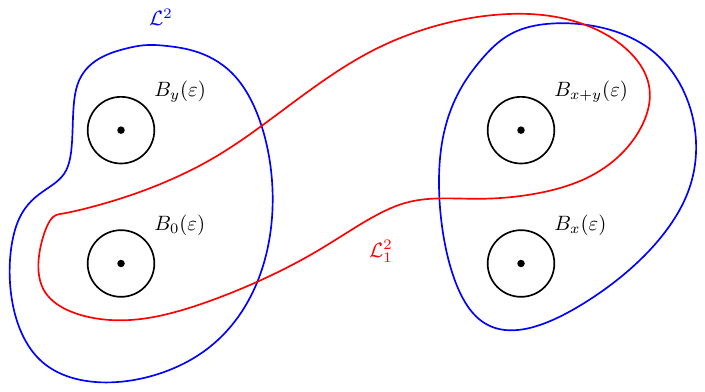}
    \caption{The blue and red loops are typical loops in~$\calL^2$ and~$\calL^2_1$, respectively.}
    \label{fig:four_ball}
\end{figure}

The quantities $\widetilde\pi_{2k}^\delta$ and $\widetilde\Delta^\delta$ will be estimated using the GFF via Theorem~\ref{thm:Monday's formula} 
and will be related to $\pi_{2k}$ and $\Delta$, respectively. 
We start with the following lemma which captures the link between~$\widetilde\pi_{2k}$ and~$\pi_{2k}$.

\begin{lemma}\label{l.tildePi2k_estimate}
Fix~$k \ge 1$. For any~$\eps \geq \delta > 0$ and~$x,y \in \bbR^2$ with~$|x|/16 \geq |y| \geq 4\eps$,
\begin{align}
\widetilde\pi_{2k}^\delta(x,y,\eps) \asymp \pi_{2k}^\delta(|x|,|y|)^2\pi_1^\delta(\eps,|y|)^4.
\end{align}
\end{lemma}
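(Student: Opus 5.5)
We read $\pi_{2k}^\delta(|x|,|y|)$ as the $2k$-arm probability between scales $|y|$ and $|x|$, i.e.\ $\pi_{2k}^\delta(|y|,|x|)$ in the notation of \eqref{eq.2k_arm_event}. The plan is to translate the loop event defining $\widetilde\pi_{2k}^\delta(x,y,\eps)$ into a product of local arm events. There are two kinds of region: four ``small'' regions $B_{|y|/4}(z)$ for $z\in\{0,y,x,x+y\}$, and two ``intermediate'' annuli $\Ann_0(2|y|,|x|/4)$ and $\Ann_x(2|y|,|x|/4)$. We then use the mixing property (Proposition~\ref{prop:mixing}) and quasi-multiplicativity (Proposition~\ref{p.pi_quasi-multi}, extended to $\pi_{2k}$) to show the probability is comparable to the product of the probabilities of these local events. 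The constraint $|x|/16\ge|y|\ge4\eps$ guarantees that all these regions are well separated at the appropriate scales.

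\emph{Upper bound.} First, on $\{\calL^{\rm odd}=\emptyset\}$, no loop avoiding the balls surrounds exactly one of them. As in Remark~\ref{rem:no_odd_loops}, each annulus $\Ann_z(\eps,|y|/4)$ is then crossed by a primal or a dual arm, which costs $\pi_1^\delta(\eps,|y|)$ per ball by self-duality. Second, a loop in $\calL^2_1$ surrounds exactly one of $B_\eps(0),B_\eps(y)$ and exactly one of $B_\eps(x),B_\eps(x+y)$. It therefore separates $0$ from $x$, so it must cross $\Ann_0(2|y|,|x|/4)$ from inside to outside and back, and likewise $\Ann_x(2|y|,|x|/4)$. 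Having $k$ such disjoint loops produces $2k$ crossings of each annulus by interfaces. Since interfaces alternate between primal and dual clusters, this forces $k$ disjoint primal clusters crossing each annulus, whose probability is $\lesssim\pi_{2k}^\delta(2|y|,|x|/4)\asymp\pi_{2k}^\delta(|y|,|x|)$. All six events are measurable with respect to edges in disjoint regions separated by a factor of at least $2$ in scale. Iterating \eqref{eq:mixing}, for instance conditioning on the outside of each region and using that the relevant arm events hold uniformly in boundary conditions up to constants, bounds the probability of their intersection by a constant times the product. Quasi-multiplicativity then absorbs the mismatched radii.

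\emph{Lower bound.} This is the step I expect to be the main obstacle, because the events involved (alternating arms, absence of odd loops) are not monotone, so FKG cannot be applied directly. I would follow the standard separation-of-arms strategy of \cite{duminil2022planar}. Conditionally on the $2k$-arm event in each intermediate annulus, with positive probability (by \eqref{eq:RSW} and the separation lemma) the arms land in prescribed, well-separated landing zones on both boundaries. Similarly, each one-arm event around a small ball can be required to end well inside a prescribed zone. Then, using \eqref{eq:RSW} and FKG inside the gluing regions $\Ann_0(|y|/4,2|y|)$, $\Ann_x(|y|/4,2|y|)$ and the large-scale region $\Lambda_{2|x|}\setminus(\Lambda_{|x|/4}\cup\Lambda_{|x|/4}(x))$, we connect the pieces at constant cost. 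The construction will be as follows. Primal arms from $B_\eps(0)$ and $B_\eps(y)$ join a common primal cluster, which becomes the innermost primal cluster of the $2k$-arm configuration around $0$; the same holds around $x$. The $k$ primal crossing clusters of the two annuli are paired into $k$ nested primal ``tubes'' running from the neighbourhood of $0$ to that of $x$, separated by dual paths, and everything is closed off by a primal circuit surrounding all four balls.

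It then remains to check that in this configuration $\calL^{\rm odd}=\emptyset$ (every loop avoiding the balls surrounds $0$, $2$ or $4$ of them) and $|\calL^2_1|\ge k$ (the interfaces bounding the $k$ tubes surround exactly one ball from each pair). This is done by choosing the tubes to pass between $0$ and $y$, and between $x$ and $x+y$. It is this bookkeeping of which loops enclose which balls, together with the arm-separation gluing, that I expect to require the most care. The final constants depend only on $k$.
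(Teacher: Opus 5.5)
Your overall route is the paper's. The upper bound uses four one-arm events (scale $\eps$ to $|y|$) and two $2k$-arm events (scale $|y|$ to $|x|$), combined by mixing and quasi-multiplicativity. The lower bound uses separation of arms and RSW gluing, which the paper leaves undetailed. In the upper bound one justification is wrong, although the conclusion holds. A loop of $\calL^2_1$ need not separate $0$ from $x$ (it may surround $B_\eps(0)$ and $B_\eps(x)$), and separating $0$ from $x$ would not by itself force the loop into $\Lambda_{2|y|}$. The correct reason is that such a loop separates $B_\eps(0)$ from $B_\eps(y)$ and $B_\eps(x)$ from $B_\eps(x+y)$. It therefore meets the segments $[0,y]\subset\Lambda_{2|y|}$ and $[x,x+y]\subset\Lambda_{2|y|}(x)$, which yields the two crossings of each intermediate annulus.

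That same fact breaks your lower-bound construction, and this is a genuine gap. Loops of $\calL_{\{0,x,y,x+y\},\eps}$ avoid the balls and cannot cross open primal or dual edges. So as soon as $B_\eps(0)$ and $B_\eps(y)$ are joined by a primal path, no such loop separates them, and $\calL^2_1=\emptyset$. Your recipe glues the arms of $B_\eps(0)$ and $B_\eps(y)$ into a common primal cluster (and likewise for $x$ and $x+y$). The configuration you build therefore never contributes to $\widetilde\pi_{2k}^\delta(x,y,\eps)$, and asking afterwards that the tubes ``pass between $0$ and $y$'' contradicts this gluing.

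The two balls of each pair must be attached to \emph{different} strips of the $2k$-arm configuration. For instance:
\begin{itemize}
\item let the $k$ nested interfaces turn around $B_\eps(0)$ between it and $B_\eps(y)$, and around $B_\eps(x)$ between it and $B_\eps(x+y)$;
\item attach $B_\eps(0)$ and $B_\eps(x)$ to the region inside the innermost interface, and $B_\eps(y)$, $B_\eps(x+y)$ to the region outside the outermost one;
\item take each ball's arm primal or dual according to the type of the region it joins (same cost $\pi_1^\delta(\eps,|y|)$ by self-duality);
\item add a circuit of that type in $\Ann_z(\eps,2\eps)$ around each ball, so that the interfaces avoid the balls.
\end{itemize}
Then each of the $k$ interfaces surrounds exactly $B_\eps(0)$ and $B_\eps(x)$, giving $|\calL^2_1|\ge k$. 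Moreover $\calL^{\rm odd}=\emptyset$: $B_\eps(0)$ is connected to $B_\eps(x)$ and $B_\eps(y)$ to $B_\eps(x+y)$, so no admissible loop separates either pair. This is the picture behind the paper's proof, in which each ball is linked by a primal or dual path to one of the loops of $\calL^2_1$.
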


\begin{proof}[Proof of Lemma~\ref{l.tildePi2k_estimate}]
Fix~$k$,~$\eps$,~$\delta$,~$x$ and~$y$ as above and let~$r = |y|$ and~$R = |x|$. 
All constants below are allowed to depend on~$k$, but not on any of the other quantities fixed above. 

We start with the lower bound on~$\widetilde\pi_{2k}^\delta(x,y,\eps)$.
Observe that, for any configuration contributing to~$\widetilde\pi_{2k}^\delta(x,y,\eps)$ the following events need to occur (see also Figure~\ref{fig:pikexplain.pdf}):
\begin{itemize}
\item for each~$v \in \{0,x,y,x+y\}$,~$\Lambda_\eps^\delta(v)$ is connected to~$\Lambda_{r/4}^\delta(v)^c$ by either a primal or dual path;
\item $\Lambda_{4r}^\delta$ is connected to~$(\Lambda_{R/4}^\delta)^c$ by~$k$ disjoint clusters of~$\Ann(r,R/4)$;
\item $\Lambda_{4r}^\delta(x)$ is connected to~$\Lambda_{R/4}^\delta(x)^c$ by~$k$ disjoint clusters of~$\Ann_x(r,R/4)$.
\end{itemize}
Write~$E$ for the intersection of the events above. 
Observe that they are defined in terms of disjoint parts of the space, separated by annuli of outer radius twice the inner radius. 
It follows from~\eqref{eq:mixing} that
\begin{align}
	\widetilde\pi_{2k}^\delta(x,y,\eps) \leq \phi_\delta(E)  
	\les   \pi_{2k}^\delta(4r,R/4)^2\pi_1^\delta(\eps,r/4)^4 
	\les   \pi_{2k}^\delta(r,R)^2\pi_1^\delta(\eps,r)^4.
\end{align}
The second inequality is due to Proposition~\ref{p.pi_quasi-multi}.

\begin{figure}[h]
    \centering
    \includegraphics[width=0.7\linewidth]{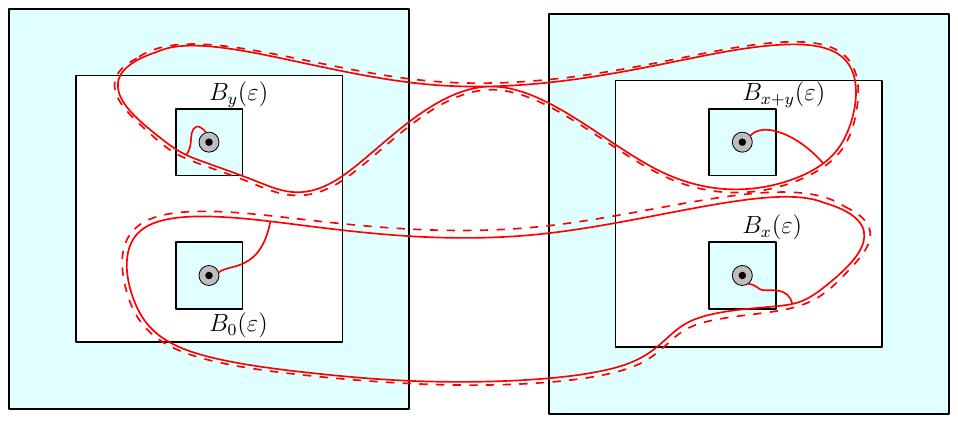}
    \caption{A configuration contributing to~$\widetilde\pi_{2k}^\delta(x,y,\eps)$ with~$k=2$. There are two loops surrounding two balls, with one centred at~$0$ or~$y$ and the other at~$x$ or~$x+y$. 
    As there are no loops surrounding a single ball, each ball is connected to one of the loops above by either a primal or dual path. 
    The annuli in the definition of~$E$ are shaded blue. }
    \label{fig:pikexplain.pdf}
\end{figure}

Conversely, it is a consequence of~\eqref{eq:RSW} and the separation-of-arms theory that, if~$E$ occurs, the different arms may be connected with positive probability to produce configurations contributing to~$\widetilde\pi_{2k}^\delta(x,y,\eps)$. We will not detail this step as it is tedious but classical; see~\cite{Kes87a, Nol08, duminil2022planar}. We conclude that
\begin{align}
	\widetilde\pi_{2k}^\delta(x,y,\eps) 
	\ges  \phi_\delta(E)  
	\ges  \pi_{2k}^\delta(4r,R/4)^2\pi_1^\delta(\eps,r/4)^4 
	\ges  \pi_{2k}^\delta(r,R)^2\pi_1^\delta(\eps,r)^4.
\end{align}
The second inequality is due to~\eqref{eq:mixing} and the last due to the inclusion of events.
\end{proof}

Similarly to Proposition~\ref{prop:one arm}, we relate~$\widetilde\pi_2^\delta(x,y,\eps)$ to a quantity computed in the GFF scaling limit via Theorem~\ref{thm:Monday's formula}. 
Contrary to Proposition~\ref{prop:one arm}, we do not claim any uniformity in~$x$ and~$y$ below, as this is not necessary for our purposes. 

\begin{proposition}\label{prop:1}
There exist constants~$C,c>0$ such that for every~$x,y\in \mathbb R^2$ satisfying~$\dst:=\min\{|x|,|y|,|x-y|, |x+y|\}>0$ and~$0<\delta<\eps<\dst/2$, we have 
\begin{align}\label{eq:pi_2+Delta}\qquad
\Ll|\bc^\delta(\eps)^4\Ll(\widetilde\pi_2^\delta(x,y,\eps)+\widetilde\Delta^\delta(x,y,\eps)\Rr)
-a_\eps(x,y)\eps^{1/2}\left(\tfrac{ |y|^2}{|x|^2|x-y|\,|x+y|}\right)^{1/4}\Rr|&\le   C\pi_1^\delta(\eps,\dst)^4\left(\tfrac{\eps}{\dst}\right)^c + o_\delta(1),
\end{align}
where~$a_{\eps}(x,y)>0$ is a quantity that converges to an absolute constant~$a_0>0$ independent of~$x,y$ as~$\eps$ tends to $0$
and~$o_\delta(1)$ is a quantity converging to~$0$ as~$\delta$ tends to $0$. 
\end{proposition}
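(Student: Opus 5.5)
The plan is to mirror the proof of Proposition~\ref{prop:one arm}, with four balls instead of two. I would apply the M-formula to a four-ball test function and then identify the loop functional on $\calL_{X,\eps}$ with $\widetilde\pi_2^\delta+\widetilde\Delta^\delta$.

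Take $X=\{0,x,y,x+y\}$ and the mean-zero test function
\[
F=\tfrac{1}{2\eps^2}\big(\one_{B_\eps(0)}+\one_{B_\eps(x+y)}-\one_{B_\eps(x)}-\one_{B_\eps(y)}\big)\in\calF_\eps .
\]
The mutual distances between the four centres are $|x|,|y|,|x-y|,|x+y|$, all at least $\dst>2\eps$. So Theorem~\ref{thm:Monday's formula} applies and gives
\[
\phi_\delta[\mathcal A_F(\calL)]\to\exp\Big(\tfrac1{2\pi^2}\iint F(z)F(z')\log|z-z'|\d z\d z'\Big).
\]
I expand this double integral into diagonal terms and cross terms.
\begin{itemize}
\item Each of the four diagonal terms gives a factor $\eps^{1/8}$ times $e^{b_0}$-type constants. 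Together they produce the prefactor $\eps^{1/2}$.
\item The cross terms, with charge products $q_iq_j$, give $\exp(\tfrac14 q_iq_j\log|x_i-x_j|)$ up to factors $e^{-b_\eps}$ that tend to $1$ as $\eps\to0$. The pairs $(0,x+y)$ and $(x,y)$ have same-sign charges and contribute $|x+y|^{1/4}$ and $|x-y|^{1/4}$. The remaining pairs have opposite charges and contribute $|x|^{-1/4}$ twice and $|y|^{-1/4}$ twice. The expected normalisation, read off from the exponent $1/4$ in Proposition~\ref{prop:one arm}, is a factor $|\cdot|^{\mp1/4}$ per pair.
\end{itemize}
I need to double-check the sign convention: with $\log$ appearing with a plus sign, like charges should attract in this normalisation, so the correct combination must be the one matching the stated expression $\big(|y|^2/(|x|^2|x-y||x+y|)\big)^{1/4}$. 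If it does not match, I would swap which balls carry which signs, for instance putting opposite charges on $0,y$ so that the pairs at distance $|y|$ are like-charged. That is the likely correct choice: $q_0=q_{x}=+1$ and $q_y=q_{x+y}=-1$. Then the pairs at distance $|y|$ are opposite, and I would recompute the exponents accordingly. The constant $a_\eps(x,y)$ collects the $b_0,b_\eps$ terms and converges to $a_0$ as $\eps\to0$.

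Next, Proposition~\ref{prop:forget small loops} with $k=4$ replaces $\phi_\delta[\mathcal A_F(\calL)]$ by $\bc^\delta(\eps)^4\phi_\delta[\mathcal A_F(\calL_{X,\eps})]$. The error is $C\pi_1^\delta(\eps,\dst)^4(\eps/\dst)^c$, which is exactly the error term in the statement. Combining with the M-formula limit, the remaining task is the combinatorial identity
\[
\phi_\delta[\mathcal A_F(\calL_{X,\eps})]=\widetilde\pi_2^\delta(x,y,\eps)+\widetilde\Delta^\delta(x,y,\eps).
\]
For a loop in $\calL_{X,\eps}$, $\int_{\itr(\ell)}F$ equals $\tfrac\pi2$ times the sum of the charges it surrounds.
\begin{itemize}
\item Loops surrounding an odd number of balls give cosine $0$.
\item Loops surrounding $0$ or $4$ balls give $1$, since the total charge is $0$.
\item Loops surrounding two balls give $\cos(\pm\pi)=-1$ if the charges agree and $\cos 0=1$ if they are opposite.
\end{itemize}
With the charges chosen so that the pairs $\{0,y\}$ and $\{x,x+y\}$ are opposite, the loops surrounding a like-charged pair are exactly the loops in $\calL^2_1$. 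Hence $\mathcal A_F(\calL_{X,\eps})=(-1)^{|\calL^2_1|}\one_{\calL^{\rm odd}=\emptyset}$.

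Topologically, the two-ball loops are nested and non-crossing, so on $\{\calL^{\rm odd}=\emptyset\}$ one cannot have both a loop surrounding $\{0,y\}$ and a loop in $\calL^2_1$ simultaneously, which gives a dichotomy:
\begin{itemize}
\item either $\calL^2_1=\emptyset$;
\item or $\calL^2$ consists only of $\calL^2_1$-loops, all surrounding the same pair.
\end{itemize}
Then I would rewrite $\phi_\delta[(-1)^{|\calL^2_1|}\one_{\calL^{\rm odd}=\emptyset}]$ by splitting on whether $\calL^2_1$ is empty. On $\{\calL^2_1=\emptyset\}$, the indicator $\one_{\{\calL^2_1=\calL^{\rm odd}=\emptyset\}}$ can be adjusted using the weight $(-1)^{|\calL^2|}$ of $\widetilde\Delta^\delta$. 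This is where the exact definitions must be reconciled, possibly through a parity argument, since $|\calL^2|$ and $|\calL^2_1|$ relate via the dichotomy above.

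I expect this bookkeeping, getting the charge assignment and parity identity to land exactly on $\widetilde\pi_2+\widetilde\Delta$, to be the main obstacle; the analytic parts are direct transcriptions of the proof of Proposition~\ref{prop:one arm}.
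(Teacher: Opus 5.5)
There is a genuine gap, and it sits exactly at the step you flagged as the main obstacle: neither charge assignment you consider is the right one. Each pair of balls contributes $|x_i-x_j|^{q_iq_j/4}$ to the Gaussian limit, as you correctly infer from Proposition~\ref{prop:one arm}. To get $|y|^{1/2}|x|^{-1/2}|x-y|^{-1/4}|x+y|^{-1/4}$, the two pairs at distance $|y|$, namely $\{0,y\}$ and $\{x,x+y\}$, must be \emph{like}-charged, and all other pairs opposite. Up to the constant $a_\eps$:
\begin{itemize}
\item your first choice $q_0=q_{x+y}=1$, $q_x=q_y=-1$ gives $\eps^{1/2}(|x-y|\,|x+y|)^{1/4}(|x|\,|y|)^{-1/2}$;
\item the one you settle on, $q_0=q_x=1$, $q_y=q_{x+y}=-1$, gives $\eps^{1/2}\big(|x|^2/(|y|^2|x-y|\,|x+y|)\big)^{1/4}$, which is the target with $x$ and $y$ exchanged.
\end{itemize}
Your sentence ``opposite charges on $0,y$ so that the pairs at distance $|y|$ are like-charged'' is self-contradictory. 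The paper uses $F=\frac1{2\eps^2}\big(\one_{B_\eps(0)}+\one_{B_\eps(y)}-\one_{B_\eps(x)}-\one_{B_\eps(x+y)}\big)$.

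The loop identity fails for the same reason, and no parity argument can rescue it. With your final charges, only loops around $\{0,x\}$ or $\{y,x+y\}$ get weight $-1$. The loops of $\calL^2_1$ around $\{0,x+y\}$ or $\{x,y\}$ get $+1$. Hence $\mathcal A_F(\calL_{X,\eps})\neq(-1)^{|\calL^2_1|}\one_{\{\calL^{\rm odd}=\emptyset\}}$. Even that expression would be wrong: it puts the sign on $\{\calL^2_1\neq\emptyset\}$ and no sign on $\{\calL^2_1=\emptyset\}$, the reverse of $\widetilde\pi_2^\delta+\widetilde\Delta^\delta$. Its unsigned part $\phi_\delta[\calL^2_1=\calL^{\rm odd}=\emptyset]$ is of order $\pi_1^\delta(\eps,|y|)^4$ when $|y|\ll|x|$, much larger than the target.

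With the correct charges the identity is immediate. The value of $\cos(\int_{\itr(\ell)}F)$ is:
\begin{itemize}
\item $0$ on $\calL^{\rm odd}$;
\item $+1$ on $\calL^2_1$ and on loops surrounding $0$ or $4$ balls;
\item $-1$ on $\calL^2\setminus\calL^2_1$, that is, on loops around $\{0,y\}$ or $\{x,x+y\}$.
\end{itemize}
The only topological input is that $\{\calL^2_1\neq\emptyset\}$ and $\{\calL^2\setminus\calL^2_1\neq\emptyset\}$ are disjoint. Two such loops share a ball, so they would be nested, forcing the outer one to surround at least three balls. This gives
\[
\mathcal A_F(\calL_{X,\eps})=\one_{\{\calL^{\rm odd}=\emptyset\}}\Big(\one_{\{\calL^2_1\neq\emptyset\}}+(-1)^{|\calL^2|}\one_{\{\calL^2_1=\emptyset\}}\Big),
\]
whose expectation is exactly $\widetilde\pi_2^\delta+\widetilde\Delta^\delta$.

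Your stronger dichotomy, that all loops of $\calL^2_1$ surround the same pair, is false: loops around $\{0,x\}$ and around $\{y,x+y\}$ can coexist, which is what $\widetilde\pi_4^\delta$ counts. It is not needed, though. The analytic part of your plan matches the paper: the M-formula, Proposition~\ref{prop:forget small loops} with $k=4$, and the convergence of $a_\eps(x,y)$.
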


\begin{proof}
Consider the test function~$F = \frac{1}{2\eps^2}\one_{B_\eps(0)}+\frac{1}{2\eps^2}\one_{B_\eps(y)} - \frac{1}{2\eps^2}\one_{B_\eps(x)}-\frac{1}{2\eps^2}\one_{B_\eps(x+y)}$.
We claim that 
\begin{equation}\label{eq:A_F_delta}
\phi_\delta[\mathcal A_F(\calL_{\{0,x,y,x+y\},\eps})]=\widetilde\Delta^\delta(x,y,\eps)+\widetilde\pi_2^\delta(x,y,\eps).
\end{equation}
Indeed (see Figure~\ref{fig:four_ball}),
\begin{equation*}
\cos\left(\int_{\itr(\lo)}F(z)\d z\right)
=\begin{cases} 
\ 0&\text{ if }\ell\in \calL^{\rm odd},\\
\ 1&\text{ if }\ell\in \calL^2_1,\\
\ -1&\text{ if }\ell \in \calL^2\setminus\calL^2_1.
\end{cases}
\end{equation*}
As a consequence,~$\mathcal A_F(\calL^2_1)=1$ and~$\mathcal A_F(\calL^2\setminus \calL^2_1)=(-1)^{|\calL^2\setminus \calL^2_1|}$. 
Moreover, the topology of loops implies that~$\{\calL^2_1\ne\emptyset\}$ and~$\{\calL^2\setminus\calL^2_1 \ne \emptyset\}$ are disjoint. 
These observations and the definitions of~$\widetilde\Delta^\delta(x,y,\eps)$ and~$\widetilde\pi_2^\delta(x,y,\eps)$ lead to~\eqref{eq:A_F_delta}.

Computing the right-hand side in~\eqref{eq:Monday's formula} for the above function~$F$, we conclude that 
\begin{align}\label{e.GFF_comput2}
   \lim_{\delta \to 0} \phi_\delta[\mcl A_F(\mcl L)]
    = a_\eps(x,y)\tfrac{\eps^\frac{1}{2}|y|^\frac{1}{2}}{|x|^\frac{1}{2}|x-y|^\frac{1}{4}|x+y|^\frac{1}{4}},
\end{align}
where
$a_\eps(x,y) =e^{4b_0+2b_\eps(x)-2b_\eps(y)-b_\eps(x+y)-b_\eps(x-y)}$, with~$b_0$ and~$b_\eps(.)$ given by~\eqref{eq:uu}.
In particular~$a_\eps(x,y)  $ tends to $e^{4b_0} =: a_0 >0$ as $\eps$ tends to $0$. 

Finally, using  Proposition~\ref{prop:forget small loops} to relate~$\phi_\delta[\mathcal A_F(\calL)]$ to~$\phi_\delta[\mathcal A_F(\calL_{\{0,x,y,x+y\},\eps})]$
we obtain the desired conclusion. 
%
\end{proof}

To separate the contribution of~$\widetilde\pi_2^\delta(x,y,\eps)$ and~$\widetilde\Delta^\delta(x,y,\eps)$ to~\eqref{eq:pi_2+Delta}, we will bound their difference. 

\begin{proposition}\label{prop:2}
There exist constants~$C,c>0$ such that for every~$\eps \geq \delta >0$ and every~$x,y\in \mathbb R^2$ with 
$|x|/16 \geq |y| \geq 4\eps$,
\begin{align*}
 \Ll|\widetilde\pi_2^\delta(x,y,\eps)-\widetilde\Delta^\delta(x,y,\eps)\Rr|
 \le C\, \widetilde\pi_2^\delta(x,y,\eps)\Ll(\tfrac{|y|}{|x|}\Rr)^c+C\,\pi_1^\delta(\eps,|y|)^4\Ll(\tfrac{\eps}{|y|}\Rr)^c   + o_\delta(1).
\end{align*}
\end{proposition}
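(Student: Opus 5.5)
We compare $\widetilde\pi_2^\delta-\widetilde\Delta^\delta$ configuration by configuration. On $\{\calL^{\rm odd}=\emptyset\}$ split according to whether $\calL^2_1\neq\emptyset$ (contributing $1$ to $\widetilde\pi_2$ and $0$ to $\widetilde\Delta$) or $\calL^2_1=\emptyset$ (contributing $0$ to $\widetilde\pi_2$ and $(-1)^{|\calL^2|}$ to $\widetilde\Delta$). Thus
\[
\widetilde\pi_2^\delta-\widetilde\Delta^\delta=\phi_\delta\big[\one_{\{\calL^2_1\neq\emptyset,\calL^{\rm odd}=\emptyset\}}\big]-\phi_\delta\big[(-1)^{|\calL^2|}\one_{\{\calL^2_1=\calL^{\rm odd}=\emptyset\}}\big],
\]
and the goal is a sign-reversing (``flip'') pairing: configurations with $\calL^2_1=\calL^{\rm odd}=\emptyset$ and $|\calL^2|$ odd should correspond to those with $|\calL^2|$ even, while configurations with $\calL^2_1\ne\emptyset$ should be matched, up to small error, with the excess. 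The natural geometric picture: when $|y|\ll|x|$, loops of $\calL^2$ either pair $\{0,y\}$ with each other (surrounding the small pair, or the large pair $\{x,x+y\}$) or cross the mesoscopic scale. Loops surrounding $\{0,y\}$ (or $\{x,x+y\}$) are in $\calL^2\setminus\calL^2_1$, while loops of $\calL^2_1$ separate $0$ from $y$ and must travel to scale $|x|$, hence produce the two-arm event in $\Ann(|y|,|x|)$ around both clusters.

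Step 1: Reduce to a local statement. Introduce scales $|y|\ll \rho\ll |x|$ (geometric means, as in the proof of Proposition~\ref{prop:forget small loops}) and condition on the loops outside $\Lambda_\rho\cup\Lambda_\rho(x)$. On $\{\calL^{\rm odd}=\emptyset\}$, the parity of the number of loops of $\calL^2$ surrounding $\{0,y\}$ but not $\{x,x+y\}$ is determined by whether $0,y$ are joined to $\partial\Lambda_\rho$ by a primal or dual path \emph{and} the type of the first loop; precisely, each such nested loop switches primal/dual, so $(-1)^{|\calL^2|}$ equals the product of ``types'' at the two small clusters times a factor depending only on loops surrounding both groups (which are not in $\calL^2$, they surround 4 balls). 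Using self-duality (a global primal/dual swap inside $B_\rho$, realized through Lemma~\ref{lem:mixing_IIC}-type mixing with arbitrary boundary conditions), I would show that the conditional expectation of the type variable near $\{0,y\}$, given the outside and given $\calL^2_1=\emptyset$, equals $\mp$ the conditional probability of $\calL^2_1\cap\Lambda_\rho\neq\emptyset$ plus an error $C(|y|/\rho)^c$ times the relevant arm probability. This is where $\Delta$-like behaviour enters: the imbalance between primal and dual boundary effects at scale $\rho$ is exactly the two-arm cost.

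Step 2: Error control. Errors come from (a) loops crossing the annuli $\Ann(\eps,|y|)$ around each ball in an atypical way, bounded as in \eqref{eq:hh_22} by $\pi_1^\delta(\eps,|y|)^4(\eps/|y|)^c$ via quasi-multiplicativity and $\pi_2\le\pi_1(r/R)^c$; (b) mixing errors between scales $|y|$, $\rho$ and $|x|$, which are multiplicative relative to $\widetilde\pi_2^\delta$ by Lemma~\ref{l.tildePi2k_estimate} and Proposition~\ref{prop:mixing}, giving $C\widetilde\pi_2^\delta(|y|/|x|)^c$; (c) the non-exact self-duality on the lattice at $\delta>0$ (as in \eqref{e.delta^c}), giving $o_\delta(1)$. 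The term $|\calL^2_1|\ge2$ is bounded by $\widetilde\pi_4^\delta\asymp\pi_4^\delta(|y|,|x|)^2\pi_1^\delta(\eps,|y|)^4$, which is $\le \widetilde\pi_2^\delta(|y|/|x|)^c$ by RSW.

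The main obstacle is Step 1: making the sign-cancellation precise, i.e.\ showing that the signed sum $\phi_\delta[(-1)^{|\calL^2|}\one]$ coincides with the probability of a separating loop up to relative error. I would first try an explicit involution: on $\{\calL^2_1=\emptyset\}$, swap primal and dual inside the outermost loop surrounding $\{0,y\}$ only (using self-duality of the conditional measure with matched boundary), which changes the parity of $|\calL^2|$ by one unless that loop does not exist; the non-matched configurations are precisely those where a loop separates $0$ from $y$ to scale $|x|$, i.e.\ $\calL^2_1\ne\emptyset$.
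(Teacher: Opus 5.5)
Your opening rewrite and the parity remark in Step~1 are correct. The bound on $\{|\calL^2_1|\ge 2\}$ by $\widetilde\pi_4^\delta\le C\widetilde\pi_2^\delta(|y|/|x|)^c$ and the small-loop error in Step~2 also match the paper. The gap is Step~1 itself, and it cannot be closed with the tools you list.

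On $\{\calL^2_1=\calL^{\rm odd}=\emptyset\}$, away from loops touching two balls, $(-1)^{|\calL^2|}=(-1)^{s+s'}$, where $s,s'$ are the primal/dual types joining $\{0,y\}$ and $\{x,x+y\}$. Hence $\widetilde\Delta^\delta$ is essentially $2(\phi_\delta[L(1)\cap R(1)]-\phi_\delta[L(1)\cap R(0)])$, a correlation between the two types. By contrast, $\widetilde\pi_2^\delta\asymp\pi_2^\delta(|y|,|x|)^2\pi_1^\delta(\eps,|y|)^4$. The proposition therefore asserts that this correlation is as large as a squared two-arm probability. Together with Section~\ref{sec:Delta}, it is exactly what gives $\Delta\approx\pi_2$, which the paper stresses is special to $q=4$.

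Every ingredient you invoke works verbatim for critical Bernoulli percolation ($q=1$): self-duality, \eqref{eq:RSW}, Proposition~\ref{prop:mixing}, Lemma~\ref{lem:mixing_IIC}, quasi-multiplicativity, and lattice-shift errors. There $\calL$, $\widetilde\pi_2^\delta$ and $\widetilde\Delta^\delta$ all make sense. But boundary conditions have no influence, the two types decorrelate at the four-arm rate, and $\widetilde\Delta^\delta$ is polynomially smaller than $\widetilde\pi_2^\delta$. After dividing by $\pi_1^\delta(\eps,|y|)^4$ and letting $\delta\to0$, then $\eps\to0$, the inequality fails as $|y|/|x|\to0$. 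So no argument built only from these tools can work.

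The concrete mechanism also breaks down.
\begin{itemize}
\item Conditionally on the outermost loop $\ell$ around $\{0,y\}$ and its exterior, the interior of $\ell$ carries wired or free boundary conditions according to its inner side, and duality exchanges these. There is no self-dual conditional measure with matched boundary.
\item Heuristically, the swap deletes $\ell$ and keeps all other loops, so it multiplies the weight $\sqrt{q}^{\#\mathrm{loops}}$ by $1/\sqrt{q}=1/2$.
\item The swap is many-to-one, not an involution: the image's outermost loop is the former second one, and the position of $\ell$ is forgotten.
\item Your matching is incoherent. The map acts on $\{\calL^2_1=\emptyset\}$, so its unmatched configurations (no loop surrounding exactly $\{0,y\}$) still have $\calL^2_1=\emptyset$. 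Identifying their signed mass with $\phi_\delta[\calL^2_1\neq\emptyset,\calL^{\rm odd}=\emptyset]$ is the statement itself.
\end{itemize}

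The missing idea is a genuinely $q=4$ input: the M-formula applied to a non-neutral charge configuration. Take $\widetilde F=\tfrac{1}{2\eps^2}(\one_{B_\eps(0)}+\one_{B_\eps(y)}+\one_{B_\eps(x)}+\one_{B_\eps(x+y)})$, with all charges $+1$. The loop weights are $0$ on $\calL^{\rm odd}$, $-1$ on $\calL^2$ and $1$ otherwise. Hence
\[
\phi_\delta[\calA_{\widetilde F}(\calL_{\{0,x,y,x+y\},\eps})]=\widetilde\Delta^\delta-\widetilde\pi_2^\delta+2\phi_\delta[|\calL^2_1|\in 2\Z_{>0},\calL^{\rm odd}=\emptyset],
\]
and it remains to show the left side is small.

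Although $\widetilde F$ has non-zero mean, one has $\phi_\delta[\calA_{\widetilde F}(\calL)]^2=\lim_{R\to\infty}\phi_\delta[\calA_{\widetilde F_R}(\calL)]$ uniformly in $\delta$. Here $\widetilde F_R=\widetilde F(\cdot-(R,0))-\widetilde F(\cdot+(R,0))$ has mean zero, and the identity follows from mixing, the absence of loops joining the two far copies, and evenness of the cosine. Theorem~\ref{thm:Monday's formula} gives a Gaussian expression for $\lim_{\delta\to0}\phi_\delta[\calA_{\widetilde F_R}(\calL)]$ that tends to $0$ as $R\to\infty$. Hence $\phi_\delta[\calA_{\widetilde F}(\calL)]\to0$.

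Proposition~\ref{prop:forget small loops} and Lemma~\ref{l.kappa} then transfer this to $\calL_{\{0,x,y,x+y\},\eps}$, with error $C\pi_1^\delta(\eps,|y|)^4(\eps/|y|)^c+o_\delta(1)$. This charge-neutrality cancellation of the Gaussian Free Field is what replaces your hoped-for involution.
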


\begin{proof}
Consider the test function
$
    \widetilde F = \tfrac{1}{2\eps^2}\one_{B_\eps(0)}+\tfrac{1}{2\eps^2}\one_{B_\eps(y)} + \tfrac{1}{2\eps^2}\one_{B_\eps(x)}+\tfrac{1}{2\eps^2}\one_{B_\eps(x+y)}.
$ 
Then
\begin{align}
\cos\left(\int_{\itr(\lo)}\widetilde F(z)\d z\right)=\begin{cases} 
\ 0&\text{ if }\ell\in \calL^{\rm odd},\\
\ -1&\text{ if }\ell\in \calL^2,\\
\ 1&\text{ if }\ell\in \calL_{\{0,x,y,x+y\},\eps} \setminus (\calL^{\rm odd} \cup \calL^{2}),
\end{cases}
\end{align}
Using again that~$\{\calL^2_1\ne\emptyset\}$ and~$\{\calL^2\setminus\calL^2_1 \ne \emptyset\}$ are disjoint, we conclude that 
\begin{align*}
	\phi_{\delta }[\mathcal A_{\widetilde F}(\calL_{\{0,x,y,x+y\},\eps} )]=\widetilde\Delta^\delta(x,y,\eps)-\widetilde\pi_2^\delta(x,y,\eps)+2\phi_\delta\Ll[|\calL^2_1|\in 2\mathbb Z_{>0},\, \calL^{\rm odd}=\emptyset\Rr].
\end{align*}
Observe that 
\begin{equation*}
\phi_\delta\big[|\calL^2_1|\in 2\mathbb Z_{>0},\,\calL^{\rm odd}=\emptyset\big]
\le \widetilde\pi_4^\delta(x,y,\eps)\le C\,\widetilde\pi_2^\delta(x,y,\eps)\lp\tfrac{|y|}{|x|}\rp^c,
\end{equation*}
for constants~$c,C > 0$. Thus 
\begin{align}\label{eq:2deltapi}
	\big|\widetilde\Delta^\delta(x,y,\eps)-\widetilde\pi_2^\delta(x,y,\eps)\big| \leq \big|\phi_{\delta }[\mathcal A_{\widetilde F}(\calL_{\{0,x,y,x+y\},\eps} )] \big| + C \widetilde\pi_2^\delta(x,y,\eps) \lp\tfrac{|y|}{|x|}\rp^c. 
\end{align}

Note that Theorem~\ref{thm:Monday's formula} does not apply to~$\widetilde F$ as its average is not zero. 
However, if we define~$\widetilde F_R:=\widetilde F(\cdot-(R,0))-\widetilde F(\cdot+(R,0))$, we find 
\begin{align}
	\phi_{\delta }[\mathcal A_{\widetilde F}(\calL )]^2
	=\lim_{R\to \infty}\phi_\delta[\mathcal A_{\widetilde F(\cdot-(R,0))}(\calL )\mathcal A_{-\widetilde F(\cdot+(R,0))}(\calL )]
	=\lim_{R\to \infty}\phi_\delta[\mathcal A_{\widetilde F_R}(\calL)].
\end{align}
To justify this claim, we use the mixing property~\eqref{eq:mixing}, the fact that the probability that a loop intersects both~$\Lambda_{\sqrt R}(-R,0)$ and~$\Lambda_{\sqrt R}(R,0)$ tends to~$0$ as~$R$ tends to $\infty$,
and that $\calA_{F}(L)=\calA_{-F}(L)$ for any function $F$ as cosine is an even function.
Furthermore, the convergences above are uniform in~$\delta$, since both~\eqref{eq:mixing} and the bound on the existence of large loops are uniform in~$\delta$. 

Using the uniformity of the convergence above, applying Theorem~\ref{thm:Monday's formula} to~$\widetilde F_R$, then taking~$R$ to infinity, we conclude that 
\begin{align}
	\lim_{\delta\to0}\phi_{\delta }[\mathcal A_{\widetilde F}(\calL )]^2
	=\lim_{\delta\to0}\lim_{R\to \infty} \phi_\delta[\mathcal A_{\widetilde F_R}(\calL)] 
	=\lim_{R\to \infty}\lim_{\delta\to0} \phi_\delta[\mathcal A_{\widetilde F_R}(\calL)] 
	= 0.
\end{align}
Proposition~\ref{prop:forget small loops} and Lemma~\ref{l.kappa} then imply that
\begin{align*}
\Ll|\phi_\delta[\mathcal A_{\widetilde F}(\calL_{\{0,x,y,x+y\},\eps})]\Rr|\le 
C\pi_1^\delta(\eps,|y|)^4\left(\tfrac{\eps}{|y|}\right)^c + o_\delta(1).
\end{align*}
Combine the above with~\eqref{eq:2deltapi} to obtain the desired conclusion. 
\end{proof}

We record an immediate consequence of Propositions~\ref{prop:1} and~\ref{prop:2}.

\begin{corollary}\label{c.pi_2,Delta}
There exist constants~$C,c>0$ such that for every~$\eps \geq \delta >0$ and every~$x,y\in \mathbb R^2$ with 
$|x|/16 \geq |y| \geq 4\eps$,
\begin{align}\label{eq:c1}\quad
\Ll| \widetilde\pi_2^\delta(x,y,\eps) -\tfrac{a_\eps(x,y)}{2\bc^\delta(\eps)^4}\eps^{1/2}\left(\tfrac{ |y|^2}{|x|^2|x-y|\,|x+y|}\right)^{1/4}\Rr|
	&\le   C\widetilde\pi_2^\delta(x,y,\eps)\Ll(\tfrac{|y|}{|x|}\Rr)^c+C\pi_1^\delta(\eps,|y|)^4\Ll(\tfrac{\eps}{|y|}\Rr)^c + o_\delta(1),\\
\label{eq:c2}\Ll| \widetilde\Delta^\delta(x,y,\eps)-\tfrac{a_\eps(x,y)}{2\bc^\delta(\eps)^4}\eps^{1/2}\left(\tfrac{ |y|^2}{|x|^2|x-y|\,|x+y|}\right)^{1/4}\Rr|
&\le   C\widetilde\pi_2^\delta(x,y,\eps)\Ll(\tfrac{|y|}{|x|}\Rr)^c+C\pi_1^\delta(\eps,|y|)^4\Ll(\tfrac{\eps}{|y|}\Rr)^c  + o_\delta(1) .
\end{align}
\end{corollary}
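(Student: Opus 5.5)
The corollary follows by combining the sum and the difference of the two quantities. Write $S:=\widetilde\pi_2^\delta(x,y,\eps)+\widetilde\Delta^\delta(x,y,\eps)$, $D:=\widetilde\pi_2^\delta(x,y,\eps)-\widetilde\Delta^\delta(x,y,\eps)$, and let $M:=a_\eps(x,y)\eps^{1/2}\big(\tfrac{|y|^2}{|x|^2|x-y|\,|x+y|}\big)^{1/4}$ denote the GFF main term. Then $\widetilde\pi_2^\delta=\tfrac12(S+D)$ and $\widetilde\Delta^\delta=\tfrac12(S-D)$. Hence
\[
\big|\widetilde\pi_2^\delta-\tfrac{M}{2\bc^\delta(\eps)^4}\big|\le \tfrac12\big|S-\tfrac{M}{\bc^\delta(\eps)^4}\big|+\tfrac12|D|,
\]
and the same bound holds for $\widetilde\Delta^\delta$. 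It therefore suffices to bound $|S-M/\bc^\delta(\eps)^4|$ and $|D|$ by the right-hand side of \eqref{eq:c1}.

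The term $|D|$ is exactly what Proposition~\ref{prop:2} controls, under the same hypotheses $|x|/16\ge|y|\ge4\eps$. For $S$, I apply Proposition~\ref{prop:1} and divide by $\bc^\delta(\eps)^4$. Lemma~\ref{l.kappa} gives $\bc^\delta(\eps)\ge c$ once $\delta<c\eps$; the finitely many remaining $\delta$ are irrelevant because the statement is asymptotic through the $o_\delta(1)$ term, or they can be absorbed by enlarging $C$. Dividing therefore only changes the constants. This yields the error $C\pi_1^\delta(\eps,\dst)^4(\eps/\dst)^c+o_\delta(1)$ with $\dst=\min\{|x|,|y|,|x-y|,|x+y|\}$.

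The one point that needs care is matching the geometry. Under $|x|\ge16|y|$, we have $|x\pm y|\ge\tfrac{15}{16}|x|\ge|y|$, so $\dst=|y|$. Proposition~\ref{prop:1} requires $\eps<\dst/2$, which holds since $|y|\ge4\eps$. It also states constants independent of $x,y$, so the error term becomes $C\pi_1^\delta(\eps,|y|)^4(\eps/|y|)^c$, which matches the corollary. If Proposition~\ref{prop:1} were only valid for $\dst$ in a fixed compact range, I would instead use scale invariance to normalise $|y|$. I expect no further obstacle.

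Summing the two bounds gives \eqref{eq:c1}. Replacing $+\tfrac12 D$ by $-\tfrac12 D$ gives \eqref{eq:c2} with the same right-hand side, which still contains $\widetilde\pi_2^\delta$ (not $\widetilde\Delta^\delta$) in the first error term, as stated.
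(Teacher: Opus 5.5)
Your proposal is correct and is the argument the paper has in mind. The paper records the corollary as an immediate consequence of Propositions~\ref{prop:1} and~\ref{prop:2}, and your route is the same: write $\widetilde\pi_2^\delta$ and $\widetilde\Delta^\delta$ as half-sum and half-difference, check that $\dst=|y|$ and $\eps<\dst/2$ under $|x|/16\ge|y|\ge4\eps$, and divide by $\bc^\delta(\eps)^4$ using Lemma~\ref{l.kappa}.

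One minor correction: the range $c\eps\le\delta\le\eps$ not covered by Lemma~\ref{l.kappa} is a continuum, not finitely many values. It is handled by absorbing it into the $o_\delta(1)$ term, which may depend on $\eps,x,y$; enlarging $C$ would not obviously work there.
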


Now, we are ready to prove the asymptotic of Theorem~\ref{thm:main} for the two-arm probability.

\begin{proof}[Proof of~\eqref{eq:a2}] 
Fix~$x\in\R^2$ with~$|x|=1$ and consider~$y \in \bbR^2$ and~$\eps > \delta > 0$ so that Corollary~\ref{c.pi_2,Delta} applies. 
By Lemma~\ref{l.tildePi2k_estimate} 
\begin{align}
	\widetilde\pi_2^\delta(x,y,\eps)
	\asymp \pi_2^\delta(|y|,1)^2\pi_1^\delta(\eps,|y|)^4
\end{align}
Inserting this into~\eqref{eq:c1}, dividing by~$\pi^\delta_1(\eps,|y|)^4$, taking~$\delta$ to $0$ then~$\eps$ to $0$ and using~\eqref{eq:b2}, we conclude that 
\begin{align}
	c'|y|^{1/2} \leq \liminf_{\delta\to0}\pi_2^\delta(|y|,1) \leq \limsup_{\delta\to0}\pi_2^\delta(|y|,1) \leq C'|y|^{1/2}.
\end{align}
for constants~$c',C'$ independent of~$y$. Apply Lemma~\ref{l.exp_quasi_multi} to deduce~\eqref{eq:a2}. 
\end{proof}

\section{The influence exponent}\label{sec:Delta}

The goal of this section is to prove the asymptotic~\eqref{eq:a3} of Theorem~\ref{thm:main} for   $\Delta$. 

\subsection{Preliminaries}


Recall the quantity~$\Delta(r,R)$ introduced in~\eqref{eq:Delta_def},
its rescaled version~$\Delta^\delta$, 
and the quantity~$\widetilde\Delta^\delta$ introduced in~\eqref{e.Delta(x,y,eps,delta)=}, whose asymptotic was determined in \eqref{eq:c2}.
The following proposition relates~$\Delta^\delta$ to~$\widetilde\Delta^\delta$ and is the core of this section.

%

\begin{proposition}\label{prop:from Delta to influence}
	For every~$x,y \in \bbR^2$ and~$\eps \geq \delta > 0$ satisfying~$|x|/16 \geq |y| \geq 4\delta$,
    \begin{align}
    \frac{\widetilde\Delta^\delta(x,y,\eps)}{\pi_1^\delta(\eps,|y|)^4}\asymp \Delta^\delta(|y|,|x|)^2+o_{\eps,\delta}(1)\label{e.prop:two-arm_Delta}
    \end{align}
    where~$o_{\eps,\delta}(1)$ is a quantity that satisfies~$\lim_{\eps\to0}\limsup_{\delta\to0} \Ll|o_{\eps,\delta}(1)\Rr|=0$. 
	The constants in~$\asymp$ are uniform in~$x$,~$y$,~$\delta$ and~$\eps$. 
\end{proposition}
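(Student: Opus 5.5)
The plan is to decompose $\widetilde\Delta^\delta(x,y,\eps)$ into two nearly independent pieces, one around $\{0,y\}$ and one around $\{x,x+y\}$, and to recognise each piece as a signed quantity comparable to $\Delta^\delta(|y|,|x|)$ times two one-arm factors $\pi_1^\delta(\eps,|y|)^2$. Write $r=|y|$, $R=|x|$. On the event $\{\calL^2_1=\calL^{\rm odd}=\emptyset\}$, every loop surrounding some of the four balls surrounds either exactly the pair $\{B_\eps(0),B_\eps(y)\}$, exactly the pair $\{B_\eps(x),B_\eps(x+y)\}$, or all four. (The cross pairings $\{0,x+y\}$, $\{y,x\}$ are excluded by topology once the straight pairs are present, and otherwise are of negligible probability relative to $\pi_2$-type events, controlled as in Lemma~\ref{l.tildePi2k_estimate}.) Loops around all four balls do not contribute to $(-1)^{|\calL^2|}$, and neither do loops around a single ball. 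Hence $(-1)^{|\calL^2|}=(-1)^{N_0}(-1)^{N_x}$, where $N_0$ (resp.\ $N_x$) is the number of loops surrounding exactly $B_\eps(0)\cup B_\eps(y)$ (resp.\ $B_\eps(x)\cup B_\eps(x+y)$). The parity of $N_0$ encodes whether the two balls $B_\eps(0),B_\eps(y)$ are linked to the outermost relevant loop by primal or dual paths of the same type. More concretely, it determines whether the configuration on the scale $\Lambda_{R/4}$ "sees" primal or dual connection from the pair to distance $R$. Thus $(-1)^{N_0}$ is the analogue of $\1_{\text{primal}}-\1_{\text{dual}}$ for the cluster structure around the pair, exactly the signed quantity measured by $\Delta$.

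The key steps are as follows. First, condition on the loops outside $\Lambda_{R/4}\cup\Lambda_{R/4}(x)$. By the mixing property~\eqref{eq:mixing} and a spatial Markov argument, the conditional expectation of $(-1)^{N_0}\1\{\text{no odd/}\calL^2_1\text{ loops near }0,y\}$ depends on the outside only through the boundary condition induced on $\Lambda_{R/4}$. By self-duality, that boundary condition is effectively either "primal-type" (wired-like) or "dual-type" (free-like) according to the type of the loop separating the two groups. Second, show that this local expectation equals, up to sign and constants, $\pi_1^\delta(\eps,r)^2\big(\phi^1_{\Lambda_{R}}[\,\cdot\,]-\phi^0_{\Lambda_R}[\,\cdot\,]\big)$ of an event of the form ``$B_\eps(0)$ and $B_\eps(y)$ connected to each other and to the boundary'' in the primal versus dual sense. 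Here the one-arm factors come from the arms from $B_\eps(0)$, $B_\eps(y)$ to scale $r/4$, exactly as in Lemma~\ref{l.tildePi2k_estimate}, and they are factorised using Lemma~\ref{lem:mixing_IIC} together with quasi-multiplicativity. Third, identify the difference of boundary conditions with $\Delta^\delta(r,R)$. Once the $\eps$-scale arms are peeled off (which is where the error $o_{\eps,\delta}(1)$ enters, via Lemma~\ref{lem:mixing_IIC} with ratio $\eps/r$), the remaining quantity is a difference between wired and free measures of a crossing-type event at scale $r$ in $\Lambda_R$. This difference is $\asymp\Delta^\delta(r,R)$ by the monotone-coupling characterisation of $\Delta$ and its quasi-multiplicativity (Proposition~\ref{p.pi_quasi-multi}, from~\cite{duminil2022planar}), which shows that $\Delta(r,R)$ is comparable to the influence of boundary conditions on any reasonable increasing event at scale $r$. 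Fourth, do the same around $x$, and use that the sign is the same on both sides: the sign is determined by the common separating loop, which makes the product of the two signed factors positive. Multiplying the two factors gives $\pi_1^\delta(\eps,r)^4\Delta^\delta(r,R)^2$.

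The main obstacle is the third step, namely the two-sided comparison with $\Delta$ in the presence of signs. The upper bound requires showing that the signed local expectations cannot exceed $C\Delta^\delta(r,R)$ even though individual terms are of order $\pi_2$. The lower bound requires that they do not cancel below $c\Delta^\delta(r,R)$. I would attempt both through the monotone coupling of $\phi^1_{\Lambda_R}$ and $\phi^0_{\Lambda_R}$: the difference is supported on configurations where disagreement percolates from $\partial\Lambda_R$ to $\Lambda_r$, and on that event the parity of $N_0$ flips with probability bounded below by RSW. This would give matching bounds up to constants, with the small-scale arm decoupling producing only the $o_{\eps,\delta}(1)$ correction.
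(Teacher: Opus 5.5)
Your sign bookkeeping is correct. On $\{\calL^2_1=\calL^{\rm odd}=\emptyset\}$ one has $(-1)^{|\calL^2|}=(-1)^{N_0+N_x}$. Two corrections to the surrounding remarks: cross pairings are excluded by the definition of $\calL^2_1$, not by topology, and the error term you must discard is the event that one loop touches two of the balls, which costs $\pi_2^\delta(\eps,|y|)^2\pi_1^\delta(\eps,|y|)^2$. With that done, the sign is $+1$ exactly when both pairs are joined by paths of the same type. So $\widetilde\Delta^\delta$ is, up to $o(\pi_1^4)$, the covariance $\rcP_\delta[X_LX_R]$ of $X_L=\1_{L(1)}-\1_{L(0)}$ and $X_R=\1_{R(1)}-\1_{R(0)}$, each of mean $o_\delta(1)$.

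\textbf{The gap is in how you estimate this covariance.}
\begin{itemize}
\item Conditioning on the configuration outside $\Lambda_{R/4}\cup\Lambda_{R/4}(x)$ does not give two conditionally independent factors with a common sign. The induced boundary condition is an arbitrary partition and may wire $\partial\Lambda_{R/4}$ to $\partial\Lambda_{R/4}(x)$. Self-duality does not reduce it to ``wired-like'' or ``free-like''.
\item For a general boundary condition, monotonicity only places the conditional expectation of $X_L$ between its free and wired values, which are roughly opposite. Nothing stops the two local conditional expectations from being small, or of opposite signs, on most outside configurations. The $\Delta^2$ comes from cancellations across outside configurations, not from a pointwise product $\sigma^2 a_0a_x$.
\item $N_0$ and the ``common type'' are not measurable inside $\Lambda_{R/4}$, since loops around $\{0,y\}$ can leave it.
\item Quasi-multiplicativity does not show that $\Delta(r,R)$ lower-bounds the influence of boundary conditions on an arbitrary increasing event. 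Only the upper bound (Lemma~\ref{lem:iota0}) is available off the shelf.
\end{itemize}

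\textbf{The missing idea is a mechanism for the lower bound.} The paper first uses FKG: $L(1)$ is increasing, $R(0)$ is decreasing, and $\rcP[R(0)]=\rcP[R(1)]+o_\delta(1)$. This reduces the problem to bounding $\rcP[L(1)]\big(\rcP[R(1)\,|\,L(1)]-\rcP[R(1)]\big)$ from below. The bracket is then handled by an increasing coupling of $\rcP$ and $\rcP[\cdot\,|\,L(1)]$ in three steps:
\begin{itemize}
\item A double four-petal flower domain around $\{0,y\}$ turns the conditioning into a boost of boundary conditions on a well-separated outer flower domain, with probability $\ges 1$ (Lemma~\ref{lem:iota1}).
\item The boost is carried to a well-separated inner flower domain around $x$ with probability $\ges\Delta^\delta(|y|,|x|)^2$ (Lemma~\ref{lem:iota2}). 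One factor pays for leaving $0$ and one for arriving at $x$.
\item The boost is converted into $R(1)$ at cost $\pi_1^\delta(\eps,|y|)^2$ (Lemma~\ref{lem:iota3}).
\end{itemize}
Your step ``disagreement percolates and the parity flips with RSW probability'' does not supply this. A disagreement in the monotone coupling does not by itself connect the pair in one configuration but not the other; that requires the four-petal and boosting-pair structure.

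\textbf{The upper bound also needs a different tool.} Instead of conditioning on the exterior of two boxes, the paper applies the two-point mixing bound of Lemma~\ref{lem:iota0} to a multiscale partition $L=\bigsqcup_k L_k$, $R=\bigsqcup_\ell R_\ell$ of the non-local events. The terms where the connections reach across are controlled by $\pi_4\le c\,\Delta\,(\tfrac rR)^c$.
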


From this proposition, it is fairly direct to obtain~\eqref{eq:a3}.

\begin{proof}[Proof of~\eqref{eq:a3}]
Fix~$x\in\R^2$ with~$|x|=1$ and consider~$y \in \bbR^2$ and~$\eps > \delta > 0$ so that Corollary~\ref{c.pi_2,Delta} applies. 
Dividing~\eqref{eq:c2} by~$\pi_1^\delta(\eps,|y|)^4$ and using Lemma~\ref{l.tildePi2k_estimate} we find 
\begin{align}
	\Ll| \frac{\widetilde\Delta^\delta(x,y,\eps)}{\pi_1^\delta(\eps,|y|)^4} - |y|  \frac{(\eps/y)^{1/2}}{\pi_1^\delta(\eps,|y|)^4}\Big(\frac{a_\eps(x,y)}{2\bc^\delta(\eps)^4}+o_y(1)\Big)\Rr|
	\le   C \pi_2^\delta(|y|,1)^{2} |y|^c+C \Ll(\tfrac{\eps}{|y|}\Rr)^c  + o_\delta(1),
\end{align}
where~$o_y(1)$ is a quantity that does not depend on~$\eps$ or~$\delta$ and converges to~$0$ as~$y \to 0$. 
Using Proposition~\ref{prop:from Delta to influence},~\eqref{eq:a1},~\eqref{eq:a2} and taking~$\delta$ and then $\eps$ to $0$, we find that 
\begin{align}
    c'|y|^{1/2} \leq \liminf_{\delta\to0}\Delta^\delta(|y|,1) \leq \limsup_{\delta\to0}\Delta^\delta(|y|,1)  \leq C'|y|^{1/2},
\end{align}
for constants~$c',C'$ independent of~$y$ and all~$y$  with~$|y|$ small enough. Apply Lemma~\ref{l.exp_quasi_multi} to deduce~\eqref{eq:a3}. 
%
%
%
\end{proof}

The rest of the section is devoted to the proof of Proposition~\ref{prop:from Delta to influence}, 
which is, in fact, more complicated than the equivalent results for the one- and two-arm probabilities. 
This is due to~$\Delta$ being the difference of two probabilities, rather than the probability of a given event. 

At the heart of the proof lies the lemma below, which requires a few definitions before being stated. 
For~$s\in\{0,1\}$, consider the events
\begin{align}\label{e.L(s)=,R(s)=}
    L(s) := \Ll\{B_\eps(0) \stackrel{s}{\longleftrightarrow}B_\eps(y)\Rr\},\qquad R(s) := \Ll\{B_\eps(x) \stackrel{s}{\longleftrightarrow}B_\eps(x+y)\Rr\},
\end{align}
which correspond to pairs of~$\eps$-balls being connected in the primal configuration if~$s = 1$ or dual if~$s=0$. 

\begin{lemma}\label{l.P(LR)-P(LR)=Delta^2}
	For any~$\eps \geq \delta > 0$ and~$x,y \in \bbR^2$ with~$|x|/16 \geq |y| \geq 4\eps$,
	\begin{align}\label{e.l.P(LR)-P(LR)=Delta^2}
		\rcP_\delta[L(1)\cap R(1)] - \rcP_\delta[L(1)\cap R(0)]\asymp \Delta^\delta(|y|,|x|)^2\big( \pi^\delta_1(\eps,|y|)^4 + o_{\eps,\delta}(1)\big).
	\end{align}
	where~$o_{\eps,\delta}(1)$ satisfies~$\lim_{\eps\to0}\limsup_{\delta\to0} \Ll|o_{\delta}(1)\Rr|=0$.
\end{lemma}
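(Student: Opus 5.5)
The plan is to decompose the difference according to the configuration in an annulus separating the two pairs of balls, and to recognise in each half a boundary-condition influence of the type measured by $\Delta$. Set $r=|y|$, $R=|x|$. The left pair $B_\eps(0),B_\eps(y)$ lies in $\Lambda_{2r}$. The right pair lies in $\Lambda_{2r}(x)$. Both are far from $\Ann(4r,R/4)$ and from its translate around $x$. We condition on $L(1)$ and write
\[
\rcP_\delta[L(1)\cap R(1)]-\rcP_\delta[L(1)\cap R(0)]=\rcP_\delta[L(1)]\Big(\rcP_\delta[R(1)\,|\,L(1)]-\rcP_\delta[R(0)\,|\,L(1)]\Big).
\]
By \eqref{eq:RSW} and separation of arms, $\rcP_\delta[L(1)]\asymp \pi^\delta_1(\eps,r)^2$ up to an error vanishing as $\eps\to0$ after $\delta\to0$. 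This is the same estimate as $\phi_\delta[B_\eps(0)\lra B_\eps(x)]\asymp\pi_1^\delta(\eps,|x|)^2$ used in Corollary~\ref{c.two_point}.

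The key observation is that $\rcP_\delta[R(1)]-\rcP_\delta[R(0)]$ is small, by self-duality: the error is $O(\delta^c)$, as in \eqref{e.delta^c}. So the bracket is essentially
\[
\big(\rcP_\delta[R(1)|L(1)]-\rcP_\delta[R(1)]\big)-\big(\rcP_\delta[R(0)|L(1)]-\rcP_\delta[R(0)]\big),
\]
which measures how $L(1)$ biases the right pair. We couple through an intermediate scale $\rho\approx\sqrt{rR}$. Conditioning on the configuration outside $\Lambda_\rho$ and using the domain Markov property, the effect of $L(1)$ is transmitted through the boundary conditions it induces. Comparing wired and free boundary conditions on $\Lambda_\rho$ and monotone couplings, $L(1)$ raises the probability of $\{B_\eps(x)\lra B_\eps(x+y)\}$ by an amount of order $\Delta^\delta(r,\rho)$ times $\rcP[R(1)]$ on the left. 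Symmetrically, the right side feels the outer boundary change with strength $\Delta^\delta(r,\rho)$. Taken together, the bias is $\asymp \Delta^\delta(r,\rho)^2\Delta^\delta(\rho,R)$-type products. Quasi-multiplicativity \eqref{eq:quasi_delta} then reassembles this into $\Delta^\delta(r,R)^2$. Concretely, we write the bias as a product of the left influence and the right influence, each realised as a difference of wired and free crossing probabilities of an $r$-box inside a box of size $R$. Each such factor is comparable to $\Delta^\delta(r,R)$, by \cite[Theorem~1.6]{duminil2022planar} and the RSW-based comparison of crossings of $\calH(r,r)$ with the events $\{B_\eps(0)\lra B_\eps(y)\}$. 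The latter comparison yields the extra factor $\pi^\delta_1(\eps,r)^2$ per side.

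For the lower bound we use monotonicity: $L(1)$ is increasing and $R(0)$ is decreasing, so FKG gives positivity of the bracket. To get the quantitative bound, we build explicit configurations. With probability $\gtrsim\pi_1^\delta(\eps,r)^4$, the four balls are connected to distance $r$ and the arms are well separated. On this event, the difference reduces to a wired-versus-free comparison at scale $r$ inside a macroscopic region of size $R$, applied twice, once for each pair.

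The main obstacle is the precise identification of the bias with $\Delta^2$ rather than $\Delta$. The influence of the left pair on the right pair is a two-sided effect: the left configuration changes the effective boundary condition far away, and that change must in turn be detected by the right pair. Proving that both directions contribute a factor $\Delta^\delta(r,R)$, uniformly, requires the full $\Delta$-machinery of \cite{duminil2022planar}. In particular we need quasi-multiplicativity and the comparison of $\Delta$ for different local increasing events, e.g.\ crossing versus ball-connection. The $o_{\eps,\delta}(1)$ term absorbs the $\delta^c$ self-duality error and the small-scale mismatch between ball connections and box crossings.
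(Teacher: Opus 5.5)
Your opening reduction matches the paper's: factor out $\rcP_\delta[L(1)]\asymp\pi_1^\delta(\eps,|y|)^2$, trade $\rcP_\delta[R(0)]$ for $\rcP_\delta[R(1)]$ at cost $O(\delta^c)$ by self-duality, and use FKG for the signs of the two covariances. After that, there is a genuine gap.

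\textbf{Upper bound.} The one concrete computation you give is wrong. For an intermediate scale $\rho$ around the origin, quasi-multiplicativity gives $\Delta(r,\rho)^2\Delta(\rho,R)\asymp\Delta(r,R)\,\Delta(r,\rho)$. This exceeds $\Delta(r,R)^2$ by the factor $\Delta(\rho,R)^{-1}\to\infty$, so no choice of $\rho$ reassembles it into $\Delta(r,R)^2$. The two factors $\Delta(|y|,|x|)$ come instead from each pair seeing the other at distance $|x|$. That is the two-point mixing bound \eqref{eq:Delta_as_mix12} for events localised in $\Lambda_{r_1}$ and $\Lambda_{r_2}(x)$. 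It does not follow from single-box influence estimates plus quasi-multiplicativity; the paper takes it from the technique of \cite{duminil2022planar}. Even granting that bound, $L(1)$ and $R(s)$ are not local events. The connecting path may reach distance $|x|$ or pass near $x$, so conditioning on $L(1)$ does not become a boundary condition on $\partial\Lambda_\rho$ via the domain Markov property. The paper handles this as follows:
\begin{itemize}
\item it partitions $L=\bigsqcup_k L_k$ and $R=\bigsqcup_\ell R_\ell$ according to the scale at which the connection is realised;
\item it applies \eqref{eq:Delta_as_mix12} to the pairs with $k+\ell<2K$;
\item it bounds the remaining pairs by four-arm events;
\item it uses $\pi_4(r,R)\le C\Delta(r,R)(r/R)^c$ to make the double sum converge.
\end{itemize}
Your proposal has no substitute for any of this. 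Writing ``the bias as a product of the left influence and the right influence'' assumes exactly what has to be proved.

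\textbf{Lower bound.} The claim that on a good event the difference ``reduces to a wired-versus-free comparison at scale $r$ inside a region of size $R$, applied twice'' fails as stated. Conditioning on $L(1)$ never imposes wired versus free boundary conditions. At best, after exploring a double four-petal flower domain and the event $H$, it wires two primal petals of a well-separated outer flower domain on $\Lambda_{4|y|}$, which is a boost. Moreover, the conditional law of $\omega'$ there is not a random-cluster measure. It only stochastically dominates $\rcP^{\xi'}_{\Fout}$, via the FKG step with $\calQ^{\rm out}_1\lra\calQ^{\rm out}_3$ in Lemma~\ref{lem:iota1}. Influences also do not compose multiplicatively for free. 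You need an increasing coupling of $\rcP^{\xi}_{\calG}$ and $\rcP^{\xi'}_{\calG}$ that transports this boost to a boost on a well-separated inner flower domain on $\Lambda_{2|y|}(x)$ with probability $\ges\Delta^\delta(|y|,|x|)^2$. This is Lemma~\ref{lem:iota2}, from \cite{duminil2022planar}. You then need the local estimate that a boost raises $R(1)$ by $\ges\pi_1^\delta(\eps,|y|)^2$, which is Lemma~\ref{lem:iota3}.

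Your last paragraph correctly names the identification of the bias with $\Delta^2$ as the main obstacle, but then defers it to ``the full $\Delta$-machinery''. That identification is the whole content of the lemma. As written, the proposal is a heuristic for the exponent bookkeeping, not a proof of either inequality.
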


The remainder of this section is organized as follows. 
In Section~\ref{sec:from lemma to proposition} we explain how Lemma~\ref{l.P(LR)-P(LR)=Delta^2} implies Proposition~\ref{prop:from Delta to influence}. 
The following sections are dedicated to proving Lemma~\ref{l.P(LR)-P(LR)=Delta^2}: the upper bound is presented in Section~\ref{sec:upper} and the lower bound in Section~\ref{sec:lower}.
The latter bound is more intricate and will require some coupling arguments which were developed in \cite{duminil2022planar} and will be recalled in Section~\ref{sec:basics}. 

Henceforth, fix~$\eps$,~$\delta$,~$x$ and~$y$ as in the statement of Proposition~\ref{prop:from Delta to influence}. All constants below are independent of these choices. 
We will remove the index~$\delta$ when unnecessary, in particular we write~$\rcP = \rcP_\delta$ and~$\Lambda_r(.) = \Lambda_r^\delta(.)$.


\subsection{From Lemma~\ref{l.P(LR)-P(LR)=Delta^2} to  Proposition~\ref{prop:from Delta to influence}}\label{sec:from lemma to proposition}

Below, we assume Lemma~\ref{l.P(LR)-P(LR)=Delta^2} and derive Proposition~\ref{prop:from Delta to influence}.

\begin{proof}[Proof of Proposition~\ref{prop:from Delta to influence}] 
Set     
\begin{align}\label{e.E_1=cupL(s)capR(s')}
    E &:= \bigcup_{s,s'\in\{0,1\}} \Ll( L(s)\cap R(s')\Rr), \\
{\rm Bad} &:= \big\{\text{$\exists$~$\lo \in \calL$ that  intersects at least two of~$B_\eps(0)$,~$B_\eps(y)$,~$B_\eps(x)$ and~$B_\eps(x+y)$}\big\}.
\end{align}
It is then easy to check that~$\{\mathcal{L}_0^2=\calL^{\rm odd}=\emptyset\}=E$, so 
\begin{align}\label{e.Delta(x,y,eps,delta)=phi_rewrite}
    \widetilde\Delta^\delta(x,y,\eps) = \rcP\Ll[(-1)^{|\mathcal{L}^2\setminus\mathcal{L}^2_0|}\one_E\Rr].
\end{align}

First, we claim that,  for any~$s,s'\in\{0,1\}$, 
\begin{align}\label{e.phi[LRG^C]<}
   \rcP \big[L(s)\cap R(s')\cap {\rm Bad}\big] \les \pi_2^\delta(\eps,|y|)^2\pi_1^\delta(\eps,|y|)^2 = \pi_1^\delta(\eps,|y|)^4 o_{\eps,\delta}(1).
\end{align}
Indeed, by a union bound and self-duality, it suffices to bound the probability that~$R(1)$ occurs and that there exists a loop~$\ell$ intersecting both~$B_\eps(0)$ and~$B_\eps(y)$.
When this holds, two-arm events occur in the annuli~$\Ann_0(\eps, |y| /4)$ and~$\Ann_y(\eps, |y| /4)$ and one-arm events in the annuli~$\Ann_x(\eps, |y| /4)$ and~$\Ann_{x+y}(\eps, |y| /4)$. 
By~\eqref{eq:mixing}, the joint occurrence of these events may be bounded by a constant multiple of the product of their individual probabilities. 
This provides the inequality in~\eqref{e.phi[LRG^C]<}. The equality comes from comparing the one- and two-arm probabilities. 

Outside of~${\rm Bad}$, the events~$L(s)\cap R(s')$ indexed by~$s,s'\in\{0,1\}$ are disjoint. Thus
\begin{align*}
  \rcP\Ll[(-1)^{|\mathcal{L}^2\setminus\mathcal{L}^2_0|}\one_{E\setminus {\rm Bad}}\Rr] 
    &=  \sum_{s,s'\in\{0,1\}} \rcP\Ll[(-1)^{|\mathcal{L}^2\setminus\mathcal{L}^2_0|}\one_{L(s)\cap R(s')\cap {\rm Bad}^c}\Rr].
\end{align*}
Still outside of~${\rm Bad}$,~$|\mathcal{L}^2\setminus\mathcal{L}^2_0|$ is even if and only~$L(s)\cap R(s)$ occur for some~$s\in \{0,1\}$. 
In other words, 
\begin{align*}
   \rcP\Ll[(-1)^{|\mathcal{L}^2\setminus\mathcal{L}^2_0|}\one_{E\setminus {\rm Bad}}\Rr]  
   &= \sum_{s=0,1} \Ll(\rcP\big[L(s)\cap R(s)\cap {\rm Bad}^c\big] - \rcP\big[L(s)\cap R(1-s)\cap {\rm Bad}^c\big] \Rr) .
\end{align*}

Using~\eqref{e.phi[LRG^C]<} we conclude that
\begin{align}
\frac{\widetilde\Delta^\delta(x,y,\eps)}{\pi_1^\delta(\eps,|y|)^4} 
&=  \sum_{s=0,1} \pi_1^\delta(\eps,|y|)^{-4}  \Big(\rcP\big[L(s)\cap R(s)\cap {\rm Bad}^c\big] - \rcP\big[L(s)\cap R(1-s)\cap {\rm Bad}^c\big]  +  o_{\eps,\delta}(1)\Big)\\
&=  2\pi_1^\delta(\eps,|y|)^{-4}  \Big(\rcP \big[L(1)\cap R(1) \big] - \rcP\big[L(1)\cap R(0)\big]  +  o_{\eps,\delta}(1)\Big)\\
&\asymp \Delta^\delta(|y|,|x|)^2 + o_{\eps,\delta}(1),
\end{align}
with the last line following from~\eqref{e.l.P(LR)-P(LR)=Delta^2}.
\end{proof}

\subsection{Proof of the upper bound in Lemma~\ref{l.P(LR)-P(LR)=Delta^2}}\label{sec:upper}

In this section, we will prove the existence of~$c > 0$ such that, for all 
\begin{align}\label{e.l.P(LR)-P(LR)=Delta^2_up_bd}
    \rcP[L(1)\cap R(1)] - \rcP[L(1)\cap R(0)]\leq c\, \Delta^\delta(|y|,|x|)^2\pi^\delta_1(\eps,|y|)^4.
\end{align}

The above is a fairly direct consequence of the following lemma, proved in \cite{duminil2022planar}.

\begin{lemma}\label{lem:iota0}
	There exists~$c >0$ such that the following holds. 
	For any~$R > r > \delta$ and
	any events~$A$ and~$B$ depending on the edges in~$\Lambda^\delta_{r}$ and outside~$\Lambda^\delta_{R}$, respectively, 
	\begin{align}\label{eq:iota0}
	\big|\tfrac{\rcP[A \cap B]}{\rcP[A]\rcP[B]}-1\big| \le c \Delta^\delta(r,R).
	\end{align}
	For any~$r_1,r_2 > \delta$ with~$r_1 + r_2 \leq |x|/2$, 
	and any events~$A$ and~$B$ depending on the edges in~$\Lambda^\delta_{r_1}$ and~$\Lambda^\delta_{r_2}(x)$, respectively, 
	\begin{align}\label{eq:Delta_as_mix12}
	\big|\tfrac{\rcP[A \cap B]}{\rcP[A]\rcP[B]}-1\big|\le c \Delta^\delta(r_1,|x|) \Delta^\delta(r_2,|x|).
	\end{align}
\end{lemma}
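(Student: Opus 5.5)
The plan is to reduce both estimates to a single coupling statement. The statement is: the influence of boundary conditions imposed at distance~$R$ on the configuration inside~$\Lambda_r$ is controlled by~$\Delta^\delta(r,R)$, and this control holds multiplicatively, i.e.\ relative to the probability of the event considered.

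\textbf{Step 1 (reduction to boundary conditions).} Since~$B$ depends only on edges outside~$\Lambda_R$, the domain Markov property gives
\[
\rcP[A\cap B]=\rcP\big[\one_B\,\rcP^{\xi}_{\Lambda_R}[A]\big],
\]
where~$\xi$ is the (random) boundary condition induced on~$\partial\Lambda_R$ by the configuration outside. It therefore suffices to prove that, uniformly in~$\xi$,
\[
\big|\rcP^{\xi}_{\Lambda_R}[A]-\rcP[A]\big|\le c\,\Delta^\delta(r,R)\,\rcP[A].
\]
Averaging this bound against~$\one_B$ then yields~\eqref{eq:iota0}.

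\textbf{Step 2 (coupling).} Use the increasing coupling~$\omega^0\le\omega^\xi\le\omega^1$ of~$\rcP^0_{\Lambda_R}$, $\rcP^\xi_{\Lambda_R}$ and~$\rcP^1_{\Lambda_R}$. Construct it by exploring from~$\partial\Lambda_R$ inwards, so that the configurations agree inside the region enclosed by the last explored circuit on which they coincide. Then~$|\rcP^\xi_{\Lambda_R}[A]-\rcP^0_{\Lambda_R}[A]|$ is bounded by the probability that~$\omega^1$ and~$\omega^0$ disagree in~$\Lambda_r$.

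The first key input, taken from~\cite{duminil2022planar}, is that this disagreement probability is at most~$C\Delta^\delta(r,R)$. The proof there compares the expected number of disagreeing edges near~$\Lambda_r$ with the crossing difference defining~$\Delta$, using monotonicity, quasi-multiplicativity~\eqref{eq:quasi_delta} and RSW.

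\textbf{Step 3 (relative version).} To obtain the factor~$\rcP[A]$, introduce an intermediate scale~$2r$. On the event that disagreement does not reach~$\partial\Lambda_{2r}$, the two configurations agree on~$\Lambda_{2r}$. On the complementary event, condition on the configuration outside~$\Lambda_{2r}$. The mixing property~\eqref{eq:mixing} (equivalently RSW, which gives~$\rcP^{\zeta}_{\Lambda_{2r}}[A]\asymp\rcP[A]$ for every boundary condition~$\zeta$) bounds the conditional probability of~$A$ by~$C\rcP[A]$. Combined with the bound~$C\Delta^\delta(2r,R)\asymp\Delta^\delta(r,R)$ on the probability that disagreement reaches~$\Lambda_{2r}$, this gives the claimed relative estimate. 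I expect the main obstacle to be the decoupling needed here: the event ``disagreement reaches~$\Lambda_{2r}$'' is not independent of~$A$. I would handle it by revealing the coupling only up to the outermost agreeing circuit outside~$\Lambda_{2r}$, so that the conditional law inside that circuit is again a random-cluster measure with some boundary condition.

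\textbf{Step 4 (two-box estimate).} For~\eqref{eq:Delta_as_mix12}, apply the same reasoning symmetrically to the two boxes. First condition on~$B$, which fixes the configuration in~$\Lambda_{r_2}(x)$. By Step~3 at scale~$|x|/2$ around the origin, the influence of this conditioning on~$A$ is governed by the event that a disagreement path issued from the~$B$-region reaches~$\Lambda_{r_1}$. Such a path must cross both~$\Ann_x(r_2,|x|/2)$ and~$\Ann(r_1,|x|/2)$. Using the coupling in each annulus separately and the mixing property to decouple the two regions, the probability of this is at most~$C\Delta^\delta(r_2,|x|)\Delta^\delta(r_1,|x|)$, where quasi-multiplicativity~\eqref{eq:quasi_delta} absorbs the change from~$|x|/2$ to~$|x|$. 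The relative factor~$\rcP[A]\rcP[B]$ is recovered exactly as in Step~3.
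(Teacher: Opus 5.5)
Your Step~1 is the right reduction: \eqref{eq:iota0} reduces to $|\phi^\xi_{\Lambda_R}[A]-\phi[A]|\le c\,\Delta^\delta(r,R)\,\phi[A]$ uniformly in $\xi$. This relative, boundary-uniform estimate is what the paper simply imports from \cite[eq.~(1.18)]{duminil2022planar}. The gap is in your reconstruction of it.

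Steps~2--3 rely on the coupled configurations agreeing inside the last explored circuit on which they coincide. This is false for the random-cluster model. The boundary condition induced on the interior of a circuit is determined by connections through its exterior, where $\omega^0$ and $\omega^1$ may still differ. The circuits that do decouple (open in $\omega^0$, or closed in $\omega^1$) only give a bound $(r/R)^c$ with a small RSW constant $c$, not $\Delta^\delta(r,R)$.

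Granting the absolute input you quote does not repair this. The event $\{\omega^0\neq\omega^1\text{ somewhere in }\Lambda_{2r}\}$ is not measurable with respect to the outside of $\Lambda_{2r}$, so ratio mixing cannot be applied to it. The outside-measurable event that the boundary conditions induced on $\Lambda_{2r}$ differ can occur (e.g.\ through a microscopic extra wiring) far more often than an actual disagreement inside, so its probability is not controlled by your input. Nor is a first-moment count of disagreeing edges the mechanism in \cite{duminil2022planar}: once disagreement reaches $\Lambda_r$ there are typically many disagreeing edges.

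The missing idea is the flower-domain/boost structure of Section~\ref{sec:basics}. You stop the exploration at well-separated flower domains, where the induced compatible boundary conditions either coincide or differ by a boost. You then control the probability that a boost survives down to scale $r$ by $\Delta^\delta(r,R)$, using that a boost on a flower domain at scale $2r$ produces a discrepancy of $\calH(r,r)$ with probability bounded below (as in Lemma~\ref{lem:iota3}). Ratio mixing inside the flower domain then gives the factor $\phi[A]$.

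Step~4 inherits this gap and has a further problem. There is no ``disagreement path'' in a random-cluster coupling, since influence is transmitted through connectivity. Moreover, \eqref{eq:mixing} cannot decouple disagreement crossing $\Ann_x(r_2,|x|/2)$ from disagreement crossing $\Ann(r_1,|x|/2)$. These are events of the coupling rather than of one configuration, the second only matters on the first, and the two annuli are not separated by any buffer.

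The product in \eqref{eq:Delta_as_mix12} comes from a sequential argument; this is what the paper means by deriving it from \eqref{eq:iota0} with the technique of \cite[eq.~(5.3)]{duminil2022planar}. Given the configuration in $\Lambda_{r_2}(x)$, the outside law is sandwiched between those with free and wired inner boundary conditions. Couple these by exploring outward from $\Lambda_{r_2}(x)$ up to a flower domain at scale of order $|x|$; there the induced boundary conditions differ (by a boost) with probability $O(\Delta^\delta(r_2,|x|))$. On that event, apply the relative, boundary-uniform form of \eqref{eq:iota0} to $A$ in the remaining domain, which contributes $c\,\Delta^\delta(r_1,|x|)\,\phi[A]$. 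The relative estimate is therefore an input to this second step, not something recovered at the end ``as in Step~3''.
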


The above relates~$\Delta$ to the mixing rate. A converse bound will be stated below, in Lemma~\ref{lem:iota2}.

\begin{proof}[Proof of Lemma~\ref{lem:iota0}]
	Equation~\eqref{eq:iota0} was proved in \cite[eq.~(1.18)]{duminil2022planar}. 
	Equation~\eqref{eq:Delta_as_mix12} is obtained from~\eqref{eq:iota0} using the technique of \cite[eq.~(5.3)]{duminil2022planar}; 
	see also \cite[Remark 5.5]{duminil2022planar}. 
\end{proof}

\begin{proof}[Proof of Equation~\eqref{e.l.P(LR)-P(LR)=Delta^2_up_bd}]
We use the notation~$\les$ introduced in Section~\ref{sec:notation}, with the constants independent of~$\eps$,~$\delta$,~$x$ and~$y$ fixed previously. 

By the same argument as in~\eqref{e.delta^c},
we have~$\rcP[R(0)]=\rcP[R(1)] + o_\delta(1)$ for some error satisfying~$\lim_{\delta\to0} o_\delta(1)=0$. 
Then,
\begin{align}\label{eq:covLR}
     \rcP[L(1)\cap R(1)] - \rcP[L(1)\cap R(0)]= \sum_{s=0}^1 \big|\rcP[L(1)\cap R(s)] - \rcP[L(1)]\rcP[R(s)]\big| + o_\delta(1).
\end{align}

Let us bound the right-hand side for~$s=1$; the proof for~$s=0$ follows the same lines. 
Write~$L=L(1)$ and~$R=R(1)$ for brevity. 
We may not directly apply~\eqref{eq:Delta_as_mix12} as~$L$ and~$R$ are not local events. 
To overcome this, we decompose them over different scales, depending on how far one needs to go to ``measure'' those events. Set the number of scales as
\begin{align}\label{e.K=logeta^-1/log2}
    K = \left\lfloor\frac{\log (|x|/|y|)}{\log 2}\right\rfloor-2.
\end{align}
Consider the events~$L_k$ for~$0\le k \le 2K+1$ defined by
\begin{align}\label{e.L_k=}
L_k= \begin{cases}
    \Ll\{\text{$B_\eps(0)\longleftrightarrow B_\eps(y)$ in~$\Lambda_{2|y|}$}\Rr\}, & \text{if } k=0;
    \\
    \Ll\{\text{$B_\eps(0) \longleftrightarrow B_\eps(y)$ in~$\Lambda_{2^{k+1}|y|}$ but not in~$\Lambda_{2^k |y|}$}\Rr\}, & \text{if }k\in\{1,\dots,K\};
    \\
    \Ll\{\text{$B_\eps(0) \longleftrightarrow B_\eps(y)$ in~$\Lambda_{2^{2K-k}|y|}(x)^{\comple}$ but not in~$\Lambda_{2^{2K-k+1}|y|}(x)^\comple$}\Rr\}, & \text{if }k\in\{K+1,\dots,2K\};
    \\
    \Ll\{\text{$B_\eps(0) \longleftrightarrow B_\eps(y)$ but not in~$\Lambda_{2|y|}(x)^\comple$}\Rr\}, & \text{if }k=2K+1.
\end{cases}
\end{align}
Define~$R_{k}$ for~$k\in\{0,\dots, 2K+1\}$ similarly, with the roles of~$0$ and~$x$ reversed.
Then, the events~$L_k$ form a partition of~$L$, and similarly for~$R$. 
In particular
\begin{align}\label{e.L cap R}
    \rcP[L\cap R]-\rcP[L]\rcP[R]=\sum_{k,\ell=0}^{2K+1} \rcP[L_k\cap R_{\ell}] - \rcP[L_k]\rcP[R_\ell].
\end{align}

For~$L_k$ to occur, one-arm events occur in~$\Lambda_{|y|/4}(0)$ and~$\Lambda_{|y|/4}(y)$. 
Moreover, when~$0 \leq k \leq K$, the four arm event (as in the definition~\eqref{eq.2k_arm_event} of~$\pi_{4}^\delta$) needs to occur in~$\Ann(2|y|,2^k |y|)$.
When~$k >K$, four-arm events occur in~$\Ann(2|y|,2^K |y|)$ and in~$\Ann_x(2^{2K-k+1}|y|,2^K |y|)$. 
Similar considerations hold for~$R_k$. 
Applying~\eqref{eq:mixing} and the quasi-multiplicativity of~$\pi_4$ (the equivalent of Proposition~\ref{eq:quasi_delta} for~$\pi_4$), 
we find 
\begin{align}
	\phi[R_k ] = \phi[L_k ]& \les
	\begin{cases}
		 \pi_1^\delta(\eps , |y|)^2 \pi_4^\delta(|y|,2^k |y|)
		 \qquad & \text{ if~$0 \leq k \leq K$}\\[.5em]
		 \pi_1^\delta(\eps , |y|)^2 \pi_4^\delta(|y|,|x|)\pi_4^\delta (2^{K-k}|x|,|x|) 
		  \qquad & \text{ if~$K < k \leq 2K$}		  
	\end{cases}		\\[.5em]
	\phi[ L_k \cap R_\ell ]& \les  \pi_1^\delta(\eps , |y|)^4 \pi_4^\delta(|y|,|x|)^2 \qquad\qquad\qquad\qquad\qquad\! \text{ if~$ k + \ell  \geq 2K$}.
\end{align}

It was proved in \cite[Theorem 1.6]{duminil2022planar} that there exist constants~$C, c > 0$ such that 
\begin{align}
	\pi_4(r,R) \leq c \Delta(r,R) (\tfrac{r}R)^c \qquad\text{ for all~$r \leq R$.}
\end{align}
Applying the above, Lemma~\ref{lem:iota0} to~$L_k$ and~$R_\ell$ with~$k+\ell < 2K$, 
using the mixing property~\eqref{eq:mixing} and the quasi-multiplicativity \eqref{eq:quasi_delta} of~$\Delta$, 
we conclude that 
\begin{align}
|\phi[L_k\cap R_{\ell}] - \phi[L_k]\phi[R_\ell]| &\les  \pi_1^\delta(\eps , |y|)^4 \Delta^\delta(|y|,|x|)^2 \,2^{-c(k+ \ell)} && \text{ if~$\ell + k < 2K$ and}\\
| \phi[L_k\cap R_{\ell}] - \phi[L_k]\phi[R_\ell] | &\les \pi_1^\delta(\eps , |y|)^4 \Delta^\delta(|y|,|x|)^2 \,2^{-2c K} && \text{ if~$\ell + k \geq 2K$.}
\end{align}
Inserting this into~\eqref{e.L cap R}, we conclude that 
\begin{align}
    \big|\phi[L(1)\cap R(1)]-\phi[L(1)]\phi[R(1)] \big|\les \pi_1^\delta(\eps , |y|)^4 \Delta^\delta(|y|,|x|)^2.
\end{align}
Together with~\eqref{eq:covLR}, the above and its equivalent version for~$R(0)$ imply~\eqref{e.l.P(LR)-P(LR)=Delta^2_up_bd}.
\end{proof}

\subsection{Crash course on coupling techniques used for the lower bound}\label{sec:basics}

In this section, we briefly describe the notion of flower domains and the coupling arguments used in \cite{duminil2022planar}. 
For more details, we refer to \cite[Sections 3 and 4]{duminil2022planar}.

\begin{definition}[Flower domain]
Fix~$R \geq 1$ and a configuration~$\omega$ on $\Z^2$. The \emph{inner flower domain} on~$\Lambda_R$ is obtained as follows. 
Write~$\calI$ for the union of all interfaces\footnote{Here, we consider interfaces as the closure of the space contained between the primal and dual edges.} between the primal and dual edges contained in~$\Ann(R,3R/2)$ that start on~$\partial \Lambda_{3R/2}$; these may end on~$\partial \Lambda_{3R/2}$ or on~$\partial \Lambda_{R}$. Write~$\calF_{\rm in}$ for the closure of the connected component of~$\Lambda_R$ in~$\bbR^2 \setminus \calI$. 

The \emph{outer flower domain} on~$\Lambda_{2R}$ is the closure of the infinite connected component~$\calF_{\rm out}$ of~$\bbR^2 \setminus \calJ$, 
where~$\calJ$ denotes the union of all interfaces in~$\Ann(3R/2,2R)$ that start on~$\partial \Lambda_{3R/2}$.

\end{definition}
%
%

%
%
%

The boundaries of~$\calF_{\rm in}$ and~$\calF_{\rm out}$ may be formed either of a circuit of primal or dual edges, 
or of an even number of alternating paths of the primal and dual graphs. 
In the latter case, we call these paths {\em petals} and index them~$P_1,\dots, P_{2k}$ with~$P_1$ being primal. 
We say that~$\calF_{\rm in}$ and~$\calF_{\rm out}$ are {\em well-separated} when they are of the first type, 
or when their petals have endpoints at distances at least~$R/2$ from each other. 
See Figures~\ref{fig:iota2} and~\ref{fig:iota3} for examples. 

A boundary condition~$\xi$ on~$\calF_{\rm in}$ or~$\calF_{\rm out}$ is said to be \emph{compatible} 
if all vertices of any primal petal are wired together and all vertices of dual petals are wired to no other vertex of the boundary of the flower domain.
The latter is equivalent to saying that all dual vertices of any dual petal are wired together in the dual boundary condition. 
Note that for flower domains with at least four petals, there exist multiple compatible boundary conditions: they depend on the wirings {\em between} the different primal petals.

\begin{definition}[Boosting pair of boundary conditions] 
A \emph{boosting pair} of boundary conditions for an inner or outer flower domain~$\mathcal{F}$ with at least four petals 
is a pair of boundary conditions~$(\xi,\xi')$ such that
\begin{itemize}
    \item~$\xi$ and~$\xi'$ are compatible with~$\mathcal{F}$;
    \item~$\xi\leq\xi'$;
    \item there exist two primal petals of~$\mathcal{F}$ that are wired together in~$\xi'$ but not in~$\xi$.
\end{itemize}
We also call boosting pair any pair of boundary conditions~$(\zeta,\zeta')$ with~$\zeta \leq \xi \leq \xi'\leq \zeta'$ and~$(\xi,\xi')$ satisfying the conditions above. 
\end{definition}

Next, we introduce the notion of  {\em double four-petal flower domain}.
While seemingly artificial, it is useful in arguments involving couplings via decision trees, as defined below.

\begin{definition}\label{lem:DFPFD}
Fix~$R\geq 1$ and a configuration~$\omega$. We say that there exists a \emph{double four-petal flower domain} between~$\Lambda_{R}$ and~$\Lambda_{2R}$ if
\begin{itemize}
    \item~$\Fout$, the outer flower domain on~$\Lambda_{2R}$ is well-separated and has exactly four petals $\Pout_1,\dots,\Pout_4$;
    \item~$\Fin$, the inner flower domain on~$\Lambda_R$ is well-separated and has exactly four petals~$\Pin_1,\dots,\Pin_4$;
    \item~$\Pin_1$ is connected to~$\Pout_1$ and~$\Pin_3$ to~$\Pout_3$ in~$\omega\cap \Fin^\comple\cap\Fout^\comple$;
    \item~$\Pin_2$ is connected to~$\Pout_2$ and~$\Pin_4$ to~$\Pout_4$ in~$\omega^*\cap \Fin^\comple\cap\Fout^\comple$.
\end{itemize}
\end{definition}

A double four-petal flower domain is a way of almost decoupling the configuration inside~$\Lambda_R$ from the outside of~$\Lambda_{2R}$. 
Indeed, the only influence the configuration in~$\calF_{\rm out}$ has on~$\calF_{\rm in}$ is by connecting~$\Pout_1$ to~$\Pout_3$ or not. 
It is essential (and immediate from the definition) that a double four-petal flower domain may be explored without revealing any of the edges in~$\calF_{\rm in}$ or~$\calF_{\rm out}$. 
We restate \cite[Lemma~3.4]{duminil2022planar} below.

\begin{lemma}\label{l.double_flower}
   There exists~$c> 0$ such that, for any~$R$ large enough,
    \begin{equation*}
        \rcP\left[\exists\text{ a double four-petal flower domain between }\Lambda_R\text{ and }\Lambda_{2R}\right] > c.
    \end{equation*}
\end{lemma}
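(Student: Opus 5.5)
The plan is to construct the double four-petal flower domain explicitly from a small number of crossing events, each with probability bounded below by \eqref{eq:RSW}, and then use RSW-type gluing (FKG for increasing events, applied separately to primal and dual pieces) to ensure the resulting inner and outer flower domains have exactly four well-separated petals. Concretely, we split $\Ann(R,2R)$ into the two sub-annuli $\Ann(R,3R/2)$ and $\Ann(3R/2,2R)$. We fix four disjoint angular sectors, say around the four axis directions, with the two horizontal sectors reserved for primal paths and the two vertical ones for dual paths.

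In the first step we require, in each sector, a primal (respectively dual) crossing from $\partial\Lambda_R$ to $\partial\Lambda_{2R}$ staying inside a thin rectangle of width of order $R$ around that axis. The intersection of the two primal crossings has probability bounded below by FKG and \eqref{eq:RSW}, and the same holds for the two dual crossings by self-duality. These four arms force every interface started on $\partial\Lambda_{3R/2}$ to be confined to one of the four quadrant regions between consecutive arms.

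In the second step we control the interfaces inside each quadrant region. We ask for four further events, one per quadrant: in the quadrant between the primal arm $P_1$ and the dual arm $P_2$, there is a primal path in $\Ann(R,3R/2)$ joining $P_1$ to a point near $P_2$ and a dual path in $\Ann(3R/2,2R)$ doing the same. The effect is that exactly one interface starting on $\partial\Lambda_{3R/2}$ separates the primal arm from the dual arm in that quadrant, inside and outside. Combined with the first step, this makes both $\Fin$ and $\Fout$ have exactly four petals, with endpoints located in prescribed boxes at mutual distance at least $R/2$; hence both domains are well-separated. The arms from the first step then supply the required connections $\Pin_1\leftrightarrow\Pout_1$, $\Pin_3\leftrightarrow\Pout_3$ in $\omega$ and $\Pin_2\leftrightarrow\Pout_2$, $\Pin_4\leftrightarrow\Pout_4$ in $\omega^*$, all inside $\Fin^\comple\cap\Fout^\comple$.

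The main obstacle is that these events mix increasing (primal) and decreasing (dual) requirements, so FKG cannot be applied to the whole intersection directly. I would first explore the primal arms from the outermost (or lowest) crossing, and then use the domain Markov property together with \eqref{eq:RSW}, which is uniform in boundary conditions, to obtain the dual crossings in the unexplored region with conditional probability bounded below. The events in each region are built sequentially in this way, and the mixing property \eqref{eq:mixing} controls the interaction between the quadrants. A further technical point is excluding extra interfaces that would create more than four petals. For this I would demand that, in each quadrant, the region between the arms be "closed off" by a primal circuit portion near $\partial\Lambda_{3R/2}$ on one side and a dual one on the other. This again costs only a constant factor via RSW, which yields the constant $c>0$ independent of $R$.
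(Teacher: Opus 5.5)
The paper does not reprove this lemma; it is quoted from \cite[Lemma~3.4]{duminil2022planar}. Your first step is fine: four alternating arms in fixed sectors, obtained by exploring the primal arms and then using \eqref{eq:RSW} (uniform in boundary conditions) for the dual ones. It confines all interfaces to the four quadrant regions and provides the connections $\Pin_j\leftrightarrow\Pout_j$.

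The gap is the claim that your second-step events make $\Fin$ and $\Fout$ have \emph{exactly} four petals. The petals of $\Fin$ correspond to the interfaces of $\Ann(R,3R/2)$ running from $\partial\Lambda_{3R/2}$ to $\partial\Lambda_R$. In the quadrant between the primal arm $P_1$ and the dual arm $P_2$ their number is odd, but nothing you impose makes it equal to one. A primal path from $P_1$ to a point near $P_2$, or your primal and dual ``closing-off'' arcs, only force these interfaces through a gap of macroscopic width. That gap can contain, in the order from $P_1$ to $P_2$, a dual crossing and then a primal crossing from $\partial\Lambda_{3R/2}$ to $\partial\Lambda_R$. This gives three interfaces in the quadrant, hence at least six petals, while every event you require still occurs. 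The same happens in $\Ann(3R/2,2R)$ for $\Fout$.

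The underlying issue is that \emph{exactly one} crossing interface in the quadrant is equivalent to the primal cluster of $P_1$ and the dual cluster of $P_2$ (inside the quadrant) being adjacent across a single medial edge. Equivalently, there is a path from $P_1$ to $P_2$ that is primal and then dual. This event is neither increasing nor decreasing, so it cannot be produced at constant cost by combining crossing events via FKG and \eqref{eq:RSW}. If by ``near'' you meant ``adjacent'', the implication becomes correct, but your probability estimate no longer covers the event.

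You need a genuine exploration step. For instance, reveal the primal cluster of $P_1$ in the quadrant; its outer boundary then carries free boundary conditions for the unexplored domain. Then show that, with conditional probability bounded below, a dual path from $P_2$ reaches this boundary. This follows by comparing the conditional law, via monotonicity in boundary conditions, with a fixed-domain measure and applying \eqref{eq:RSW}. Once the two clusters touch, every crossing interface must pass through one medial edge.

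This has to be done in both sub-annuli. You also need additional paths to $\partial\Lambda_R$ and $\partial\Lambda_{2R}$ landing in prescribed arcs, to place the petal endpoints and get well-separation. Finally, \eqref{eq:mixing} only decouples $\Lambda_r$ from $\Lambda_R^c$ and does not control interactions between neighbouring quadrants; the domain Markov property combined with \eqref{eq:RSW} is what does that.
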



Next, we describe how to couple different random-cluster measures in an increasing fashion using decision trees. 
Consider a finite subgraph~$G=(V,E)$ of~$\Z^2$ with~$|E|=n$. 
Let~$(U_e)_{e\in E}$ be a set of i.i.d.~uniform random variables on~$[0,1]$. For~$e=(e_1,\dots,e_n)\in E^n$ and~$t\leq n$, we write~$e_{[t]} := (e_1,\dots,e_t)$ and~$U_{[t]} := (U_{e_1},\dots,U_{e_t})$.
We extend this notation to configurations by writing~$\omega_{[t]}:=(\omega_{e_1},\dots,\omega_{e_t})$. 
Finally, let~$G_t$ denote the subgraph obtained from~$G$ by removing the edges~$e_{[t]}$.

\begin{definition}[Decision tree, stopping time {\cite[Definition~2.4]{duminil2022planar}}]
    A \emph{decision tree} is a pair~$\mathbf{T} = (e_1, (\psi_t)_{2\leq t\leq n})$ where~$e_1\in E$ and for each~$2\leq t\leq n$, the input of~$\psi_t$ is~$(e_{[t-1]},U_{[t-1]})$ and it returns~$e_t\in E\setminus\{e_1,\dots,e_{t-1}\}$. A \emph{stopping time} for~$\mathbf{T}$ is a random variable~$\tau$ taking values in~$\{1,\dots,n,\infty\}$, for which each~$\{\tau\leq t\}$ is measurable in terms of~$(e_{[t]}, U_{[t]})$.
\end{definition}

The following statement describes the coupling procedure induced by a decision tree. It is an increasing coupling between two ordered random-cluster measures. This is a restatement of \cite[Proposition~2.6]{duminil2022planar}.

\begin{proposition}
    Consider boundary conditions~$\xi\leq \xi'$ on~$G$. 
    Let~$\mathbf{T} = \left(e_1,(\psi_t)_{2\leq t\leq n}\right)$ be a decision tree and~$(U_e)_{e\in E}$ be a set of i.i.d.~uniform random variables on~$[0,1]$ under some probability measure~$\P_\mathbf{T}$. Define~$\omega,\omega'\in\{0,1\}^E$ inductively by
    \begin{align*}
        \omega_{e_{t+1}} &:=\one\big(U_{e_{t+1}}\geq \rcP_{G_t}^{\xi_t}[e_{t+1}\text{ closed}]\big),\\
        \omega'_{e_{t+1}} &:= \one\big(U_{e_{t+1}}\geq \rcP_{G_t}^{\xi'_t}[e_{t+1}\text{ closed}]\big),\quad\forall 0\leq t<n,
    \end{align*}
where~$\xi_t$ and~$\xi'_t$ are the boundary conditions on~${G_t}$ induced by~$\omega^\xi_{[t]}$ and~$\big(\omega'_{[t]}\big)^{\xi'}$ respectively. When~$t=0$, these are~$\xi$ and~$\xi'$. 

Then, $\omega$ and~$\omega'$ have laws~$\rcP^{\xi}_{G}$ and~$\rcP^{\xi'}_{G}$, respectively, and~$\omega\leq \omega'$~$\P_\mathbf{T}$-almost surely.

The same procedure may be applied to couple increasingly~$\phi_G^{\xi}$ to~$\phi_G^{\xi}[\cdot\,|\,A]$ for an increasing event~$A$. We will also use it to couple (finite restrictions of) infinite volume measures. 
\end{proposition}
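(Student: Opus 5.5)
The plan is to prove both claims together by induction on the number $t$ of revealed edges, exploiting two facts: the decision tree only ever queries fresh uniform variables, and the random-cluster model has the domain Markov property.

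\textbf{Step 1: the query variable is fresh.} Let $\mathcal G_t:=\sigma(e_{[t]},U_{[t]})$. Since $e_{t+1}=\psi_{t+1}(e_{[t]},U_{[t]})$ is $\mathcal G_t$-measurable and lies in $E\setminus\{e_1,\dots,e_t\}$, I will show that $U_{e_{t+1}}$ is uniform on $[0,1]$ and independent of $\mathcal G_t$. To do this, decompose on the finitely many values $f$ of $e_{t+1}$. On $\{e_{t+1}=f\}$ we have $U_{e_{t+1}}=U_f$, and $U_f$ is independent of $(U_g)_{g\neq f}$. The latter family determines $\mathcal G_t$ on this event, because $f$ was not yet revealed. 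Note also that $\omega_{[t]}$, $\omega'_{[t]}$, $\xi_t$ and $\xi'_t$ are all $\mathcal G_t$-measurable.

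\textbf{Step 2: the marginal laws.} Conditionally on $\mathcal G_t$, Step 1 gives
\[
\P_{\mathbf T}\big[\omega_{e_{t+1}}=1\,\big|\,\mathcal G_t\big]=\rcP^{\xi_t}_{G_t}[e_{t+1}\text{ open}].
\]
By the domain Markov property, the right-hand side equals $\rcP^\xi_G[e_{t+1}\text{ open}\mid \omega_{e_s},\,s\le t]$, evaluated at the revealed values. The adaptive order of revelation causes no trouble, since the chain rule applies along any (random but predictable) ordering. Multiplying these conditional probabilities for $t=0,\dots,n-1$ shows that $\P_{\mathbf T}[\omega=\eta]=\rcP^\xi_G[\eta]$ for every $\eta\in\{0,1\}^E$. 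The same argument applies to $\omega'$.

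\textbf{Step 3: monotonicity.} I will show inductively that $\omega_{[t]}\le\omega'_{[t]}$. Given this, the boundary condition induced on $G_t$ by $\omega_{[t]}$ together with $\xi$ is dominated by the one induced by $\omega'_{[t]}$ together with $\xi'$, because $\xi\le\xi'$ and $\omega_{[t]}\le\omega'_{[t]}$; hence $\xi_t\le\xi'_t$. Comparison between boundary conditions (a consequence of FKG for $q\ge1$) then yields
\[
\rcP^{\xi_t}_{G_t}[e_{t+1}\text{ closed}]\ge\rcP^{\xi'_t}_{G_t}[e_{t+1}\text{ closed}],
\]
so the threshold rule forces $\omega_{e_{t+1}}\le\omega'_{e_{t+1}}$, which closes the induction.

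\textbf{Step 4: the variant and the main obstacle.} For coupling $\rcP^\xi_G$ with $\rcP^\xi_G[\cdot\mid A]$, I use the thresholds $\rcP^\xi_G[e_{t+1}\text{ closed}\mid\omega'_{[t}]]$ and $\rcP^\xi_G[e_{t+1}\text{ closed}\mid A,\,\omega'_{[t]}]$ respectively. The required inequality
\[
\rcP[e\text{ open}\mid A,\,\omega'_{[t]}]\;\ge\;\rcP[e\text{ open}\mid\omega'_{[t]}]\;\ge\;\rcP[e\text{ open}\mid\omega_{[t]}]
\]
splits into two parts. The first inequality follows from FKG applied on $G_t$ with boundary condition $\xi'_t$: the event $A$, with the revealed edges fixed, is still increasing. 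The second is comparison of boundary conditions, exactly as in Step 3. The main obstacle is infinite volume, where there is no finite edge set to exhaust. There I would run the procedure on a finite box $\Lambda_N$ with the infinite-volume conditional probabilities as thresholds, still justified by the domain Markov property. I would then pass to the limit $N\to\infty$ using uniqueness of the infinite-volume measure at $q=4$, together with the fact that the coupling's finite-dimensional marginals are consistent.
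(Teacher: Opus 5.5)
Your proposal is correct and is the standard argument for this statement, which the paper does not prove but restates from \cite[Proposition~2.6]{duminil2022planar}. The two ingredients are the domain Markov property and chain rule along the predictable order of edges, using that $U_{e_{t+1}}$ is fresh given $(e_{[t]},U_{[t]})$, for the marginals; and the induction $\omega_{[t]}\le\omega'_{[t]}\Rightarrow\xi_t\le\xi'_t$, combined with comparison between boundary conditions, for $\omega\le\omega'$.

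Two minor points. In Step 4 the threshold for $\omega$ should be conditioned on $\omega_{[t]}$, not $\omega'_{[t]}$, as your inequality chain already assumes. For finite restrictions of infinite-volume measures, neither a limit $N\to\infty$ nor uniqueness is needed: the chain rule gives the marginals, and monotonicity only requires that the infinite-volume conditional probability of an edge given finitely many revealed edges is increasing in those edges, which follows from finite-volume comparison by passing to the limit, with finite energy making the conditioning harmless.
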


We often describe the decision tree, and hence the increasing coupling of two measures, 
as ``revealing'' edges in a certain order, potentially depending on the state of the already revealed edges in one of the configurations~$\omega$ or~$\omega'$.
Throughout, for an increasing coupling~$\bbP_\mathbf{T}$, we always refer to~$\omega\le \omega'$ as the coupled pair of configurations.

\subsection{Lower bound in Lemma~\ref{l.P(LR)-P(LR)=Delta^2}}\label{sec:lower}

In this section, we prove the existence of~$c > 0$ such that 
\begin{align}\label{e.l.P(LR)-P(LR)=Delta^2_l_bd}
    \rcP[L(1)\cap R(1)] - \rcP[L(1)\cap R(0)]\geq c \,\Delta^\delta(|y|,|x|)^2\pi^\delta_1(\eps,|y|)^4 + o_\delta(1).
\end{align}

We will use here the notation~$\ges$ introduced in Section~\ref{sec:notation}, with the constants independent of~$\eps$,~$\delta$,~$x$ and~$y$ fixed previously, or of any future fixed quantities.
As stated in  Section~\ref{sec:upper},~$ \rcP[R(0)] =  \rcP[R(1)] + o_\delta(1)$. 
Using this, the FKG inequality and~\eqref{eq:mixing}, we find
\begin{align}
    \rcP[L(1)\cap R(1)] - \rcP[L(1)\cap R(0)]
  &  \geq  \rcP[L(1)] \big(\rcP[R(1)\,|\, L(1)] -  \rcP[R(1)] + o_\delta(1) \big)\\
   & \ges  \pi^\delta_1(\eps,|y|)^2\big(\rcP[R(1)\,|\, L(1)] -  \rcP[R(1)] + o_\delta(1) \big).
   \label{eq:LcapR_lb1}
\end{align}

We will now bound the~$\rcP[R(1)\,|\, L(1)] -  \rcP[R(1)]$ from below. This is seen as a coupling argument between~$\rcP[\cdot\,|\, L(1)]$ and~$\rcP$,
with the quantity of interest being the probability of having~$R(1)$ occur for the higher configuration, but not the lower one. 
These types of arguments appeared in \cite{duminil2022planar}; we will present them briefly, stressing the specificities of our setting. 

The proof is based on three lemmas. We first state them, prove~\eqref{e.l.P(LR)-P(LR)=Delta^2_l_bd}, then prove the lemmata. 

\begin{lemma}\label{lem:iota1}
There exists a coupling~$\bbP$ between~$\rcP$ and~$\rcP[\,\cdot\,|\,L(1)]$ via a decision tree and a stopping time~$\tau$ such that, 
when~$\tau<\infty$, 
\begin{itemize}
	\item~$\calF_\tau = \delta \Z^2 \setminus e_{[\tau]}$ is a well-separated outer flower domain on~$\Lambda_{4|y|}$
	and compatible boundary conditions $(\xi,\xi')$ satisfy~$\xi < \xi'$; 
	\item the law of~$\omega$ on~$\calF_\tau$ conditionally on~$\omega_{[\tau]}$ is~$\rcP_{\calF_\tau}^{\xi}$;
	\item the law of~$\omega'$ on~$\calF_\tau$ conditionally on~$\omega'_{[\tau]}$ stochastically dominates~$\rcP_{\calF_\tau}^{\xi'}$.
\end{itemize}
Moreover,~$\bbP[\tau < \infty] \ges 1$. 
\end{lemma}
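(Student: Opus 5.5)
We construct the decision tree in two stages, following \cite[Sections 3 and 4]{duminil2022planar}. In the first stage we reveal the configuration in a neighbourhood of $B_\eps(0)\cup B_\eps(y)$, i.e.\ inside $\Lambda_{3\cdot 2|y|}$ where we set $R:=2|y|$, so that $\Lambda_{2R}=\Lambda_{4|y|}$. We proceed by exploring from $\partial\Lambda_{3R/2}$ inwards and outwards the interfaces that define $\Fin$ on $\Lambda_R$ and $\Fout$ on $\Lambda_{2R}$, identically in $\omega$ and $\omega'$ as long as the revealed edges agree. Since $\Fout$ is explored from $\partial\Lambda_{3R/2}$ without revealing edges of $\Fout$ itself, the configuration on $\Fout$ conditionally on the revealed edges is a random-cluster measure with the boundary condition induced by the revealed edges. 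For $\omega$ this is the correct conditional law $\rcP^{\xi}_{\Fout}$. For $\omega'$, sampled from $\rcP[\cdot\,|\,L(1)]$, the conditional law on $\Fout$ is a random-cluster measure conditioned on an event that is increasing in the configuration outside; by FKG it dominates the measure with the induced boundary condition $\xi'$. This gives the second and third bullets, provided we stop exactly when $\Fout$ has been determined.

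The stopping time $\tau$ is the first time at which (a) the explored region contains a double four-petal flower domain between $\Lambda_R$ and $\Lambda_{2R}$ in both $\omega$ and $\omega'$ with the same petals (in particular $\Fout$ is well separated with four petals), (b) inside $\Fin$ the configuration $\omega'$ realises $L(1)$ and connects $B_\eps(0)$ to both primal inner petals $\Pin_1,\Pin_3$, hence connects $\Pout_1$ to $\Pout_3$ through $\Fin$ and the arms of the double flower domain, and (c) in $\omega$ the inner region does \emph{not} connect $\Pin_1$ to $\Pin_3$. We then have $\xi<\xi'$: the two primal outer petals are wired in $\xi'$ but not in $\xi$. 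On failure of any of these conditions we set $\tau=\infty$. The decision tree explores $\Fin$ and the annulus $\Ann(R,2R)$ with the same edge order for both configurations, and couples the edges via the uniforms $U_e$, so monotonicity $\omega\le\omega'$ is preserved.

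The main task is proving $\bbP[\tau<\infty]\ges 1$. By Lemma~\ref{l.double_flower}, a double four-petal flower domain exists with probability $\ges1$ under $\rcP$. Under $\rcP[\cdot\,|\,L(1)]$ it also exists with probability $\ges 1$, using the mixing of Lemma~\ref{lem:mixing_IIC} or \eqref{eq:mixing} conditioned on arm events, since $L(1)$ only forces arms near $0,y$ at scale $|y|$ and a connection within $\Lambda_{2|y|}$ with positive conditional probability by RSW. We must then show that, given the flower domain, $\omega'$ connects $B_\eps(0)$, $B_\eps(y)$ and both inner primal petals while $\omega$ leaves $\Pin_1,\Pin_3$ disconnected inside $\Fin$, all with conditional probability $\ges1$.

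For $\omega'$, conditioned on $L(1)$, this follows from RSW and separation of arms in $\Fin$, at a uniform cost because $L(1)$ already provides one-arm events to scale $|y|/4$. For $\omega$, disconnecting the petals inside $\Fin$ happens with probability $\ges 1$ by self-duality and RSW, since a dual crossing separating $\Pin_1$ from $\Pin_3$ is enough. The obstacle is doing both simultaneously under a single increasing coupling: we need the event where the higher configuration has the connection and the lower does not to have probability bounded below. I would handle this as in \cite[Lemma 4.x]{duminil2022planar}: first reveal $\omega$ inside $\Fin$ and use that, conditioned on $\omega$ having a dual separation and on $L(1)$ failing for $\omega$, the conditional probability under the increasing coupling that $\omega'$ still realises the required primal connections is $\ges 1$. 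This uses that $\rcP[L(1)]\asymp \pi_1^\delta(\eps,|y|)^2$, so $\omega$ fails $L(1)$ with probability close to $1$, together with the finite-energy/RSW comparison between the two boundary-condition measures in $\Fin$.
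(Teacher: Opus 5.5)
\textbf{The gap is the bound $\bbP[\tau<\infty]\ges 1$.} Your stopping time depends on joint properties of the coupled pair:
\begin{itemize}
\item the \emph{same} double four-petal flower domain in $\omega$ and $\omega'$, so $\omega'$ must agree with $\omega$ on every revealed dual petal and dual arm;
\item $\omega'$ realises $L(1)$ inside $\Fin$ and attaches $B_\eps(0)$ to both $\Pin_1$ and $\Pin_3$;
\item $\omega$ does not connect $\Pin_1$ to $\Pin_3$.
\end{itemize}
You show that each of these has probability $\ges1$ under the relevant marginal, $\rcP$ or $\rcP[\cdot\,|\,L(1)]$. That says nothing about their intersection under an increasing coupling. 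The conditional law of $\omega'$ given $\omega$ in a decision-tree coupling is not controlled by RSW or finite energy. Finite-energy bounds degrade with the number of edges, and the connections you demand from $\omega'$ use many edges. The fact that $\omega\notin L(1)$ with high probability does not help either. Your last paragraph names this obstacle but does not resolve it, so the key estimate of the lemma is missing.

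The missing idea, which the paper uses, is to let the decision tree and $\tau$ depend on $\omega$ alone, through an $\omega$-event that forces the needed structure on $\omega'$ deterministically.
\begin{itemize}
\item In $\omega$, reveal the flower domains on $\Lambda_{2|y|}$ and $\Lambda_{4|y|}$ and the edges between them, and check for a double four-petal flower domain.
\item Then reveal all of $\Fin$ and stop if $H$ holds: in $\omega\cap\Fin$, $\Pin_1$ is not connected to $\Pin_3$, and there are open circuits around $B_\eps(0)$ and around $B_\eps(y)$, connected to $\Pin_1$ and $\Pin_3$ respectively.
\item Now $\bbP[\tau<\infty]$ is an $\rcP$-probability, which is $\ges 1$ by Lemma~\ref{l.double_flower} and RSW, and $\omega$ induces $\xi$ on $\Fout$.
\item Since $\omega\le\omega'$ and $\omega'\in L(1)$ almost surely, any open path of $\omega'$ joining the two balls must cross these circuits. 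Hence in $\omega'_{[\tau]}$, $B_\eps(0)$ is attached to $\Pout_1$ and $B_\eps(y)$ to $\Pout_3$.
\item If $L(1)$ is realised by $\omega'_{[\tau]}$, the induced boundary condition $\zeta$ satisfies $\zeta\ge\xi'$.
\item Otherwise, let $\calQ^{\rm out}_1$ and $\calQ^{\rm out}_3$ be the vertices of $\partial\Fout$ attached in $\omega'_{[\tau]}$ to $B_\eps(0)$ and $B_\eps(y)$. The law of $\omega'$ on $\Fout$ is $\rcP^{\zeta}_{\Fout}[\cdot\,|\,\calQ^{\rm out}_1\leftrightarrow\calQ^{\rm out}_3]$. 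This equals the same conditioning of $\rcP^{\zeta'}_{\Fout}$, where $\zeta'$ wires $\calQ^{\rm out}_1$ to $\calQ^{\rm out}_3$. By FKG it therefore dominates $\rcP^{\zeta'}_{\Fout}\geq_{\rm st}\rcP^{\xi'}_{\Fout}$.
\end{itemize}

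This wiring trick is also what your third-bullet argument lacks in general. FKG alone only gives domination of $\rcP^{\zeta}_{\Fout}$, which need not dominate $\rcP^{\xi'}_{\Fout}$. You avoided this by imposing condition (b) on $\omega'$, which is exactly what made the probability estimate intractable. No matching of petals between $\omega$ and $\omega'$ is needed either, since only $\zeta\ge\xi'$ matters.
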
	

Observe that in the above we do not claim that the law of~$\omega'$ on~$\calF_\tau$ is of the type~$\rcP_{\calF_\tau}^{\zeta}$
for some~$\zeta \geq \xi'$. Indeed, this is not always the case, but it always dominates such a law. 

\begin{lemma}\label{lem:iota2}
Fix~$\calG$  a well-separated outer flower domain on~$\Lambda_{4|y|}$ and write~$\xi < \xi'$ for its compatible boundary conditions.
There exists a coupling~$\bbP$ of~$\rcP_{\calG}^{\xi}$ and~$\rcP_{\calG}^{\xi'}$ and a stopping time~$\tau$ such that, 
when~$\tau < \infty$, 
\begin{itemize}	
	\item~$\calF_\tau = \delta \Z^2 \setminus e_{[\tau]}$ is a well-separated inner flower domain on~$\Lambda_{2|y|}(x)$;
	\item the boundary conditions induced by~$\omega'_{[\tau]}$ on~$\calF_\tau$ are a boost of those induced by~$\omega_{[\tau]}$. 
\end{itemize}
Moreover,~$\bbP[\tau < \infty] \ges \Delta^\delta(|y|,|x|)^2$. 
\end{lemma}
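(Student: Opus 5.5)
We construct the coupling by an exploration that starts from the outer flower domain $\calG$ and moves inward towards $x$. The aim is to produce an inner flower domain on $\Lambda_{2|y|}(x)$ on which the two boundary conditions differ by a boost.

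\textbf{Step 1: set up the scales.} Between $4|y|$ and $|x|/2$ (around $0$), and between $2|y|$ and $|x|/4$ (around $x$), consider the dyadic scales $2^j|y|$. By Lemma~\ref{l.double_flower} and independence across well-separated scales (via \eqref{eq:mixing}), a positive density of scales carry double four-petal flower domains, with high probability.

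\textbf{Step 2: run the decision tree from the outside in.} The tree first explores $\Ann(4|y|,|x|/4)$ from $\partial\calG$ outward. It reveals, in both configurations simultaneously, the interfaces starting from $\partial\calG$, and it stops at the first double four-petal flower domain around $0$ at which the two configurations still differ, in the sense that the boost persists. This is the standard argument of \cite[Section~4]{duminil2022planar}, in which the probability that a boost propagates from scale $r$ to scale $R$ is bounded below by a constant times $\Delta(r,R)$. The lower bound comes from rephrasing $\Delta$ as a difference of crossing probabilities under ordered boundary conditions, combined with the increasing coupling.

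The tree then crosses the region between $0$ and $x$ at scale $|x|$, which costs only a constant by \eqref{eq:RSW}. Finally it explores inward around $x$ down to $\Lambda_{2|y|}(x)$, using the same propagation estimate. The resulting bound is of order $\Delta(4|y|,|x|)\,\Delta(2|y|,|x|)\asymp\Delta^\delta(|y|,|x|)^2$, by \eqref{eq:quasi_delta}.

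\textbf{Step 3: define the stopping time.} We take $\tau$ to be the time at which the inner flower domain on $\Lambda_{2|y|}(x)$ has been fully revealed, on the event that it is well-separated and carries a boosting pair. Only edges outside $\calF_\tau$ are revealed, so the conditional laws on $\calF_\tau$ are random-cluster measures with the induced boundary conditions, as the statement requires.

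\textbf{Main obstacle.} The hardest part is the propagation estimate: at each macroscopic scale, the disagreement must survive with probability comparable to $\Delta$ of that scale, rather than merely being positive. For this I would invoke the converse to Lemma~\ref{lem:iota0} from \cite[Theorem~1.6 and Section~5]{duminil2022planar}, namely that a boost on a flower domain at scale $R$ yields a boost at scale $r$ with probability $\asymp\Delta(r,R)$. I would then chain this estimate across the two halves of the path using the Markov property at the intermediate well-separated flower domain, with a single RSW gluing at the top scale. Lower bounds from double flower domains at the endpoints (Lemma~\ref{l.double_flower}) ensure that the final domain is well-separated.
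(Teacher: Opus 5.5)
Your proposal is correct and is essentially the paper's argument. The paper simply says that \cite[Lemma~5.3]{duminil2022planar} adapts readily, pointing also to \cite[Remark~5.5]{duminil2022planar}, and your scheme is exactly that adaptation: propagate the boost outward from $\calG$ to scale of order $|x|$ at cost $\asymp\Delta^\delta(|y|,|x|)$, glue to an inner flower domain around $x$ at constant cost by \eqref{eq:RSW}, then propagate inward to $\Lambda_{2|y|}(x)$ at a second cost $\asymp\Delta^\delta(|y|,|x|)$, chaining via the domain Markov property at the intermediate well-separated flower domain. Just make sure the first half uses the outward (outer-flower-domain) form of the propagation estimate, and not only the inward statement quoted in your last paragraph.
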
	

\begin{lemma}\label{lem:iota3}
Fix~$\calF$ a well-separated inner flower domain on~$\Lambda_{2|y|}(x)$ and let~$\xi < \xi'$ be a boosting pair of boundary conditions.
Then 
\begin{align}\label{eq:iota3}
	 \rcP_{\calF}^{\xi'}\big[ \omega^{\xi'} \in R(1)\big] -  \rcP_{\calF}^{\xi}\big[\omega^{\xi} \in R(1) \big] \ges \pi_1^\delta(\eps , |y|)^2.
\end{align}
\end{lemma}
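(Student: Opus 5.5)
The plan is to compare the two measures through their increasing coupling and to exhibit an event, of probability $\ges \pi_1^\delta(\eps,|y|)^2$, on which $R(1)$ holds for the upper configuration and fails for the lower one. By monotonicity in the boundary condition, I first reduce to a genuine boosting pair $\xi\le\xi'$: if $(\zeta,\zeta')$ is a boosting pair through $\zeta\le\xi\le\xi'\le\zeta'$, then $\rcP_\calF^{\zeta'}[\omega^{\zeta'}\in R(1)]\ge\rcP_\calF^{\xi'}[\omega^{\xi'}\in R(1)]$ and $\rcP_\calF^{\zeta}[\omega^{\zeta}\in R(1)]\le\rcP_\calF^{\xi}[\omega^{\xi}\in R(1)]$. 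Indeed $R(1)$ is increasing in both the configuration and the wiring. So assume $\xi,\xi'$ are compatible, $\xi\le\xi'$, and two primal petals $P_i,P_j$ are wired in $\xi'$ but not in $\xi$.

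I then couple $\omega\sim\rcP_\calF^\xi$ and $\omega'\sim\rcP_\calF^{\xi'}$ with $\omega\le\omega'$. The target event $G$, defined for $\omega$, has three parts.
\begin{itemize}
\item[(i)] $B_\eps(x)$ is connected by a primal path of $\omega$ to $P_i$, and $B_\eps(x+y)$ to $P_j$.
\item[(ii)] There exist dual paths of $\omega$, starting from dual petals of $\calF$, which separate $\{B_\eps(x)\}\cup P_i$ together with its side of the boundary from $\{B_\eps(x+y)\}\cup P_j$ together with the other side.
\item[(iii)] The primal paths in (i) are the only connections used.
\end{itemize}
On $G$, the separation in (ii) and the fact that $P_i,P_j$ lie in different classes of $\xi$ show that $\omega^\xi\notin R(1)$. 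Also $\omega'\ge\omega$ still contains the two arms of (i), and since $P_i$ and $P_j$ are wired in $\xi'$, we get $\omega'^{\xi'}\in R(1)$. Hence the left-hand side of \eqref{eq:iota3} is at least $\bbP[G]=\rcP_\calF^\xi[G]$. Here one uses that $\{\omega^\xi\in R(1)\}\subset\{\omega'^{\xi'}\in R(1)\}$, since both the configuration and the wiring are monotone.

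Next I bound $\rcP_\calF^\xi[G]$ from below. Well-separation provides petal endpoints at mutual distances $\ges|y|$, and $\calF\supset\Lambda_{2|y|}(x)$ contains both balls at distance $|y|$ from each other. This allows fixed deterministic corridors of width of order $|y|$, chosen up to finitely many combinatorial types of petal arrangements. One family of corridors runs from the dual petals adjacent to $P_i$ and $P_j$ (on both sides of the arc between them) into the region between the two balls. The other joins $\Lambda_{|y|/4}(x)$ to $P_i$ and $\Lambda_{|y|/4}(x+y)$ to $P_j$.

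First I explore the dual crossings in the separating corridors from the outside. By \eqref{eq:RSW}, which is uniform in the boundary conditions, and by the fact that these are a bounded number of crossings of rectangles of bounded aspect ratio, they occur with probability $\ges1$. Conditionally on them, the remaining domain splits into two subdomains, one containing $B_\eps(x)$ and $P_i$ and the other containing $B_\eps(x+y)$ and $P_j$. These carry free-type (dual) boundary conditions on the explored dual paths.

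In each subdomain, the primal arm from $B_\eps(\cdot)$ to $\partial\Lambda_{|y|/4}(\cdot)$ has probability $\ges\pi_1^\delta(\eps,|y|)$, using \eqref{eq:mixing} and RSW to absorb the boundary condition. The arm is then prolonged to the petal along the corridor by RSW together with FKG, since all of these events are increasing within the subdomain. Combining the two subdomains yields $\rcP_\calF^\xi[G]\ges\pi_1^\delta(\eps,|y|)^2$.

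The main obstacle is the geometric step: building the separating dual corridors uniformly over all well-separated petal configurations, including the case where $P_i$ and $P_j$ are not adjacent, so that several dual petals must be linked. A second delicate point is handling the mixed monotonicity, with decreasing dual events and increasing primal ones. I would handle this by exploring the dual paths first, and then invoking the standard arm-separation results of \cite{duminil2022planar,Nol08} to localize the one-arm events near the balls.
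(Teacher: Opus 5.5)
Your strategy matches the paper's. You reduce to a compatible boosting pair, couple increasingly, and look for an event of $\rcP_\calF^\xi$-probability $\ges\pi_1^\delta(\eps,|y|)^2$ on which $\omega^{\xi'}\in R(1)$ but $\omega^{\xi}\notin R(1)$. The gap is in checking that $\omega^\xi\notin R(1)$ on $G$.

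You only use that $P_i$ and $P_j$ lie in different $\xi$-classes. In $\omega^\xi$, however, the balls can also be joined through \emph{other} primal petals via the wiring of $\xi$. Your event splits $\calF$ into two subdomains by a dual separation, which leaves other primal petals inside these subdomains and does nothing to prevent open paths to them. Concretely, take six petals $P_1,\dots,P_6$ with $P_1,P_3,P_5$ primal. Let $\xi$ have classes $\{P_1,P_5\}$ and $\{P_3\}$, let $\xi'$ wire all three, and take $(P_i,P_j)=(P_1,P_3)$. If the separating dual path runs from $P_2$ to $P_6$, then $P_5$ lies in the subdomain of $B_\eps(x+y)$. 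An open path there from the cluster of $B_\eps(x+y)$ to $P_5$ is compatible with (i)--(ii). Then $B_\eps(x)\lra P_1\sim_\xi P_5\lra B_\eps(x+y)$, so $\omega^\xi\in R(1)$ and $G$ certifies nothing.

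With exactly four petals your split is fine, but the lemma concerns arbitrary well-separated flower domains (Figure~\ref{fig:iota3} has six petals), so this case must be handled. Isolating only $P_i$ and $P_j$, by linking just their neighbouring dual petals, is not enough either. With eight petals and $\xi$-classes $\{P_1,P_7\}$, $\{P_3,P_5\}$, an open path from $P_5$ to $P_7$ in the remaining region reconnects the balls through $\xi$. Your condition (iii) is not a well-defined event. If it is meant to exclude such extra connections, you have not bounded its probability.

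The fix is the paper's (see the caption of Figure~\ref{fig:iota3}): require dual paths, in disjoint tubes, linking \emph{all} dual petals of $\calF$, so that each primal petal sits alone in its own region. Then no open path of $\omega$ joins two distinct petals, and the $\omega^\xi$-connections between petals are exactly the $\xi$-classes. So $[P_i]_\xi\neq[P_j]_\xi$ does give $\omega^\xi\notin R(1)$, whatever the wiring of $\xi$. Well-separation bounds the number of petals and keeps their endpoints at distance $\ges|y|$ apart, so these extra dual crossings still cost only a constant by \eqref{eq:RSW}. The rest of your argument then goes through unchanged: explore the dual paths first, then build the two primal arms with FKG and RSW, each costing $\pi_1^\delta(\eps,|y|)$.
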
	
In the above, we write~$\omega^{\xi} \in R(1)$ to mean that the points~$x$ and~$x+y$ are connected using~$\omega$ and potentially the wiring given by the boundary conditions.

\begin{proof}[Proof of~\eqref{e.l.P(LR)-P(LR)=Delta^2_l_bd}]
	Combining Lemmata~\ref{lem:iota2} and~\ref{lem:iota3}, we find that, 
	for any  well-separated outer flower domain~$\calG$ on~$\Lambda_{4|y|}$ with~$\xi < \xi'$ its compatible boundary conditions,
\begin{align}\label{eq:iota4}
	 \rcP_{\calG}^{\xi'}\big[ \omega^{\xi'} \in R(1)\big] - 	 \rcP_{\calG}^{\xi}\big[\omega^{\xi} \in R(1)\big] \ges \pi_1^\delta(\eps , |y|)^2 \, \Delta^\delta(|y|,|x|)^2.
\end{align}

 Now, if~$\bbP$ denotes the coupling of Lemma~\ref{lem:iota1} between~$\rcP$ and~$\rcP[\,\cdot\,|\,L(1)]$, we have 
 \begin{align}
 \rcP[R(1)\,|\, L(1)] -  \rcP[R(1)]  
 & = \bbP\big[\omega' \in R(1) \text{ but } \omega \notin R(1)\big]\\
&  \geq \bbE\Big[ \bbP\big[\omega' \in R(1) \text{ but } \omega \notin R(1)\,\big|\, \omega_{[\tau]},\omega'_{[\tau]} \big] \1_{\tau < \infty} \Big]\\
& \geq \bbE\Big[\big(\rcP_{\calF_\tau}^{\xi'}[ \omega^{\xi'} \in R(1)] - 	 \rcP_{\calF_\tau}^{\xi}[\omega^{\xi} \in R(1) ] \big)\1_{\tau < \infty}\big]\\
& \ges  \pi_1^\delta(\eps , |y|)^2 \, \Delta^\delta(|y|,|x|)^2,
 \end{align}
with the last line due to~\eqref{eq:iota4} and the fact that~$\tau < \infty$ with positive probability. Inserting this into~\eqref{eq:LcapR_lb1} we deduce~\eqref{e.l.P(LR)-P(LR)=Delta^2_l_bd}. 
\end{proof}	

We now turn to the proofs of the three lemmata. 
 \begin{figure}
	 \begin{center}
	 \includegraphics[width = 0.45\textwidth]{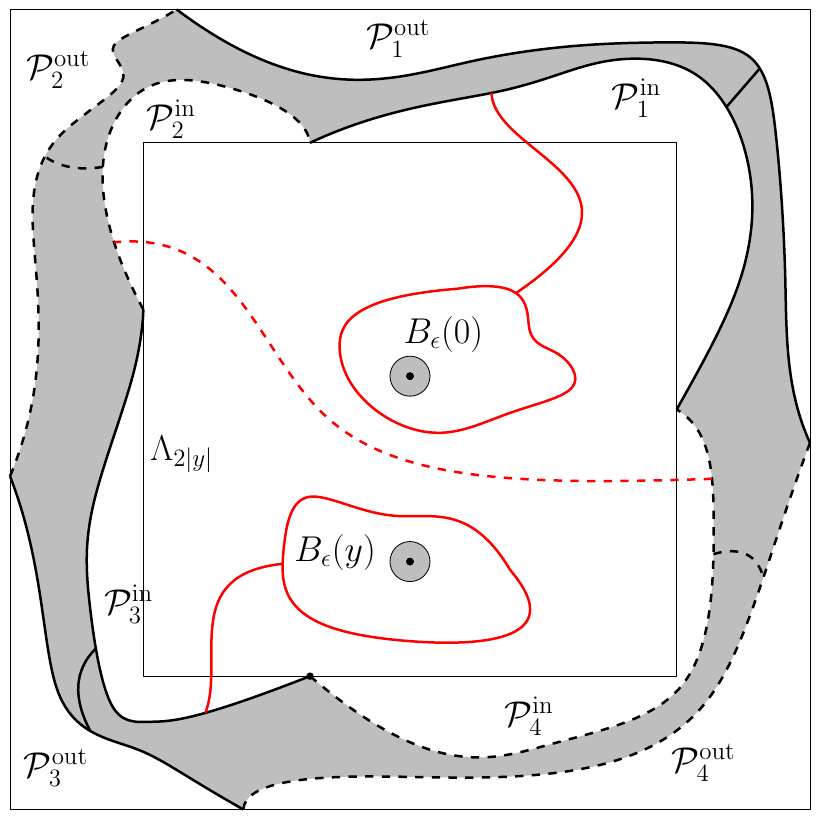}
	 \caption{The double four-petal flower domain~$(\calF_{\rm in}, \calF_{\rm out})$ between~$\Lambda_{2|y|}$ and~$\Lambda_{4|y|}$:
		$\calF_{\rm in}$ is the region surrounded by the shaded part, while~$\calF_{\rm out}$ is the one outside of the shaded part. 
		Note the connections in the shaded region between each pair of petals~$\Pin_j, \Pout_j$.\newline
		The red paths (which are part of~$\omega$) ensure the occurrence of~$H$. 
		As the grey balls~$B_\eps(0)$ and~$B_\eps(y)$ are conditioned to be connected to each other in~$\omega'$  (potentially via~$\calF_{\rm out}$), 
		they are necessarily connected in~$\omega'$ to the red circuits surrounding them, and thus to~$\Pin_1$ and~$\Pin_3$, respectively.}
	 \label{fig:iota2}
	 \end{center}
	 \end{figure}

\begin{proof}[Proof of Lemma~\ref{lem:iota1}]
	Consider the following coupling~$\bbP$ between~$\omega\sim \rcP$ and~$\omega' \sim \rcP[\cdot\,|\, L(1)]$. 
	The decision tree will only take into account the configuration~$\omega$. 
	
	First, reveal the inner flower domain~$\calF_{\rm in}$ on~$\Lambda_{2|y|}$ and the outer flower domain~$\calF_{\rm out}$ on~$\Lambda_{4|y|}$ 
	in~$\omega$. 
	Also, reveal all edges in~$\calF_{\rm in}^c \cap \calF_{\rm out}^c$. 
	We may now determine whether~$(\calF_{\rm in}, \calF_{\rm out})$ form a double four-petal flower domain in~$\omega$. 
	If not, set~$\tau = \infty$. If they do, reveal all edges in~$\calF_{\rm in}$. 
	
	Write~$H$ for the event that, in~$\omega \cap \calF_{\rm in}$,
	\begin{itemize}
		\item~$\Pin_1$ is not connected to~$\Pin_3$;
		\item there exists a circuit of open edges around~$B_\eps(0)$ that is connected to~$\Pin_1$;
		\item there exists a circuit of open edges around~$B_\eps(y)$ that is connected to~$\Pin_3$.
	\end{itemize}
	See Figure~\ref{fig:iota2} for an illustration.
	If~$H$ occurs, let~$\tau$ be the end of this stage of revealment. Otherwise set~$\tau = \infty$. 
	Using~\eqref{eq:RSW}, it is standard to show that 
	\begin{align}
		\bbP[H \,|\,(\calF_{\rm in}, \calF_{\rm out}) \text{ double four-petal flower domain}]\ges 1.
	\end{align}
	Combining this with Lemma~\ref{lem:DFPFD}, we conclude that~$\bbP[\tau < \infty] \ges 1$, as claimed. 
	
	Next we describe the laws of~$\omega$ and~$\omega'$ in the unrevealed region~$\calF_{\rm out}$ when~$\tau< \infty$. 
	Fix a realisation of~$\tau$,~$e_{[\tau]}$,~$\omega_{[\tau]}$ and~$\omega'_{[\tau]}$. 
	Write~$\xi < \xi'$ for the two compatible boundary conditions on~$\calF_{\rm out}$.
	Observe that, due to~$H$, the boundary conditions induced by~$\omega_{[\tau]}$ on~$\calF_{\rm out}$ are~$\xi$. 
	Let~$\zeta$ denote the boundary conditions induced by~$\omega_{[\tau]}'$.
	
	By the occurrence of~$H$ and the nature of~$\omega'$,~$B_\eps(0)$ is connected to~$\Pin_1$ and~$B_\eps(y)$ is connected to~$\Pin_3$ in~$\omega'_{[\tau]}$. 
	It may however be that the two are not connected to each other in~$\omega'_{[\tau]}$. 
	If~$B_\eps(0)$ is connected to~$B_\eps(y)$ in~$\omega'_{[\tau]}$, then so are~$\Pout_1$ and~$\Pout_3$, and thus~$\zeta \geq \xi'$. 
	Moreover, since~$L(1)$ is realised by~$\omega'_{[\tau]}$,~$\omega'$ in~$\calF_{\rm out}$ 
	follows the unconditioned measure~$\phi_{\calF_{\rm out}}^\zeta \geq_{\rm st} \phi_{\calF_{\rm out}}^{\xi'}$, as claimed.  
	
	Consider now the case where~$B_\epsilon(0)$ is not connected to~$B_\epsilon(y)$ in~$\omega'_{[\tau]}$.
	Write~$\calQ^{\rm out}_1$ for the set of vertices on~$\partial \calF_{\rm out}$ connected to~$B_\eps(0)$ in~$\omega'_{[\tau]}$
	and~$\calQ^{\rm out}_3$ for the those connected to~$B_\eps(y)$ in~$\omega'_{[\tau]}$.
	Due to~$H$,~$\Pout_1\subset \calQ^{\rm out}_1$ and~$\Pout_3\subset \calQ^{\rm out}_3$, 
	but since~$B_\epsilon(0)$ and~$B_\epsilon(y)$ are not connected in~$\omega'_{[\tau]}$,~$\calQ^{\rm out}_1$ and~$\calQ^{\rm out}_3$ are disjoint. 
	
	In this case, since~$L(1)$ is not realised by~$\omega'_{[\tau]}$,~$\omega'$ in~$\calF_{\rm out}$
	follows the measure~$\phi_{\calF_{\rm out}}^\zeta[\cdot\,|\, \calQ^{\rm out}_1 \lra \calQ^{\rm out}_3]$.
	Write~$\zeta'$ for the boundary condition on~$\calF_{\rm out}$ obtained from~$\zeta$ by wiring together~$\calQ^{\rm out}_1$ and~$\calQ^{\rm out}_3$. 
	Since the measures below are entirely supported on configurations with~$\calQ^{\rm out}_1$ and~$ \calQ^{\rm out}_3$ connected, we have
	\begin{align}
		\phi_{\calF_{\rm out}}^\zeta[\cdot\,|\, \calQ^{\rm out}_1 \lra \calQ^{\rm out}_3]
		= \phi_{\calF_{\rm out}}^{\zeta'}[\cdot\,|\, \calQ^{\rm out}_1 \lra \calQ^{\rm out}_3]
		\geq_{\rm st} \phi_{\calF_{\rm out}}^{\zeta'},
	\end{align}
	with the stochastic domination due to the FKG inequality. 
	Moreover~$\zeta' \geq \xi'$, as claimed. 
\end{proof}

\begin{proof}[Proof of Lemma~\ref{lem:iota2}]
	A very similar statement was proved in \cite[Lemma~5.3]{duminil2022planar}. 
	The proof adapts readily to our setting. See also \cite[Remark~5.5]{duminil2022planar}. 
\end{proof}

 \begin{figure}
	 \begin{center}
	 \includegraphics[width = 0.4\textwidth]{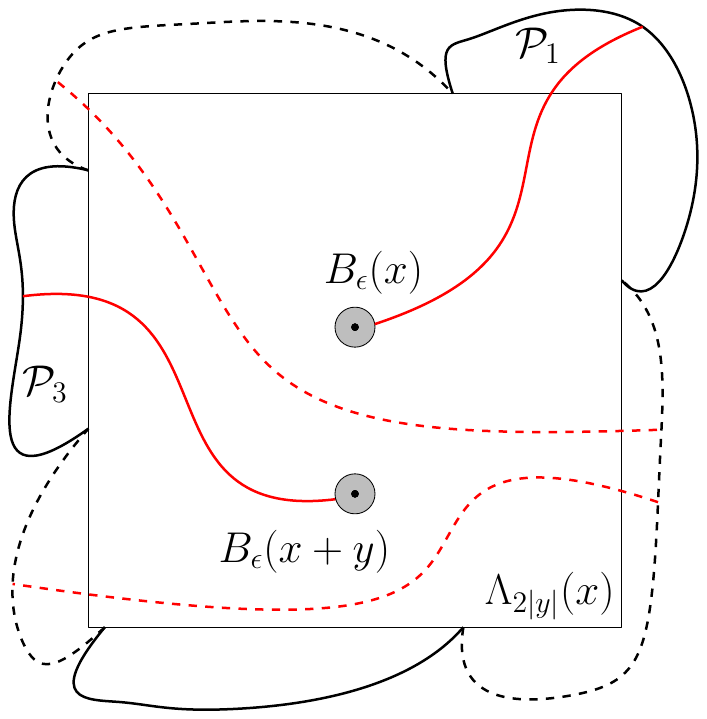}
	 \caption{The flower domain~$\calF$ contains six petals. The red paths (part of~$\omega$) ensure the occurrence of~$H$. 
	 By dually connecting all dual petals, we ensure that~$B_\eps(x)$ and~$B_\eps(x+y)$ are not connected in~$\omega^\xi$.
	 All paths may be constructed in disjoint ``tubes''. By~\eqref{eq:RSW}, the dual paths occur with constant probability, while the primal ones each occur with probability~$\pi_1^\delta(\eps , |y|)$.}
	\label{fig:iota3}
	\end{center}
 \end{figure}

\begin{proof}[Proof of Lemma~\ref{lem:iota3}]
	Fix a flower domain~$\calF$ as in the statement and~$(\xi, \xi')$ a boosting pair of boundary conditions. 
	By monotonicity, it suffices to consider the case where~$\xi$ and~$\xi'$ are compatible with~$\calF$ and differ by the wiring of two petals which we denote~$\Pin_1$ and~$\Pin_3$. 
	
	Consider an arbitrary increasing coupling~$\bbP$ of~$\omega\sim \rcP_{\calG}^{\xi}$  and~$\omega' \sim \rcP_{\calG}^{\xi'}$.
	Let~$H$ be the event that
	~$B_\eps(x)$ is connected to~$\Pin_1$,~$B_\eps(x+y)$ is connected to~$\Pin_3$, but~$B_\eps(x)$ is not connected to~$B_\eps(x+y)$; see Figure~\ref{fig:iota3} for an illustration. 
	 Standard applications of~\eqref{eq:RSW} show that 
	 \begin{align}
	 \bbP[ \omega \in H] = \rcP_{\calF}^{\xi}[H] \ges  \pi_1^\delta(\eps , |y|)^2. 
	 \end{align}
	 
	 Finally, when~$H$ occurs, we have~$\omega^{\xi'} \in R(1)$ but~$\omega^{\xi} \notin R(1)$. Thus, 
	\begin{align}
	 \rcP_{\calF}^{\xi'}\big[ \omega^{\xi'} \in R(1)\big] -  \rcP_{\calF}^{\xi}\big[\omega^{\xi} \in R(1) \big] 
	 &= \bbP\big[(\omega')^{\xi'} \in R(1) \text{ but } \omega^{\xi} \notin R(1)\big]\\
	&\geq \bbP\big[\omega^{\xi'} \in R(1) \text{ but } \omega^{\xi} \notin R(1)\big]
	\ges \pi_1^\delta(\eps , |y|)^2,
	\end{align}
	as claimed. 
\end{proof}

	\small
\bibliographystyle{abbrv}
\bibliography{ref}

\end{document}